\documentclass[11pt, usenames]{article}
\usepackage{geometry}                
\geometry{letterpaper}                   
\usepackage{graphicx}
\usepackage{amssymb}
\usepackage{epstopdf}
\usepackage[T1]{fontenc}
\usepackage[ngerman, english]{babel}
\usepackage{mathtools}
\usepackage{bbm}
\usepackage{mathrsfs}
\usepackage{amsmath,amscd}
\usepackage{tikz}
\usepackage{tikz-cd}
\usepackage{babel}

\usetikzlibrary{arrows, matrix}
\usetikzlibrary{cd}
\usepackage[arrow, matrix, curve]{xy}

\newcommand{\qed}{\hfill \ensuremath{\Box}}
\newenvironment{proof}{\vspace{1ex}\noindent{\it Proof.}\hspace{0.5em}}
	{\hfill\qed\vspace{1ex}}

\DeclareGraphicsRule{.tif}{png}{.png}{`convert #1 `dirname #1`/`basename #1 .tif`.png}
\newtheorem{theorem}{Theorem}[section]
\newtheorem{lemma}[theorem]{Lemma}
\newtheorem{proposition}[theorem]{Proposition}

\newtheorem{definition}[theorem]{Definition}

\DeclareMathOperator{\Gal}{\operatorname{Gal}}
\DeclareMathOperator{\Q}{\mathbf{Q}}
\DeclareMathOperator{\R}{\mathrm{R}}
\DeclareMathOperator{\Z}{\mathbf{Z}}
\DeclareMathOperator{\F}{\mathbf{F}} 

\DeclareMathOperator{\A}{\mathbf{A}}
\DeclareMathOperator{\Ps}{\mathbf{P}}

\DeclareMathOperator{\N}{\mathbf{N}}

\DeclareMathOperator{\im}{\mathrm{im}}
\DeclareMathOperator{\Spec}{\operatorname{Spec}}

\DeclareMathOperator{\Hom}{\operatorname{Hom}}
\DeclareMathOperator{\Frac}{\operatorname{Frac}}

\DeclareMathOperator{\ord}{\operatorname{ord}}

\DeclareMathOperator{\Og}{\mathcal{O}}
\DeclareMathOperator{\pr}{\mathrm{pr}}

\DeclareMathOperator{\Pic}{\mathrm{Pic}}

\DeclareMathOperator{\Gm}{\mathbf{G}_m}

\DeclareMathOperator{\alg}{{^\mathrm{alg}}}
\DeclareMathOperator{\sep}{{^\mathrm{sep}}}
\DeclareMathOperator{\perf}{{^\mathrm{perf}}}
\DeclareMathOperator{\gp}{{^\mathrm{gp}}}
\title{On logarithmic reduction of cohomologically tame\\ elliptic surfaces}
\author{Otto Overkamp and Arne Smeets}
\date{}



\begin{document}
\maketitle
{\abstract{We give cohomological criteria for logarithmic good reduction of elliptic surfaces up to modification. Along the way, we prove several more general results about such surfaces in positive characteristic, as well as about log smooth morphisms.}}
\tableofcontents

\section{Introduction}
Let $\Og_K$ be a complete discrete valuation ring with residue field $\kappa$ and field of fractions $K.$ Throughout, we shall assume that $\mathrm{char}\, \kappa\not\in \{2,3\},$ and that $\kappa$ is algebraically closed. Choose an algebraic closure $K\alg$ of $K$ and let $K\sep$ be the separable closure of $K$ in $K\alg$. In this paper, we shall be interested in studying the reduction of \it elliptic surfaces \rm in the realm of logarithmic geometry. Throughout this paper, we shall use the following
\begin{definition}
Let $k$ be an arbitrary field. An \rm elliptic surface \it over $k$ is a morphism $f\colon X\to C,$ where $X$ is a smooth, projective, and geometrically integral algebraic surface over $k$ and $C$ is a smooth, projective and geometrically integral curve over $k$ with generic point $\eta$ with the following two properties:\\
(i) The generic fibre $X_\eta$ of $f$ is smooth over $\kappa(\eta)$ with $\dim_{\kappa(\eta)} H^1(X_{\eta}, \Og_{X_{\eta}})=1$, and\\
(ii) the morphism $f$ admits a section $\sigma\colon C\to X.$    
\end{definition}
This definition has several immediate consequences. For example, the morphism $f$ is automatically flat, and $X_{\eta}$ is, in fact, an \it elliptic curve \rm over $\kappa(\eta).$ By passing to the limit, one can furthermore show that there exists a non-empty open subset $U\subseteq C$ such that the base change of $f$ to $U$ is smooth. Note that many authors impose additional conditions on the morphism $f$ when defining elliptic surfaces. For example, it is often required that the fibration be \it minimal \rm (see Definition \ref{mindef}), or that $f$ possess at least one fibre which is not smooth over the residue field. We shall not impose any such restrictions; in particular, surfaces of the form $E\times_{\Spec k} C$, where $E$ is an elliptic curve over $k$ and $C$ a curve as in the Definition, will count as elliptic surfaces over $k,$ and our results apply to such surfaces. However, many results (such as our main theorem) will be formulated for minimal elliptic surfaces; this will always be stated explicitly. \\
Now let $f\colon X\to C$ be a minimal elliptic surface over the discretely valued field $K$ and let $\ell$ be a prime number invertible in $\Og_K.$ Then, for $i\geq 0,$ the finitely generated $\Z_\ell$-modules $H^i(X_{K\sep}, \Z_\ell)$ are representations of the absolute Galois group $\Gal(K\sep/K)$ of $K$ in a natural way (all cohomology in this paper will be \'etale cohomology unless stated otherwise). This Galois group contains the subgroup $P,$ which is commonly called the \it wild inertia subgroup, \rm defined as $P:=\Gal(K\sep/K^{\mathrm{t}}),$ where $K^{\mathrm{t}}$ is the maximal tamely ramified subextension of $K\subseteq K\sep.$ If $M$ is an Abelian group on which $\Gal(K\sep/K)$ acts, we shall call the representation $M$ \it tamely ramified \rm if the induced representation of $P$ on $M$ is trivial. Intuitively speaking, it is expected that tamely ramified representations should, when studying the reduction of algebraic varieties over $K$ in the realm of logarithmic geometry, play a similar role to that played by unramified representations in the classical case. The results of the present paper can be seen as confirming this philosophy in the case of elliptic surfaces. Our main result is
\begin{theorem} Let $f\colon X\to C$ be a minimal elliptic surface over $K.$ Suppose that one of \rm Condition 1 \it or \rm Condition 2 \it (see p. 14 and p. 15, respectively) are satisfied.  Let $D$ be the formal sum of closed points $x$ of $C$ such that the fibre of $f$ above $x$ is not smooth. Assume that there exists an effective divisor $A$ on $C$ which is \'etale and tamely ramified over $K$ such that 
$$\deg_K A+\deg_KD+2\dim_KH^1(C, \Og_C)>2,$$ and such that the supports of $D$ and $A$ do not intersect. Then there exists a regular scheme $\mathscr{X}$ together with a flat and projective morphism $f\colon \mathscr{X}\to \Spec \Og_K$ with the following properties: \\
(i) The morphism $\mathscr{X}\to \Spec \Og_K$ is log smooth with respect to the log structure given by the special fibres, and\\
(ii) There exists a projective modification $\mu\colon \mathscr{X}\times_{\Spec\Og_K}\Spec K \to X.$
Furthermore, the morphism $f$ maps the exceptional locus of $\mu$ into the support of $D.$ 
\end{theorem}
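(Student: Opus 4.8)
\emph{Strategy.} The plan is to construct $\mathscr{X}$ fibred over a log smooth model of the base curve. First I would produce a regular, flat, proper $\Og_K$-scheme $\mathscr{C}$ with $\mathscr{C}\times_{\Og_K}K=C$ which is log smooth over $\Spec\Og_K$ for the log structure of its special fibre, and to which $D$ and $A$ extend as disjoint horizontal divisors $\mathscr{D},\mathscr{A}$ lying in the smooth locus of $\mathscr{C}\to\Spec\Og_K$; then I would extend the elliptic fibration over $\mathscr{C}$, resolve singularities, and finally check log smoothness over $\Og_K$ and the modification property.

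\emph{The base.} The supports of $D$ and $A$ are disjoint and both divisors are reduced, so $D+A$ is a reduced divisor on $C$, and $\dim_K H^1(C,\Og_C)$ is the genus of $C$; the hypothesis
$$\deg_K A+\deg_K D+2\dim_K H^1(C,\Og_C)>2$$
is then precisely the statement that $(C,D+A)$ is a stable pointed curve over $K$, the role of $A$ being to force stability when $D$ is too small. By the stable reduction theorem $(C,D+A)$ acquires a stable model over the ring of integers of some finite extension of $K$; Condition 1 (resp.\ Condition 2) is designed to guarantee that the monodromy of the base --- equivalently, tame ramification of the cohomology of $X$, the fibre cohomology being automatically tame since $\mathrm{char}\,\kappa\notin\{2,3\}$ --- is tame, so that this extension may be taken tamely ramified, and $A$ being étale and tamely ramified over $K$ allows the marked points to be handled in the same way. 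Passing to the associated root stack, equivalently allowing prime-to-$\mathrm{char}\,\kappa$ multiplicities in the special fibre, and invoking the results on log smooth morphisms established earlier, one then represents this stable model by a two-dimensional regular flat proper $\Og_K$-scheme $\mathscr{C}$ with the desired properties, whose special fibre $\mathscr{C}_\kappa$ has nodal reduction and is disjoint from $\mathscr{D}$ and $\mathscr{A}$.

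\emph{The elliptic fibration.} The generic fibre $X_\eta$ is an elliptic curve $E$ over $K(C)=K(\mathscr{C})$. I would take its minimal Weierstrass model $\mathscr{W}\to\mathscr{C}$ --- whose base change to $K$ recovers the Weierstrass model of $X\to C$ --- and resolve the singularities of $\mathscr{W}$ over the two-dimensional regular base $\mathscr{C}$, possibly after further blow-ups of $\mathscr{C}$ supported in its special fibre so as to keep $\mathscr{C}_K=C$. Because $\mathrm{char}\,\kappa\notin\{2,3\}$, at every codimension-one point of $\mathscr{C}$ --- those lying over $D$ and those lying on $\mathscr{C}_\kappa$ --- the curve $E$ attains semistable reduction after a tamely ramified extension of degree dividing $12$, so the Kodaira--Néron analysis applies in its relative, positive-characteristic form (as established earlier in the paper), every fibre component that occurs has multiplicity prime to $\mathrm{char}\,\kappa$, and the only fibres that are not already normal crossings, namely those of types $\mathrm{II}$, $\mathrm{III}$, $\mathrm{IV}$, become normal crossings configurations after a further sequence of blow-ups. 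Those performed over points of $\mathscr{C}_\kappa$ are vertical and harmless; those performed over points of $D$ are horizontal and account for the exceptional locus of the modification below (over $A$, where $f$ is smooth, nothing is needed). Call the resulting regular scheme $\mathscr{X}$.

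\emph{Verification, conclusion, and the main obstacle.} It remains to check that $\mathscr{X}\to\Spec\Og_K$ is log smooth for the log structure of its special fibre, i.e.\ that $\mathscr{X}_\kappa$ is a normal crossings divisor in $\mathscr{X}$ locally admitting the chart of a log smooth morphism, the multiplicities and the torsion of the cokernel of the relevant map of monoids being prime to $\mathrm{char}\,\kappa$. This is a local computation: trivial over the smooth locus of $\mathscr{C}_\kappa$ away from $\mathscr{D}$; reducing, étale-locally near a point of $\mathscr{D}\cap\mathscr{C}_\kappa$, to the product of a one-parameter degeneration over $\Og_K$ with a resolved Kodaira fibre; and reducing, near a node of $\mathscr{C}_\kappa$, to the semistable local model $\Og_K[u,v]/(uv-\pi^e)$ of $\mathscr{C}$ carrying a smooth or Tate-type elliptic fibration. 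Granting this, $\mathscr{X}\times_{\Og_K}K$ is a regular model of $E$ over $K(C)$ dominating the relatively minimal model $X$ by a projective birational morphism $\mu$ whose exceptional locus, built only from the horizontal blow-ups over $D$, maps into the support of $D$; this gives (ii), while (i) has been arranged. I expect the main obstacle to be exactly this local analysis together with the tameness input: showing that the resolved Weierstrass model over the log smooth base $\mathscr{C}$ is log smooth over $\Og_K$ itself --- not merely over $\mathscr{C}$ --- forces one to interleave the Kodaira--Néron classification of the horizontal fibres over $D$ with the vertical semistable degeneration of $\mathscr{C}$ and with the prime-to-$\{2,3\}$ multiplicity count, and it is here, as well as in descending the base model to $\Og_K$ rather than to a ramified extension, that Conditions 1 and 2 are indispensable.
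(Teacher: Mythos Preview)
Your route is genuinely different from the paper's and carries a real gap together with a heavy burden you correctly flag as the ``main obstacle.'' The paper does \emph{not} construct $\mathscr{C}$ via stable reduction plus root stacks, nor does it resolve a Weierstrass threefold by hand. Instead it invokes Saito's log smooth extension theorem for families of curves \emph{twice}: first to the pair $(C, A+D)$ over $\Spec\Og_K$ to obtain $\mathscr{C}$, and then to the genus-one family $X_{U_\mathscr{C}}\to U_\mathscr{C}$ (marked by the section $\sigma$) over the log base $\mathscr{C}$ to obtain $\mathscr{X}\to\mathscr{C}$. The cohomological hypotheses of Saito's theorem are supplied by (i) tameness of $H^1(C_{K\alg},\Z/\ell^n\Z)$ and of the marked divisor $A+D$ for the first application, and (ii) Lemma~\ref{3torslem} (tameness of $E[3]$ whenever $\mathrm{char}\,\kappa\nmid 6$) together with the section $\sigma$ for the second. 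The remaining work is Proposition~\ref{horizontalprop}, which lets one strip the horizontal part of the log structure so that only the special-fibre log structure survives. No threefold resolution and no local chart computation near $\mathscr{D}\cap\mathscr{C}_\kappa$ is ever performed: Saito's theorem delivers regularity and log smoothness of $\mathscr{X}\to\mathscr{C}$ simultaneously.

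Your outline has a concrete gap: you never justify that the points of $D$ are defined over tamely ramified extensions of $K$. You argue tameness for $A$ (which is hypothesised) and for the monodromy of $C$, but tameness of $H^1(C_{K\alg},\Q_\ell)$ does not by itself force the residue fields $\kappa(x)$ for $x\in D$ to be tame over $K$; this is the content of Theorem~\ref{Newtamethm}, which occupies most of Section~3 and is where Conditions~1 and~2 are actually used in earnest (via the structure of $\R^1\overline{f}'_\ast$ and the Leray spectral sequence). Without it, your stable model of $(C,D+A)$ need not descend to a tame extension, and the whole construction stalls at the base.

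Beyond that gap, your strategy asks you to resolve a threefold $\mathscr{W}$ in mixed or positive characteristic and then carry out precisely the local log-smoothness verification at points where a degenerate Kodaira fibre sits over a node of $\mathscr{C}_\kappa$ --- exactly the analysis that Saito's theorem packages. So even if the tameness of $D$ is supplied, what you call the main obstacle is not a detail but essentially a reproof of \cite[Theorem~1]{Saito} in this special case; the paper's two-step appeal to that theorem, bridged by Proposition~\ref{horizontalprop} and fed by Theorem~\ref{Newtamethm} and Lemma~\ref{3torslem}, is what makes the argument short.
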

See Theorem \ref{mainthm} for a more precise statement. Condition 1 is satisfied, for example, if all singular fibres of $f$ have at least two geometric irreducible components and the Galois representations $H^i(X_{K\sep}, \Q_\ell)$ are tamely ramified. The numerical condition involving $D,$ $A,$ and $H^1(C, \Og_C)$ is only of interest if the genus of $C$ is equal to 0 or 1, as the condition is always satisfied otherwise even if $D=A=0.$ If the genus is equal to 1, the condition excludes base curves which do not have points in tamely ramified extensions of $K;$ for such curves, the question of logarithmic good reduction is much more delicate than otherwise. If the genus of $C$ is zero (which implies $C\cong \Ps^1_K$ in our situation), the condition is always satisfied as well. However, if $\deg D\in \{1,2\},$ one would have to choose $A$ inside the open subset of $C$ above which $f$ is smooth (see Theorem \ref{smooththm} for the case $\deg D=0$). The relationship of cohomological tameness (i.e., tame ramification of the Galois representations $H^i(X_{K\sep}, \Z_\ell)$) and logarithmic good reduction has been investigated before in various situations; if $X$ is a curve, see \cite{Stix}; if $X$ is an Abelian variety, see \cite{BS}. \\
\\
$\mathbf{Acknowledgement}.$ This paper was written while the first author was a postdoctoral researcher at Leibniz Universität Hannover and revised while he was working at the Mathematical Institute of the University of Oxford, supported by a research fellowship from the  German Research Foundation (Deutsche Forschungsgemeinschaft; Geschäftszeichen OV 163/1-1, Projektnummer 442615504). He would like to express his gratitude to all three of those institutions for their support. The second author acknowledges support of an NWO Veni grant (639.031.756) and a KU Leuven start-up grant.

\section{Elliptic surfaces over separably closed fields}

Let $k$ be a separably closed field of characteristic $p\not\in \{2,3\},$ and denote by $k{\alg}$ an algebraic closure of $k.$ Suppose
$f\colon X\to C$
is an elliptic surface over $k$. By assumption, $f$ is flat and smooth away from finitely many points on $C$. Assume that the fibration $X\to C$ admits a section $\sigma\colon C\to X.$
Let $x\in C$ be a closed point. Since $C_{k\alg}:=C\times_{\Spec k}\Spec k\alg\to C$ is a homeomorphism, there is a unique closed point $\overline{x}$ of $C_{k\alg}$ lying above $x.$ The corresponding local rings $\Og_{C,x}$ and $\Og_{C_{k\alg}, \overline{x}}$ are excellent discrete valuation rings satisfying $$\Og_{C,x}\otimes_kk\alg=\Og_{C_{k\alg}, \overline{x}}.$$ 
\begin{lemma}
Let $\kappa(x)$ denote the the residue field of $x$. The local extension of discrete valuation rings $\Og_{C,x}\subseteq \Og_{C_{k\alg},\overline{x}}$ has ramification index equal to $[\kappa(x):k]$, i.e., the normalised valuation of a uniformiser of $\Og_{C,x}$ in $\Og_{C_{k\alg},\overline{x}}$ is equal to $[\kappa(x):k]$. \label{Oramlem}
\end{lemma}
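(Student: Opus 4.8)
The plan is to turn the statement into a length computation. Write $A=\Og_{C,x}$ and $B=\Og_{C_{k\alg},\overline{x}}$, let $\pi\in A$ be a uniformiser, and recall that for a nonzero element of a discrete valuation ring the normalised valuation equals the length of the quotient ring by the ideal it generates. Since $B$ is faithfully flat over $A$ — it is the base change $A\otimes_kk\alg$, by the identity recorded just before the lemma — the image of $\pi$ in $B$ is nonzero, so it suffices to prove $\mathrm{length}_B(B/\pi B)=[\kappa(x):k]$.

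The first step is to note that $\kappa(x)/k$ is purely inseparable: it is a finite extension of the separably closed field $k$, so its maximal separable subextension is $k$ itself. (This is the only place where separable closedness is used; in characteristic $0$ one has $k\alg=k$ and $\kappa(x)=k$, so the lemma is trivial, and we may assume $p>0$.) Next, using $B=A\otimes_kk\alg$ and right-exactness of $-\otimes_kk\alg$, I obtain
$$B/\pi B \;=\; B/\mathfrak{m}_AB \;=\; (A/\mathfrak{m}_A)\otimes_kk\alg \;=\; \kappa(x)\otimes_kk\alg \;=:\;R.$$
The key observation is that $R$ is an Artinian local $k\alg$-algebra with residue field $k\alg$. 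Indeed, the (unique, by pure inseparability) $k$-embedding $\kappa(x)\hookrightarrow k\alg$ induces a surjection $R\twoheadrightarrow k\alg$ whose kernel is generated by the elements $a\otimes 1-1\otimes a$ with $a\in\kappa(x)$; choosing $m$ with $\kappa(x)^{p^m}\subseteq k$, each such element is annihilated by the $p^m$-th power map, so the kernel is nilpotent.

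Finally I would assemble the pieces. Because $R$ is Artinian local with residue field $k\alg$, every composition factor of $R$ as a module over itself is isomorphic to $k\alg$, so $\mathrm{length}_R(R)=\dim_{k\alg}R$; and since $B\twoheadrightarrow R$, the length of $R$ over $B$ agrees with its length over $R$. Combining this with the standard fact $\dim_{k\alg}(\kappa(x)\otimes_kk\alg)=[\kappa(x):k]$ yields $\mathrm{length}_B(B/\pi B)=[\kappa(x):k]$, and hence the asserted equality of the ramification index with $[\kappa(x):k]$. The only point that requires a little care is that $k\alg/k$ may be an infinite extension, which is precisely why I avoid comparing degrees of function fields and instead route the whole argument through the Artinian quotient $B/\pi B$.
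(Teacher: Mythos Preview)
Your argument is correct, and it is a genuinely different proof from the one in the paper. The paper factors the extension through the intermediate discrete valuation ring $\Og_{C_{\kappa(x)},y}$, where $y$ is the unique point of $C_{\kappa(x)}$ above $x$; it then argues that $\Og_{C,x}\subseteq \Og_{C_{\kappa(x)},y}$ is finite free of rank $[\kappa(x):k]$ with trivial residue extension (hence totally ramified), and separately that a uniformiser at $y$ stays a uniformiser after further base change to $k\alg$. Your approach bypasses this intermediate scheme entirely: you compute $\mathrm{length}_B(B/\pi B)$ directly by identifying $B/\pi B$ with $\kappa(x)\otimes_k k\alg$ and exploiting that this is Artinian local with residue field $k\alg$, so its length equals its $k\alg$-dimension $[\kappa(x):k]$. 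Both proofs ultimately rest on the pure inseparability of $\kappa(x)/k$; yours is somewhat more self-contained and avoids the mild appeal to the degree formula $ef=n$ for finite free extensions of DVRs that is implicit in the paper's first step, at the cost of a short Artinian-ring digression. Either route is perfectly adequate here.
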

\begin{proof}
Let $y\in C_{\kappa(x)}:=C\times_{\Spec k}\Spec \kappa(x)$ be the unique closed point of $C_{\kappa(x)}$ lying above $x.$ Then we can factor the extension of discrete valuation rings as
$\Og_{C,x}\subseteq \Og_{C_{\kappa(x)},y}\subseteq \Og_{C_{k\alg}, \overline{x}}.$ We claim that the index of ramification of the first extension is $[\kappa(x):k],$ whereas the index of the second one is $1$. To prove the first claim, note that we have 
$$\Og_{C_{\kappa(x)},y}=\Og_{C,x}\otimes_k\kappa(x).$$
In particular, the first extension is finite, of degree $[\kappa(x):k].$ On the other hand, the residue field extension of $\Og_{C,x}\subseteq \Og_{C_{\kappa(x)},y}$ is trivial. Hence we obtain the first claim. For the second one, let $\xi$ be a function defined in a Zariski neighbourhood of $X$ which cuts out $y$ scheme-theoretically. Then the image of $\xi$ in $\Og_{C_{\kappa(x)},y}$ is a uniformiser. The subscheme cut out by the pullback of $\xi$ to $C_{k\alg}$ is isomorphic to $\Spec\kappa(x)\times_{\Spec \kappa(x)}\Spec k\alg,$ which is reduced. Hence this pullback of $\xi$ is a uniformiser of $\Og_{C_{k\alg},\overline{x}}$ and the claim follows. 
\end{proof}\\
\subsection{Minimal proper regular models}
The goal of this subsection is to show that the elliptic surface $\overline{f}\colon X_{k\alg}\to C_{k\alg}$ is minimal as well. Let us first recall the following definition (adapted from \cite[Definition 3.8]{Con}): 
\begin{definition}
Let $S$ be a Dedekind scheme with generic point $\eta$, and let $\mathscr{E}\to S$ be a flat, proper morphism such that $\mathscr{E}$ is regular and such that the generic fibre $\mathscr{E}_{\eta}$ is an elliptic curve. Then $\mathscr{E}$ is a minimal proper regular model of $\mathscr{E}_{\eta}$ if and only if the following holds: if $\mathscr{E}\to \mathscr{E'}$ is a morphism from $\mathscr{E}$ to another regular proper model $\mathscr{E}'$ of $\mathscr{E}_{\eta}$ which becomes an isomorphism on the generic fibre, then $\mathscr{E}\to \mathscr{E'}$ is a global isomorphism. \label{mindef}
\end{definition}
If $E$ is any elliptic curve over the field of fractions of $S$, then there exists a \it unique \rm minimal proper regular model of $E$ over $S$ (see, e.g., \cite[Theorem 1.2]{Chi}). The minimal proper regular model $\mathscr{E}$ of a given elliptic curve $E$ satisfies the following universal property: For every proper regular model $\mathscr{E}''$ of $E$ over $R$, there exists a unique morphism $\mathscr{E}''\to \mathscr{E}$ respecting the identification of the generic fibres (see \cite[Remark 3.11]{Con}).

Let $\mathscr{E}\to S$ be a proper, flat morphism $\mathscr{E}\to S$, with $\mathscr{E}$ regular, with generic fibre an elliptic curve $E$. Then we have the following criterion which tells us whether $\mathscr{E}$ is minimal: 

\begin{proposition} \rm (Castelnuovo's criterion) \it
Let $S$ be a Dedekind scheme with generic point $\eta,$ and let $s$ be a closed point of $S.$ Let $\mathscr{E}\to S$ be a proper and flat morphism such that $\mathscr{E}$ is regular and such that $\mathscr{E}_\eta$ is an elliptic curve over $\Spec \kappa(\eta).$ Then the following are equivalent: \\
(i) the scheme $\mathscr{E}$ is a minimal proper regular model of its generic fibre, and\\
(ii) for any closed point $s\in S,$ the scheme $\mathscr{E}\times_S \Spec \Og_{S,s}$ is a minimal proper regular model of its generic fibre, and \\
(iii) for any closed point $s\in S,$ there does not exist an irreducible component $Z$ of $\mathscr{E}_s$ (endowed with its reduced closed subscheme structure) with the following property: Let $k_Z:=\Gamma(Z, \Og_Z);$ this is a finite field extension of $k.$ Denote by $\iota\colon Z\to \mathscr{E}$ the canonical closed immersion. Then 
$$\deg_{k_Z} \iota^\ast \Og_{\mathscr{E}}(Z)=-1$$ and $Z\cong \Ps^1_{k_Z}.$ \label{Castelnuovoprop}
\end{proposition}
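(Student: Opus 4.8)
The plan is to reduce the three-way equivalence to two classical facts about regular, two-dimensional schemes proper and flat over a Dedekind base (i.e.\ regular proper models in the sense of Definition~\ref{mindef}). The first is \emph{Castelnuovo's contractibility criterion}: if $s\in S$ is closed and $Z$ is an irreducible component of $\mathscr{E}_s$, taken with its reduced structure and with $k_Z=\Gamma(Z,\Og_Z)$, satisfying $Z\cong\Ps^1_{k_Z}$ and $\deg_{k_Z}\iota^\ast\Og_{\mathscr{E}}(Z)=-1$, then $Z$ can be contracted: there is a proper $S$-morphism $c\colon\mathscr{E}\to\mathscr{E}'$ with $\mathscr{E}'$ regular, inducing an isomorphism away from $Z$ and contracting $Z$ to a regular closed point. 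The second is the \emph{factorisation of proper birational morphisms}: a proper birational morphism $\mathscr{E}\to\mathscr{E}'$ of regular proper models that is not an isomorphism factors through a nonempty sequence of blow-ups at closed points, and the exceptional divisor of the blow-up of a regular closed point $p$ on a regular surface is a copy of $\Ps^1_{\kappa(p)}$ whose associated line bundle has degree $-1$ over $\kappa(p)$; hence the source of such a non-isomorphism contains an irreducible component of a closed fibre of exactly the kind excluded in (iii). I would quote both facts from the theory of regular arithmetic surfaces, and I expect that the only genuinely delicate points sit there — the existence of the contraction \emph{as a scheme} and its regularity, and matching the intersection number over the possibly non-closed, imperfect field $k_Z$ with the classical self-intersection — the rest being formal.

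Granting these, I would prove (i) $\Leftrightarrow$ (iii) first. For (i) $\Rightarrow$ (iii), argue contrapositively: if some $\mathscr{E}_s$ has a component $Z$ as in (iii), contract it. The resulting $c\colon\mathscr{E}\to\mathscr{E}'$ is an isomorphism on generic fibres (as $Z$ is fibral) but not a global isomorphism, and $\mathscr{E}'$ is again a regular proper model of $\mathscr{E}_\eta$: it is proper over $S$ since $c$ is proper and surjective, flat over $S$ since it is integral and dominates the Dedekind scheme $S$, and its generic fibre is $\mathscr{E}_\eta$ since $c$ is an isomorphism there. So $\mathscr{E}$ violates Definition~\ref{mindef}. (If Castelnuovo's criterion is only available over a discrete valuation ring, first contract $Z$ over $\Og_{S,s}$, then spread the contraction out to a Zariski neighbourhood $U\ni s$ on which it is an isomorphism away from the fibre over $s$, and glue with $\mathscr{E}|_{S\setminus\{s\}}$ along $U\setminus\{s\}$; this again produces a regular proper $S$-model to which $\mathscr{E}$ maps non-isomorphically.) For (iii) $\Rightarrow$ (i), let $\phi\colon\mathscr{E}\to\mathscr{E}'$ be a morphism to another regular proper model which is an isomorphism on generic fibres. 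Then $\phi$ is proper and birational, and — being an isomorphism after base change to the generic point of $S$ and of finite presentation (as $S$ is Noetherian) — an isomorphism over an open neighbourhood of $\eta$, hence over all but finitely many closed points of $S$. Localising $S$ at one of the remaining points and applying the factorisation fact, we see that if $\phi$ were not a global isomorphism, then $\mathscr{E}$ would contain an irreducible component of a closed fibre of the type forbidden by (iii); this contradiction shows $\phi$ is a global isomorphism, i.e.\ (i).

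Finally, (ii) $\Leftrightarrow$ (iii) follows by localisation. For a closed point $s$ put $R=\Og_{S,s}$; then $\mathscr{E}\times_S\Spec R$ is a regular proper model with closed fibre $\mathscr{E}_s$ and generic fibre $\mathscr{E}_\eta$. As $\mathscr{E}\times_S\Spec R\to\mathscr{E}$ is the cofiltered limit of the open immersions $\mathscr{E}\times_S U\hookrightarrow\mathscr{E}$ over Zariski neighbourhoods $U\ni s$, and $Z$ lies over $s$, both the scheme $Z$ and the line bundle $\iota^\ast\Og_{\mathscr{E}}(Z)$ are already defined over such a $U$; hence the two conditions on $Z$ in (iii) are unchanged if $\mathscr{E}$ is replaced by $\mathscr{E}\times_S\Spec R$. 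Thus (iii) for $\mathscr{E}/S$ holds precisely when (iii) holds for $\mathscr{E}\times_S\Spec R$ over $R$ for every closed $s$, which by the equivalence (i) $\Leftrightarrow$ (iii) over the base $\Spec R$ is precisely the statement that each $\mathscr{E}\times_S\Spec R$ is a minimal proper regular model of its generic fibre — that is, (ii); in particular this also yields (i) $\Leftrightarrow$ (ii). The only verification beyond the two cited inputs that I expect to require real attention is the one used repeatedly above: that contraction of a curve and localisation of the base both leave flatness over $S$, the generic fibre, the reduced scheme structure on $Z$, and its self-intersection number intact.
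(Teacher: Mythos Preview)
Your proposal is correct and takes essentially the same approach as the paper: the paper's proof is a one-line citation of Chinburg's Theorems 1.2, 2.1, and 3.1, which are precisely the contractibility criterion and the factorisation theorem you invoke. You have simply spelled out how these results assemble into the three-way equivalence, which the paper leaves to the reader.
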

\begin{proof}
This is an immediate consequence of \cite{Chi}, Theorems 1.2, 2.1, and  3.1.
\end{proof}\\
Assume that $f\colon X\to C$ is a minimal elliptic surface over $k.$ We will now prove that the irreducible components of a singular fibre are projective lines over the appropriate residue fields, provided that the fibre has at least two irreducible components. 

\begin{lemma}
Let $f\colon X\to C$ be a minimal elliptic surface over $k.$ Let $x\in C$ be a closed point such that the fibre above $x$ has at least two irreducible components. Then each component is isomorphic to $\Ps^1_{\kappa(x)},$ and $\kappa(x)=k.$ \label{P1lem}
\end{lemma}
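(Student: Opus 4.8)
The plan is to localise at $x$, first pin down the residue field $\kappa(x)$, and then identify the singular fibre via the classification of generalised elliptic curves. Put $R := \Og_{C,x}$ and $\mathscr{E} := X\times_C\Spec R$; this is a proper flat model over the discrete valuation ring $R$ of the elliptic curve $X_\eta$, it is regular (being a localisation of the smooth $k$-surface $X$), and its special fibre is $X_x$. Since $f_\ast\Og_X = \Og_C$, the fibres of $f$ are connected, so $X_x$ is connected with at least two components, hence singular at some closed point $P$; equivalently, writing $\pi$ for a uniformiser of $R$, one has $\pi\in\mathfrak{m}_P^2$ inside the two-dimensional regular local ring $\Og_{\mathscr{E},P}$.

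The crucial step is to show $\kappa(x) = k$. For this I would base change $\mathscr{E}$ along the extension of discrete valuation rings $R\subseteq R' := \Og_{C_{k\alg},\overline x}$, where $\overline x$ is the point of $C_{k\alg}$ above $x$. On the one hand, $\mathscr{E}\times_R R' = X_{k\alg}\times_{C_{k\alg}}\Spec R'$ is a localisation of $X_{k\alg}$, which is smooth over $k\alg$, hence regular. On the other hand, by Lemma \ref{Oramlem} the extension $R\subseteq R'$ has ramification index $d := [\kappa(x):k]$, so after completion $\widehat R\cong\kappa(x)[[\pi]]$ and $\widehat{R'}\cong k\alg[[\pi']]$, with $\pi$ going to a unit times $\pi'^d$. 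Choosing a closed point $\overline P$ of $X_{k\alg}$ above $P$ and using $\widehat{\Og_{\mathscr{E},P}}\cong\kappa(P)[[s,t]]$ with the image of $\pi$ lying in $(s,t)^2$, one finds that the completed local ring of $\mathscr{E}\times_R R'$ at $\overline P$ has a maximal ideal requiring at least the three generators $s,t,\pi'$, cut out by a single relation which lies in the square of the maximal ideal as soon as $d\geq 2$. Such a ring is not regular, contradicting the previous paragraph; hence $d = 1$ and $\kappa(x) = k$.

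It remains to identify the components of $X_x$. Because $f$ is minimal and admits a section, it has no multiple fibres --- a section of the regular model $\mathscr{E}$ meets $X_x$ in a smooth $k$-rational point, so some component has multiplicity one --- and the canonical bundle formula gives $\omega_X\cong f^\ast\mathscr{L}$ for a line bundle $\mathscr{L}$ on $C$; restricting to the Cartier divisor $X_x\subseteq X$ yields $\omega_{X_x}\cong\Og_{X_x}$. Together with $\dim_k H^0(X_x,\Og_{X_x}) = \dim_k H^1(X_x,\Og_{X_x}) = 1$ (from flatness of $f$ and the definition of an elliptic surface) and the smooth rational point just exhibited, this presents $X_x$ as the underlying curve of a generalised elliptic curve over $k$ which is reducible. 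By the Deligne--Rapoport classification such a curve is a twisted N\'eron polygon, and since $k = \kappa(x)$ is separably closed the twist is trivial: $X_x$ is the standard N\'eron $n$-gon over $k$ for some $n\geq 2$, each component of which is isomorphic to $\Ps^1_k = \Ps^1_{\kappa(x)}$.

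I expect the middle step to be the real obstacle: the claim $\kappa(x) = k$ is false for closed points of arbitrary smooth curves over separably closed imperfect fields, so one genuinely has to use the fibration --- concretely the interplay between regularity of the total space, the presence of a reducible (hence singular) fibre, and the ramification computation of Lemma \ref{Oramlem}. Once $\kappa(x) = k$ and the canonical bundle formula are in hand, the identification of the components is routine via the theory of generalised elliptic curves.
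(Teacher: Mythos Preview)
Both main steps of your argument contain genuine gaps.

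For the middle step, your presentation of the completed local ring of $\mathscr{E}\times_R R'$ at $\overline{P}$ as $k\alg[[s,t,\pi']]$ modulo a single relation is not correct. Such a description would be valid if $R'$ were obtained from $R$ by adjoining a $d$-th root of $\pi$, but here $R' = R\otimes_k k\alg$: the ramification of Lemma~\ref{Oramlem} comes entirely from the purely inseparable residue-field extension $\kappa(x)\hookrightarrow k\alg$, not from an added generator subject to a relation. Concretely, $\Og_{\mathscr{E}\times_R R',\overline{P}} = \Og_{X,P}\otimes_k k\alg$ is a local ring of the smooth $k\alg$-surface $X_{k\alg}$ and is therefore \emph{always} regular of dimension two, whatever $d$ is; its maximal ideal is generated by two elements, and the images of your $s,t$ need not be part of a minimal generating set (they may land deep in the maximal ideal once one accounts for the nilpotents in $\kappa(P)\otimes_k k\alg$). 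There is thus no contradiction to extract from regularity alone. The paper instead argues component by component: for a reduced irreducible component $Z$ with $k_Z:=\Gamma(Z,\Og_Z)$, comparing $\deg_{k_Z}\iota^\ast\Og_X(Z)$ with its pullback to $X_{k_Z}$ shows that $[k_Z:k]$ divides this degree, while adjunction together with $Z^2<0$ forces the degree to equal $-2$; since $[k_Z:k]$ is a power of $p>3$, one concludes $k_Z=k$ and hence $\kappa(x)=k$.

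Your final step also breaks down. The Deligne--Rapoport classification applies only to \emph{reduced} curves with at worst ordinary double points, whereas minimal elliptic fibres of additive type (already $\mathrm{III}$ and $\mathrm{IV}$, and all of $\mathrm{I}_n^\ast$, $\mathrm{II}^\ast$, $\mathrm{III}^\ast$, $\mathrm{IV}^\ast$) are either non-reduced or have worse-than-nodal singularities, so $X_x$ is in general not a N\'eron polygon. The conditions $\omega_{X_x}\cong\Og_{X_x}$ and $h^0=h^1=1$ hold for every Kodaira fibre type and do not imply semistability. The paper avoids this by treating each reduced component $Z$ individually: once $k_Z=k$ and $H^1(Z,\Og_Z)=0$ are known from the adjunction argument above, a direct characterisation of $\Ps^1_k$ (using that $Z$ carries a line bundle of degree prime to $p$) finishes the proof.
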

\begin{proof} We begin by showing that $\Ps^1_{k}$ can be characterised as follows: Let $Z$ be a proper integral curve over $k$ such that $\Gamma(Z, \Og_Z)=k$ and $H^1(Z, \Og_Z)=0$, and assume that $Z$ admits a line bundle of $k$-degree not divisible by $p=\mathrm{char}\, k.$ Then $Z\cong \Ps^1_{k}.$ 

Indeed, we see straight away that $Z$ is regular: If it were not, then then its normalisation would be isomorphic to $\Ps^1_{k'}$ for some non-trivial finite extension $k'$ of $k$ \cite[Tag 0DJB]{Stacks}. In particular, all line bundles on $Z$ would have degree divisible by $[k':k]$; since $k$ is separably closed, this is impossible. Therefore $Z$ is regular, and $Z$ is isomorphic to a plane projective curve of degree $2$ over $k$ (see \cite[Tag 0C6N]{Stacks}). Moreover $Z$ has a $k$-rational point since $\mathrm{char}\ k >2 $, which implies that $Z\cong \Ps^1_{k}$, as desired. 

Let us now assume that $Z$ is an irreducible component of the fibre of $f$ above $x$, endowed with its reduced subscheme structure. To see that $\Gamma(Z, \Og_Z)=\kappa(x)=k$, let $k_Z:=\Gamma(Z, \Og_Z)$, which is a finite extension of $\kappa(x)$, and consider the commutative diagram
$$\begin{tikzcd}
&X_{k_Z} \arrow{d}{\pi}\\
Z\arrow[swap]{r}{\iota}\arrow{ru}{\iota_Z}&X.
\end{tikzcd}$$
Note that $X_{k_Z}$ is smooth over $k_Z.$ The same argument as in the proof of Lemma \ref{Oramlem} shows that $\pi^\ast \Og_{X}(Z)=\Og_{X_{k_Z}}(Z)^{\otimes{[k_Z:k]}},$ which implies
\begin{align*}
[k_Z:k]\deg_{k_Z}\iota_Z^\ast \Og_{X_{k_Z}}(Z)=\deg_{k_Z} \iota^\ast \Og_X(Z).
\end{align*}
It follows from the adjunction formula 
$$ \deg_{k_Z} \iota^\ast \Og_{X}(Z)=-2+ 2\dim_{k_Z} H^1(Z, \Og_Z)$$ (see \cite[pp. 20f]{Con}; here we use minimality), together with the fact that $\deg_{k_Z} \iota^\ast \Og_{X}(Z) < 0$, that $\dim_{k_Z} H^1(Z, \Og_Z)=0$ and $\deg_{k_Z} \iota^\ast \Og_{X}(Z)=-2.$ We conclude that $k_Z=k$ and therefore also $k_Z=\kappa(x)$, which finishes the proof.
\end{proof}\\
Like the previous results in the section, the following statement is probably well-known to experts, but we were not able to find a reference; the proof is not complicated.
\begin{proposition}
Let $f\colon X\to C$ be a minimal elliptic surface over $k$. Then the induced fibration $\overline{f}\colon X_{k\alg}\to C_{k\alg}$ is minimal as well. \label{generalminprop}
\end{proposition}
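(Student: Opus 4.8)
The plan is to argue by contradiction, applying the pointwise form of Castelnuovo's criterion (Proposition \ref{Castelnuovoprop}, condition (iii)) to both $f$ and $\overline{f}$. First I would check that $\overline{f}$ satisfies the hypotheses of Proposition \ref{Castelnuovoprop}: the morphism $\overline{f}\colon X_{k\alg}\to C_{k\alg}$ is flat and proper (being a base change of such), $X_{k\alg}$ is regular because $X_{k\alg}\to \Spec k\alg$ is smooth (a base change of the smooth morphism $X\to \Spec k$), and the generic fibre of $\overline{f}$ is the base change of $X_\eta$ along a field extension, hence an elliptic curve (it inherits a rational point from $\sigma$). I would then suppose, for contradiction, that $\overline{f}$ is \emph{not} minimal. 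By Proposition \ref{Castelnuovoprop}(iii) there is a closed point $\overline{x}\in C_{k\alg}$ and an irreducible component $\overline{Z}$ of the fibre $X_{\overline{x}}$, taken with its reduced structure, such that $\overline{Z}\cong\Ps^1_{k\alg}$ and $\deg_{k\alg}\overline{\iota}^\ast\Og_{X_{k\alg}}(\overline{Z})=-1$, where $\overline{\iota}\colon\overline{Z}\hookrightarrow X_{k\alg}$ is the inclusion; here $\Gamma(\overline{Z},\Og_{\overline{Z}})=k\alg$ because $k\alg$ is algebraically closed.

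Next I would descend both the point and the component to $C$ and $X$. Since $k$ is separably closed, $k\alg/k$ is purely inseparable, so $C_{k\alg}\to C$ and $X_{k\alg}\to X$ are universal homeomorphisms; in particular $\overline{x}$ lies over a unique closed point $x\in C$, and the fibre $X_{\overline{x}}$, which is obtained from $X_x$ by the purely inseparable base change $\kappa(\overline{x})/\kappa(x)$ (note $\kappa(\overline{x})=k\alg$, as $C_{k\alg}$ is smooth of dimension $1$ over $k\alg$), maps homeomorphically onto $X_x$. Hence $X_x$ and $X_{\overline{x}}$ have the same irreducible components, in bijection with one another. If $X_x$ had a single component, then so would $X_{\overline{x}}$, whence $X_{\overline{x}}=m\overline{Z}$ as Cartier divisors for some $m\geq 1$; but $\overline{f}$ contracts $\overline{Z}$ to the point $\overline{x}$, so $\Og_{X_{k\alg}}(X_{\overline{x}})=\overline{f}^\ast\Og_{C_{k\alg}}(\overline{x})$ restricts trivially to $\overline{Z}$, forcing $\deg_{k\alg}\overline{\iota}^\ast\Og_{X_{k\alg}}(\overline{Z})=0$, contrary to the choice of $\overline{Z}$. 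So $X_x$ — and hence $X_{\overline{x}}$ — has at least two irreducible components, and Lemma \ref{P1lem} applies at $x$: each component of $X_x$ is isomorphic to $\Ps^1_{\kappa(x)}$ and $\kappa(x)=k$. Let $Z\subseteq X_x$ be the reduced component corresponding to $\overline{Z}$; thus $Z\cong\Ps^1_k$, which is in particular geometrically integral over $k$.

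It then remains to transport the value $-1$ down to $f$. Because $Z$ is geometrically integral over $k$, the scheme-theoretic preimage of $Z$ under the flat morphism $\pi\colon X_{k\alg}\to X$ is $Z\times_k k\alg\cong\Ps^1_{k\alg}$, which is reduced and equals $\overline{Z}$; hence $\pi^\ast\Og_X(Z)=\Og_{X_{k\alg}}(\overline{Z})$, with no spurious multiplicity. Restricting along the cartesian square whose top and bottom rows are the inclusions $\overline{\iota}\colon\overline{Z}\hookrightarrow X_{k\alg}$ and $\iota\colon Z\hookrightarrow X$ and whose columns are $\pi$ and the projection $p\colon\overline{Z}=\Ps^1_{k\alg}\to\Ps^1_k=Z$, I obtain $\overline{\iota}^\ast\Og_{X_{k\alg}}(\overline{Z})=p^\ast\iota^\ast\Og_X(Z)$. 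Since the degree of a line bundle on $\Ps^1$ is unchanged by extension of the base field, this yields $\deg_k\iota^\ast\Og_X(Z)=\deg_{k\alg}\overline{\iota}^\ast\Og_{X_{k\alg}}(\overline{Z})=-1$. As $k_Z=\Gamma(Z,\Og_Z)=k=\kappa(x)$ and $Z\cong\Ps^1_{k_Z}$, Proposition \ref{Castelnuovoprop}(iii) now shows that $f$ is not minimal, contradicting the hypothesis. Therefore $\overline{f}$ is minimal.

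The step requiring the most care is the bookkeeping of multiplicities when base-changing the divisor $Z$: a priori the reduced preimage of $Z$ in $X_{k\alg}$ could carry $\Og_X(Z)$ only as a nontrivial tensor power, which would wreck the degree comparison. It is exactly Lemma \ref{P1lem} — giving $\kappa(x)=k$ and $Z\cong\Ps^1_k$, hence geometric integrality of $Z$ — that rules this out, and this is also why the case of a one-component fibre has to be disposed of separately first, before Lemma \ref{P1lem} becomes available.
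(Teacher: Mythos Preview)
Your proof is correct and follows essentially the same route as the paper: assume an exceptional curve $\overline{Z}$ exists, rule out the one-component case, invoke Lemma \ref{P1lem} to get $\kappa(x)=k$ and $Z\cong\Ps^1_k$ so that $\overline{Z}=Z\times_k k\alg$ scheme-theoretically, and then transport the self-intersection number along the base change. The only cosmetic difference is the final contradiction: the paper uses the value $\deg_k\iota^\ast\Og_X(Z)=-2$ computed inside the proof of Lemma \ref{P1lem}, whereas you carry the value $-1$ down and invoke Castelnuovo for $f$ directly---both are fine, and yours has the mild advantage of relying only on the statement of Lemma \ref{P1lem} rather than its proof.
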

\begin{proof}
Let $x$ be a closed point of $C$ above which the fibre of $X$ is singular, and let $\overline{x}$ be the unique preimage of $x$ in $C_{k\alg}.$ We will show that no irreducible component of $\overline{f}^{-1}(\overline{x})$ satisfies the conditions from Castelnuovo's criterion. If $\overline{Z}$ were such a component, then since $f^{-1}(x)$ and $\overline{f}^{-1}(\overline{x})$ are homeomorphic, there would be a unique  component $Z$ of $f^{-1}(x)$ such that $\overline{Z}=(Z\times_{\Spec\kappa(x)}\Spec k\alg)_{\mathrm{red}}.$ It is clear that $\overline{Z}$ cannot be the only component of $\overline{f}^{-1}(\overline{x})$. Therefore Lemma \ref{P1lem} tells us that $Z$ is geometrically reduced over $\kappa(x)=k,$ so that we have $\overline{Z}=Z\times_{\Spec k}\Spec k\alg$ scheme-theoretically.
We now consider the diagram
$$\begin{tikzcd}
\overline{Z} \arrow{r}{\overline{\iota}} \arrow[swap]{d}{\pr_Z}&X_{k\alg}\arrow{d}{\pr}\\
Z\arrow[swap]{r}{\iota}&X.
\end{tikzcd}$$ 
Then we have $\pr^\ast \Og_X(Z)=\Og_{X_{k\alg}}(\overline{Z}),$ and hence
$$\deg_{k\alg} \overline{\iota}^\ast \Og_{X_{k\alg}}(\overline{Z})=\deg_k\iota^\ast \Og_X(Z)=-2,$$ contradicting our initial assumption that $\deg_{k\alg} \overline{\iota}^\ast \Og_{X_{k\alg}}(\overline{Z}) = -1$. 
\end{proof}
\begin{proposition}
Let $k$ be an arbitrary field of characteristic not equal to $2$ or $3$. Let $f\colon X\to C$ be a minimal elliptic surface over $k$. Then the induced morphisms $X_{k\alg}\to C_{k\alg}$ and $X_{k\sep}\to C_{k\sep}$ are also minimal.  \label{generalminpropII}
\end{proposition}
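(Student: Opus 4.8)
The plan is to deduce both statements from the separably closed case, which is Proposition~\ref{generalminprop}. Since $k\alg$ is an algebraic closure of the separably closed field $k\sep$, it is enough to show that $X_{k\sep}\to C_{k\sep}$ is a minimal elliptic surface over $k\sep$; then Proposition~\ref{generalminprop}, applied with $k\sep$ in place of $k$, yields that $X_{k\alg}\to C_{k\alg}$ is minimal as well. That $X_{k\sep}\to C_{k\sep}$ is an elliptic surface over $k\sep$ is a routine base-change check: smoothness, projectivity and the existence of a section are stable under base change; $X$ and $C$ stay geometrically integral over $k\sep$ because $(X_{k\sep})_{k\alg}=X_{k\alg}$ and $(C_{k\sep})_{k\alg}=C_{k\alg}$ are integral; and, writing $\eta'$ for the generic point of $C_{k\sep}$, the generic fibre of $X_{k\sep}\to C_{k\sep}$ is $X_\eta\times_{\Spec\kappa(\eta)}\Spec\kappa(\eta')$ for the field extension $\kappa(\eta)\subseteq\kappa(\eta')$, so it is smooth with $\dim H^1(\,\cdot\,,\Og)=1$ by flat base change for coherent cohomology.

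For minimality of $X_{k\sep}\to C_{k\sep}$ I would use Castelnuovo's criterion in the local form of Proposition~\ref{Castelnuovoprop}(ii): it suffices to prove, for every closed point $s'$ of $C_{k\sep}$ with image $s$ in $C$, that the base change of $X\times_C\Spec\Og_{C,s}$ along $\Og_{C,s}\to\Og_{C_{k\sep},s'}$ is the minimal proper regular model of its generic fibre. The extension $\Og_{C,s}\to\Og_{C_{k\sep},s'}$ is a filtered colimit, over the finite separable subextensions $k'$ of $k\sep/k$, of the extensions $\Og_{C,s}\to\Og_{C_{k'},t}$ (with $t$ the image of $s'$ in $C_{k'}$), each of which is \'etale, being a localisation of the finite \'etale algebra $\Og_{C,s}\otimes_k k'$. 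Since $X\to C$ is minimal, $X\times_C\Spec\Og_{C,s}$ is (by Proposition~\ref{Castelnuovoprop}) the minimal proper regular model of the elliptic curve $X_\eta$ over $\Og_{C,s}$. So by a standard limit argument the whole proposition reduces to: \emph{if $R\subseteq R'$ is an \'etale extension of discrete valuation rings and $\mathscr{E}\to\Spec R$ is the minimal proper regular model of an elliptic curve $E$ over $R$, then $\mathscr{E}\times_R R'$ is the minimal proper regular model of $E\times_{\Frac R}\Frac R'$ over $R'$.} Here $\mathscr{E}\times_R R'$ is automatically regular (\'etale base change preserves regularity), proper and flat over $R'$ with the correct generic fibre, so only minimality is in question.

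I expect this compatibility of the minimal proper regular model with \'etale base change to be the one genuine technical point; it is well known, and I would prove it by descent rather than by comparing fibre components directly. After a routine reduction (for instance, passing to the Henselisation of $R$) we may assume $R\subseteq R'$ is Galois with group $H$, and that minimality survives this reduction — harmless because isomorphisms, and hence the defining property in Definition~\ref{mindef}, descend along faithfully flat base change. The group $H$ then acts on $\mathscr{E}\times_R R'=\mathscr{E}\times_R\Spec R'$ through the factor $\Spec R'$, and by the \emph{uniqueness} of the minimal model the contraction $c\colon\mathscr{E}\times_R R'\to\mathscr{F}$ onto the minimal model $\mathscr{F}$ of $E\times_{\Frac R}\Frac R'$ is $H$-equivariant; it therefore descends to a morphism $\mathscr{E}=(\mathscr{E}\times_R R')/H\to\mathscr{F}/H=:\mathscr{G}$ of proper flat $R$-models of $E$ which is the identity on generic fibres. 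Now $\mathscr{G}$ is \emph{regular}, since $\mathscr{G}\times_R R'=\mathscr{F}$ is regular and regularity descends along the faithfully flat extension $R\to R'$; hence by minimality of $\mathscr{E}$ (Definition~\ref{mindef}) the morphism $\mathscr{E}\to\mathscr{G}$ is an isomorphism, and base changing back to $R'$ shows that $c$ is an isomorphism, i.e.\ $\mathscr{E}\times_R R'$ was minimal after all. The reason a more direct argument via Castelnuovo's criterion~(iii) is awkward here is that a fibre component of $\mathscr{E}$ which is a nontrivial form of $\Ps^1$ can become split over the separable extension $\Frac R'/\Frac R$, so a would-be $(-1)$-component of the special fibre of $\mathscr{E}\times_R R'$ need not visibly descend to a $(-1)$-component of $\mathscr{E}$; following the contraction morphism rather than the components sidesteps this.
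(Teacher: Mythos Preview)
Your argument is correct, but it takes a genuinely different route from the paper's. Both proofs begin the same way: reduce to showing that $X_{k\sep}\to C_{k\sep}$ is minimal, then invoke Proposition~\ref{generalminprop} for the passage from $k\sep$ to $k\alg$. From there the paper argues via Weierstrass equations and Tate's algorithm: for a closed point $y\in C_{k\sep}$ over $x\in C$, the extension $\Og_{C,x}\subseteq\Og_{C_{k\sep},y}$ has ramification index $1$, so a minimal Weierstrass equation over $\Og_{C,x}$ remains a Weierstrass equation over $\Og_{C_{k\sep},y}$, and Szyd\l o's extension of Tate's algorithm to imperfect residue fields shows that the Kodaira reduction type (hence the number of components of the special fibre of the minimal model) is unchanged; the canonical morphism from $X_{k\sep}\times_{C_{k\sep}}\Spec\Og_{C_{k\sep},y}$ to the minimal model therefore contracts nothing and is an isomorphism. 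Your approach, by contrast, never touches Weierstrass equations: you use the uniqueness of the minimal proper regular model to equip it with a Galois action, descend both the model and the contraction morphism, and conclude by the defining property of minimality over the base ring. This descent argument is more conceptual and is not specific to genus one --- it would prove the analogous statement for minimal regular models of curves of any positive genus --- whereas the paper's argument is tailored to the elliptic situation and dovetails with its later systematic use of Weierstrass models. One small expository point: your justification that ``minimality survives'' the passage to the Henselisation (``isomorphisms descend'') only covers the downward direction; the upward direction (that $\mathscr{E}_{R^h}$ is still minimal) is what you actually need to preserve the hypothesis, and this follows most cleanly from Proposition~\ref{Castelnuovoprop}(iii) together with the fact that $R$ and $R^h$ share the same residue field, so the special fibres are literally identical.
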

\begin{proof}
By Proposition \ref{generalminprop}, all we have to show is that $X_{k\sep}\to C_{k\sep}$ is minimal. Let $x\in C$ be a closed point and let $y\in C_{k\sep}$ be a preimage of $x$ under the canonical projection. The local extension $\Og_{C, x}\subseteq \Og_{C_{k\sep}, y}$ has ramification index $1$. Since $X$ is smooth over $k$, we know that $X_{k\sep}$ is smooth over $k\sep$, and therefore $X_{k\sep}\times_{C_{k\sep}} \Spec \Og_{C_{k\sep},y}$ is a proper regular model of its generic fibre. A minimal Weierstrass equation of $X\times_C\Spec \Og_{C,x}$ also yields a Weierstrass equation of $X_{k\sep}\times_{C_{k\sep}} \Spec \Og_{C_{k\sep},y}$ over $\Og_{C_{k\sep},y}$. Tate's algorithm (see \cite[pp. 364ff]{Silv} and \cite[Lemma 1]{Szy}) shows that the reduction type of the special fibre of the minimal proper regular model $\mathscr{E}$ of the generic fibre of $X_{k\sep}\times_{C_{k\sep}} \Spec \Og_{C_{k\sep},y}$ is the same as the reduction type of $X\times_C \Spec \Og_{C, x}.$ This implies that the canonical morphism $X_{k\sep}\times_{C_{k\sep}} \Spec \Og_{C_{k\sep},y}\to \mathscr{E}$ is an isomorphism, which proves our claim.
\end{proof}
\subsection{Auxiliary results on elliptic surfaces over arbitrary fields}
We now collect some auxiliary results on elliptic surfaces $f\colon X\to C$ over general base fields $k$, which we do not assume to be perfect or separably closed. These results show that minimal elliptic surfaces are quite ``common'', in the sense that any elliptic surface can be turned into a minimal one without losing smoothness, and that any K3 surface which admits the structure of an elliptic surface is automatically minimal.  Of course such results are well-documented over more restrictive classes of base fields.
\begin{proposition}
Let $f\colon X\to C$ a (not necessarily minimal) elliptic surface over $k$, and let $f' \colon X' \to C$ be the model of the generic fibre of $f$ obtained by contracting an exceptional curve on $X$ contained in a fibre of $f$. Then $X'$ is smooth over $k$. In particular, the minimal proper regular model of $f$ is smooth over $k.$ 
\end{proposition}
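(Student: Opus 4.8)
The plan is to obtain the contraction $\pi\colon X\to X'$ from the theory of regular arithmetic surfaces, to reduce the smoothness of $X'$ over $k$ to the regularity of the geometric fibre $X'_{k\alg}$, and then to use the hypothesis that the contracted curve $E$ is exceptional --- after base change to $k\alg$ --- to reduce to the classical Castelnuovo contraction theorem over the algebraically closed field $k\alg$. To set this up: $E$ lies in some fibre $f^{-1}(c)$ with $c\in C$ closed, so working over the excellent discrete valuation ring $\Og_{C,c}$, the theory of contractions of exceptional curves (see \cite{Chi}) yields a projective birational morphism $\pi\colon X\to X'$ with $X'$ regular and $\pi$ an isomorphism over $X'\setminus\{p\}$, where $p:=\pi(E)$, and --- since $X'$ is normal --- with $\pi_\ast\Og_X=\Og_{X'}$; moreover $E\cong\Ps^1_{k_E}$, where $k_E:=\Gamma(E,\Og_E)=\kappa(p)$ is a finite extension of $k$, and $\deg_{k_E}\bigl(\Og_X(E)|_E\bigr)=-1$. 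Because $E$ is contained in a fibre, $f'\colon X'\to C$ is projective with generic fibre still the elliptic curve $X_\eta$, and it is flat by the local criterion of flatness, $X'$ being Cohen--Macaulay, $C$ regular, and every fibre of $f'$ of dimension $1$. It therefore remains to prove that $X'$ is smooth over $k$, and since this may be checked after the ground field extension $k\subseteq k\alg$, it suffices to show that $X'_{k\alg}$ is regular.

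The crux is the base change to $k\alg$. By flat base change along $\Spec k\alg\to\Spec k$, the morphism $\pi_{k\alg}\colon X_{k\alg}\to X'_{k\alg}$ is projective and birational, satisfies $(\pi_{k\alg})_\ast\Og_{X_{k\alg}}=\Og_{X'_{k\alg}}$, and is an isomorphism over $X'_{k\alg}\setminus\{p_1,\dots,p_m\}$, where $\{p_1,\dots,p_m\}=\Spec(k_E\otimes_kk\alg)$. As $X/k$ is smooth, $X_{k\alg}$ is a smooth projective surface over $k\alg$, so $X'_{k\alg}$ is already regular away from $p_1,\dots,p_m$, and everything reduces to these points. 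Write $k_E\otimes_kk\alg=\prod_{j=1}^{m}A_j$ with $A_j$ Artinian local of length $\ell_j$, so that $E_{k\alg}=\coprod_{j=1}^{m}\Ps^1_{A_j}$ is a disjoint union whose reduced components $E_j:=(\Ps^1_{A_j})_{\mathrm{red}}\cong\Ps^1_{k\alg}$ are pairwise disjoint prime divisors on $X_{k\alg}$, and $E_{k\alg}=\sum_{j}\ell_jE_j$ as a Cartier divisor. Restricting the line bundle $\Og_X(E)|_E\cong\Og_{\Ps^1_{k_E}}(-1)$ to $E_j$ (the reduction of $\Ps^1_{A_j}$) gives $\Og_{\Ps^1_{k\alg}}(-1)$, so, computing self- and mutual intersections in the smooth surface $X_{k\alg}$ and using that the $E_i$ are pairwise disjoint,
$$-1=\deg_{k\alg}\bigl(\Og_{X_{k\alg}}(E_{k\alg})|_{E_j}\bigr)=\sum_{i}\ell_i\,(E_i\cdot E_j)=\ell_j\,(E_j^2)\qquad(1\le j\le m).$$
Hence $\ell_j=1$ and $E_j^2=-1$ for every $j$. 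In particular $k_E\otimes_kk\alg$ is reduced, so $k_E/k$ is separable, $m=[k_E:k]$, and $E_{k\alg}=\coprod_{j=1}^{m}E_j$ is a disjoint union of genuine $(-1)$-curves on the smooth projective surface $X_{k\alg}$.

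To conclude, observe that $X'_{k\alg}$ is normal: its structure sheaf is $(\pi_{k\alg})_\ast$ of that of the normal scheme $X_{k\alg}$, and $\pi_{k\alg}$ is proper birational, so each ring of sections of $\Og_{X'_{k\alg}}$ is integrally closed in the function field. A normal surface receiving a proper birational morphism from a smooth projective surface that contracts precisely the pairwise disjoint $(-1)$-curves $E_1,\dots,E_m$ is, by the classical Castelnuovo contraction theorem together with the uniqueness of such contractions, isomorphic to the smooth projective surface obtained by successively blowing these curves down; hence $X'_{k\alg}$ is smooth over $k\alg$, and $X'$ is smooth over $k$. Moreover $X'$ is geometrically integral (it is birational to $X$, and $X'_{k\alg}$, a blow-down of the integral surface $X_{k\alg}$, is integral) and projective over $k$, and $\pi\circ\sigma$ is a section of $f'$, so $f'\colon X'\to C$ is again an elliptic surface over $k$. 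The ``in particular'' now follows by induction: the minimal proper regular model of $f$ is obtained from the projective regular model $X$ by a finite chain of contractions of exceptional curves contained in fibres --- each step strictly decreasing the number of irreducible components of the singular fibres and, by what we have just proved, yielding an elliptic surface over $k$ which is smooth over $k$ --- and when no further exceptional curve remains, Castelnuovo's criterion (Proposition \ref{Castelnuovoprop}) identifies the result with the minimal proper regular model.

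The only genuine obstacle is the displayed identity: one must rule out that the numerical data of an exceptional curve concentrate along an inseparable residue field extension, which is exactly the content of the equality $\ell_j=1$. Once $E_{k\alg}$ is known to be a disjoint union of ordinary $(-1)$-curves, the rest is a formal combination of the classical theory of surfaces over an algebraically closed field with faithfully flat descent of smoothness.
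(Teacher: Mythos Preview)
Your argument is correct, and it reaches the same first milestone as the paper --- separability of $k_E/k$ --- via essentially the same degree obstruction, but packaged differently: you base-change all the way to $k\alg$, write $E_{k\alg}=\sum_j\ell_jE_j$, and read off $\ell_j=1$ from $\ell_j\,E_j^2=-1$; the paper instead compares $X$, $X_\ell$, and $X_{k_Z}$ (with $\ell$ the separable closure of $k$ in $k_Z=k_E$) and deduces $[k_Z:\ell]\mid 1$ directly. These are two incarnations of the same numerical fact.

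Where the proofs genuinely diverge is in the endgame. After establishing separability, the paper passes only to a finite Galois closure $\tilde{\ell}$ of $k_E$, observes that the image point $q$ becomes a finite set of $\tilde{\ell}$-rational points of $X'_{\tilde{\ell}}$, and checks regularity there by hand: since $\kappa(q)=\tilde{\ell}$, one has $\mathfrak{m}_{X'_{\tilde{\ell}},x}=\mathfrak{m}_{X',q}\otimes_{\tilde{\ell}}\tilde{\ell}$, so two generators suffice; smoothness then follows by a further passage to $\tilde{\ell}\perf$. You instead stay over $k\alg$, use flat base change to get $(\pi_{k\alg})_\ast\Og_{X_{k\alg}}=\Og_{X'_{k\alg}}$, conclude that $X'_{k\alg}$ is normal, and then identify $X'_{k\alg}$ with the classical Castelnuovo blow-down of the disjoint $(-1)$-curves $E_1,\dots,E_m$ via uniqueness of the normal contraction (Stein factorization / rigidity). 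Your route is more conceptual and reduces everything to the well-known theory over an algebraically closed field; the paper's route is more elementary in that it avoids invoking Castelnuovo over $k\alg$ and the uniqueness of contractions, at the cost of an explicit local-ring computation. Both are perfectly valid; the trade-off is ``black-box classical surface theory'' versus ``explicit two-generator check''.
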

\begin{proof}
Let $Z$ be a an exceptional curve, i.e., a reduced irreducible component of a fibre of $f$ such that, if $k_Z:=\Gamma(Z, \Og_Z)$ and $\iota\colon Z\to X$ denotes the canonical closed immersion, we have $Z\cong \Ps^1_{k_Z}$ and $\deg_{k_Z}\iota^\ast \Og_X(Z)=-1$. Let $\ell$ be the separable closure of $k$ in $k_Z.$ Consider the commutative diagram
$$\begin{tikzcd}  \label{CD}
& X_{k_Z} \arrow{d}{\pi}\\
Z \arrow{ru}{\iota_{k_Z}} \arrow{r}[swap]{\iota_\ell}\arrow{rd}[swap]{\iota} & X_\ell \arrow{d}{\pi'}\\
&X,
\end{tikzcd}$$
where $X_\ell:=X\times_{\Spec k}\Spec \ell$ and similarly for $k_Z.$ The map $Z\to Z\times_{\Spec k}\Spec \ell$ identifies $Z$ (scheme-theoretically) with a connected component of $Z\times_{\Spec k}\Spec \ell.$ Hence we can find an effective divisor $D$ on $X_\ell$ such that $\pi'^\ast\Og_X(Z)\cong \Og_{X_\ell}(Z+D)$ and such that the support of $D$ does not intersect that of $Z.$ In particular, we have $\iota_\ell^\ast\pi'^\ast\Og_X(Z)\cong \iota_\ell^\ast\Og_{X_\ell}(Z),$ whence
$$
-1=\deg_{k_Z}\iota^\ast \Og_X(Z) =\deg_{k_Z} \iota_\ell^\ast \Og_{X_\ell}(Z).
$$
We claim that, in fact, $k_Z=\ell.$ Indeed, we have
$$
-1=\deg_{k_Z} \iota_{\ell}^\ast \Og_{X_{\ell}}(Z) =\deg_{k_Z} \iota_{k_Z}^\ast \pi^\ast \Og_{X_\ell}(Z) =[k_Z:\ell] \deg_{k_Z} \iota_{k_Z}^\ast \Og_{X_{k_Z}}(Z).$$
The third equality can be seen as follows. Recall that $\pi$ is a homeomorphism, let $\mathfrak{z}$ be a generic point of $Z$ in $X_{\ell}$, and let $\mathfrak{z}'$ be its unique preimage in $X_{k_Z}$. Then $\Og_{X_{\ell}, \mathfrak{z}} \subseteq \Og_{X_{k_Z}, \mathfrak{z}'}$ is a finite extension of discrete valuation rings with  ramification index equal to $[k_Z:\ell].$ This implies that $\pi^\ast\Og_{X_{\ell}}(Z)=\Og_{X_{k_Z}}(Z)^{\otimes[k_Z:\ell]}.$ Therefore we obtain that $[k_Z:\ell]=1.$

The assumptions imply that we can contract $Z:$ We obtain a regular scheme $X'\to C$ together with a contraction map $X\to X'$. It remains to prove that $X'$ is smooth over $k.$ 

All of $Z$ is mapped to a single point $q\in X'$ with the property that $\kappa(q)=\ell$. Let $\tilde{\ell}$ be a Galois closure of $\ell$ over $k$. It suffices to show that $X'\times_{\Spec k}\Spec \tilde{\ell}$ is smooth over $\tilde{\ell}$. Let $\tilde{\ell}\perf$ denote the perfect closure of $\tilde{\ell}.$ Then $\Spec \tilde{\ell}\perf\to \Spec \tilde{\ell}$ is a universal homeomorphism, and we want to show that $X'\times_{\Spec k} \Spec \tilde{\ell}\perf$ is regular. This is clear at all points of $X'\times_{\Spec k} \Spec \tilde{\ell}\perf$ which do \emph{not} lie over the image of $\Spec \ell\times_{\Spec k}\Spec \tilde{\ell}\to X'\times_{\Spec k} \Spec \tilde{\ell};$ this image consists of finitely many $\tilde{\ell}$-rational points of $X\times_{\Spec k} \Spec \tilde{\ell}.$ 

Let $x$ be a closed point in this image, with unique preimage $y$ in the fibre product $X'\times_{\Spec k}\Spec\tilde{\ell}\perf.$ Denoting $X'_{\tilde{\ell}}:=X'\times_{\Spec k}\Spec \tilde{\ell}$ (and similarly for $\tilde{\ell}\perf$), we have $$\Og_{X'_{\tilde{\ell}\perf}, y}=\Og_{X'_{\tilde{\ell}}, x}\otimes_{\tilde{\ell}}\,\tilde{\ell}\perf.$$ Since $\kappa(x)=\tilde{\ell}$, the maximal ideals of those local rings satisfy $$\mathfrak{m}_{X'_{\tilde{\ell}\perf}, y}=\mathfrak{m}_{X'_{\tilde{\ell}}, x}\otimes_{\tilde{\ell}}\,\tilde{\ell}\perf.$$ This shows in particular that $\mathfrak{m}_{X'_{\tilde{\ell}\perf}, y}$ is generated by two elements; indeed, $X'$, and hence also  $\Og_{X'_{\tilde{\ell}}, x},$ are known to be regular. We conclude that $\Og_{X'_{\tilde{\ell}\perf}, y}$ is regular, as required.
\end{proof}\\
We now show that elliptic K3 surfaces are minimal. Recall that an elliptic K3 surface over $k$ is an elliptic surface $f\colon X\to C$ over $k$ such that $\omega_{X/k}\cong \Og_X$ and $H^1(X, \Og_X)=0.$ 
\begin{proposition}
Let $X$ be a K3 surface over $k$ and let $k_Z$ be a finite extension of $k$. Let $Z:=\Ps^1_{k_Z}$ and let $\iota\colon Z\to X$ be a closed immersion over $k$. Then $\deg_{k_Z} \iota^\ast \Og_X(Z)\not=-1.$  \label{K3minlem}
\end{proposition}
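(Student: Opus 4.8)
The plan is to obtain this as an immediate consequence of the adjunction formula, in exactly the spirit of the proof of Lemma \ref{P1lem}; the only new input is that the canonical sheaf of a K3 surface is trivial. To begin, one observes that $\iota$ realises $Z=\Ps^1_{k_Z}$ as a reduced, integral closed subscheme of $X$ of codimension one (it can be neither all of $X$ nor of codimension two); since $X$ is regular, any such subscheme is an effective Cartier divisor, so $\Og_X(Z)$ is a genuine line bundle and the adjunction formula applies to $Z$. That formula, for the effective Cartier divisor $Z$ on the smooth projective surface $X$ over $k$, yields an isomorphism of dualizing sheaves
$$\omega_Z\;\cong\;\iota^\ast\bigl(\omega_{X/k}\otimes\Og_X(Z)\bigr).$$

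Next I would substitute the two available facts. Since $X$ is a K3 surface, $\omega_{X/k}\cong\Og_X$, so the right-hand side collapses to $\iota^\ast\Og_X(Z)$. Since $Z\cong\Ps^1_{k_Z}$, its dualizing sheaf has degree $2p_a(Z)-2=-2$ over $k_Z$ (here $p_a(Z)=\dim_{k_Z}H^1(Z,\Og_Z)=0$). Taking degrees over $k_Z$ therefore gives $\deg_{k_Z}\iota^\ast\Og_X(Z)=-2$, which is in particular not equal to $-1$, as claimed.

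I do not expect any genuine obstacle here: the whole argument is a single invocation of adjunction once the triviality of $\omega_{X/k}$ is used. The only two points deserving a moment's care are (a) checking that $Z$ is honestly an effective Cartier divisor on $X$ — which rests on the regularity of $X$ and is what makes $\Og_X(Z)$ and the adjunction formula meaningful over the possibly imperfect field $k$ — and (b) noting that computing the dualizing sheaf of $Z$ relative to $k$ rather than relative to $k_Z$ is harmless, since the two differ only by the pullback of an invertible sheaf from the point $\Spec k_Z$, which has degree zero; in particular nothing in the argument is sensitive to whether $k_Z/k$ is separable.
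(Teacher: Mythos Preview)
Your proof is correct and considerably more direct than the paper's. You apply adjunction immediately on $X$ over $k$: since $\omega_{X/k}\cong\Og_X$, adjunction gives $\omega_{Z/k}\cong\iota^\ast\Og_X(Z)$, and you correctly note that $\omega_{Z/k}$ and $\omega_{Z/k_Z}$ differ only by the pullback of the (one--dimensional) $k_Z$--module $\omega_{k_Z/k}$ from a point, hence are isomorphic as line bundles; so $\deg_{k_Z}\iota^\ast\Og_X(Z)=\deg_{k_Z}\omega_{Z/k_Z}=-2$.

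The paper takes a more circuitous route, modelled on the proof of Lemma~\ref{P1lem}: it argues by contradiction, first passes to the separable closure $\ell$ of $k$ in $k_Z$ to reduce to the purely inseparable case, then base--changes to $X_{k_Z}$ and uses that $\pi^\ast\Og_X(Z)\cong\Og_{X_{k_Z}}(Z)^{\otimes[k_Z:k]}$ together with adjunction on $X_{k_Z}/k_Z$ to obtain $\iota^\ast\Og_X(Z)\cong\omega_{Z/k_Z}^{\otimes[k_Z:k]}$, whence $\deg_{k_Z}\iota^\ast\Og_X(Z)=-2[k_Z:k]$, contradicting the assumed value $-1$. Your argument avoids the separable reduction and the ramification bookkeeping entirely. (It is amusing to note that the two computations together force $[k_Z:k]_{\mathrm{insep}}=1$, i.e.\ a K3 surface over $k$ cannot contain a $\Ps^1_{k_Z}$ with $k_Z/k$ genuinely inseparable; neither proof needs this, but it explains why the two apparently different numerical values are compatible.) The only point to be scrupulous about in your write--up is the identification $\omega_{Z/k}\cong\omega_{Z/k_Z}$, which you already flag and justify; this is indeed where the imperfection of $k$ enters, and your treatment of it is fine.
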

\begin{proof}
Assume the Proposition is false. We first reduce to the case where $k_Z$ is purely inseparable over $k.$ As above, let $\ell$ be the separable closure of $k$ in $k_Z$ and consider the commutative diagram (\ref{CD}). The same argument then  shows that $\iota_{\ell}^\ast \Og_{X_{\ell}}(Z)\cong \iota^\ast \Og_X(Z).$

Therefore we may now assume that $\ell=k$, $\pi'=\mathrm{Id}_X$ and $\iota_\ell=\iota$. Again as above, we have $\pi^\ast\Og_X(Z)=\Og_{X_{k_Z}}(Z)^{\otimes[k_Z:k]}.$ The morphism $\iota_Z$ commutes with the canonical projections from $Z$ and $X_{k_Z}$ to $\Spec k_Z$, both of which are smooth. Now adjunction tells us that
$$
\omega_{Z/k_Z}^{\otimes[k_Z:k]} \cong \iota_Z^\ast \Big(\Og_{X_{k_Z}}(Z)\underset{{\Og_{X_{k_Z}}}}{\otimes} \omega_{X_{k_Z}/k_Z}\Big)^{\otimes[k_Z:k]} \cong \iota^\ast \Og_X(Z).
$$
Since $Z\cong \Ps^1_{k_Z}$, we obtain 
$$
-1=\deg_{k_Z} \iota^\ast \Og_X(Z) =[k_Z:k]\deg_{k_Z} \omega_{Z/k_Z} =-2[k_Z:k],$$
which is clearly impossible. 
\end{proof}
\subsection{Weierstrass models}
In this section, we once again assume that $k$ is separably closed. We will recall (and refine) some basic results on Weierstrass models of elliptic curves, as well as their discriminants.
\begin{definition} \rm(Compare \cite[Definition 2.1]{Con}) \it
Let $R$ be a discrete valuation ring, and let $\eta$ be the generic point of $\mathrm{Spec}\,R$. A \rm planar integral Weierstrass model \it (or \rm Weierstrass model \it for short) of an elliptic curve $E$ over $\eta$ is a pair $(W, i)$ consisting of a closed subscheme $W\subseteq \Ps^2_R$ given by an equation
$$y^2z+a_1xyz+a_3yz^2-x^3-a_2x^2z-a_4xz^2-a_6z^3=0,$$
together with an isomorphism $i\colon E\to W_\eta$ sending the unit section of $E$ to $[0:1:0].$ \label{Weierstrassdef}
\end{definition}
To $(W, i)$ as above, we associate the elements \begin{eqnarray*} b_2 & := & a_1^2+4a_2 \\ b_4 &:=& 2a_4+a_1a_3 \\  b_6 &:=& a_3^2+4a_6 \\ b_8 &:=& a_1^2a_6+4a_2a_6-a_1a_3a_4+a_2a_3^2-a_4^2\end{eqnarray*} of $R,$ as well as the \it discriminant \rm 
$$\Delta:=-b_2^2b_8-8b_4^3-27b_6^2+9b_2b_4b_6.$$
It follows from \cite[Corollary 2.9]{Con} that any Weierstrass model of $E$ which is isomorphic to $W$ as a model of $E$ yields a discriminant which is an $R^\times$-multiple of $\Delta$. Hence the valuation of $\Delta$ only depends on the pair $(W, i)$ and not on the particular equation chosen. 
\begin{definition}
A Weierstrass model $(W, i)$ of an elliptic curve $E$ over $R$ is \rm minimal \it if its discriminant has minimal valuation among all possible Weierstrass models of $E.$  
\end{definition}

There is only one minimal Weierstrass model of a given elliptic curve, up to (unique) isomorphism of models; the minimal proper regular model of $E$ is equal to the minimal regular resolution of the minimal Weierstrass model (see \cite[Corollaries 4.6 and 4.7]{Con}). 

The following result is an immediate consequence of Szyd\l o's recent extension of Tate's algorithm to the case of potentially imperfect residue fields \cite{Szy}:
\begin{lemma} \label{Szydlo}
Let $R$ be a discrete valuation ring, and let $\eta$ and $s$ denote the generic and closed points respectively of $\mathrm{Spec}\,R$. Assume that $\kappa(s)$ is separably closed, of characteristic neither $2$ nor $3$. Let $E$ be an elliptic curve over $\eta$ with minimal proper regular model $\mathscr{E}\to \Spec R.$ Let $\nu$ be the valuation of the discriminant of a minimal Weierstrass model of $E$ over $R.$ Finally, let $m$ be the number of geometric irreducible components of $\mathscr{E}_s$, counted without multiplicities, and let 
$$\epsilon:=\begin{cases}-1 &\text{if $E$ has good reduction},\\
0& \text{if $E$ has multiplicative reduction},\\
1& \text{if $E$ has additive reduction}.\end{cases}$$
Then $m=\nu-\epsilon.$
\end{lemma}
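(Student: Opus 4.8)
The plan is to deduce Lemma \ref{Szydlo} from the Kodaira--N\'eron classification of special fibres of minimal proper regular models of elliptic curves, together with Szyd\l o's extension of Tate's algorithm to the case of potentially imperfect residue fields. The key point is that the classification produces, for each reduction type, both the number $m$ of geometric irreducible components of $\mathscr{E}_s$ (counted without multiplicity) and the valuation $\nu$ of the discriminant of a minimal Weierstrass model, and one simply checks that the relation $m = \nu - \epsilon$ holds type by type.

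First I would recall that since $\kappa(s)$ is separably closed and of characteristic $\not\in\{2,3\}$, Tate's algorithm in Szyd\l o's formulation applies to $E/\eta$ and determines the reduction type (in the usual Kodaira symbols $\mathrm{I}_0$, $\mathrm{I}_n$ for $n\geq 1$, $\mathrm{I}_n^\ast$, $\mathrm{II}$, $\mathrm{III}$, $\mathrm{IV}$, $\mathrm{II}^\ast$, $\mathrm{III}^\ast$, $\mathrm{IV}^\ast$) from a minimal Weierstrass equation, and that the geometric special fibre of $\mathscr{E}$ (obtained, as in Proposition \ref{generalminpropII}, by passing to $\kappa(s)\alg$ and applying the classical theory, which is legitimate because the reduction type is insensitive to the residual base change here) has the shape prescribed by Kodaira's list. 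Next I would tabulate, for each type, the triple $(\nu, m, \epsilon)$: for $\mathrm{I}_0$ one has $\nu = 0$, $m = 1$, $\epsilon = -1$; for $\mathrm{I}_n$ with $n\geq 1$ one has $\nu = n$, $m = n$, $\epsilon = 0$; for $\mathrm{I}_n^\ast$ one has $\nu = n+6$, $m = n+5$, $\epsilon = 1$; for $\mathrm{II}$, $\mathrm{III}$, $\mathrm{IV}$ one has $\nu = 2, 3, 4$ respectively and $m = 1, 2, 3$ respectively; and for $\mathrm{IV}^\ast$, $\mathrm{III}^\ast$, $\mathrm{II}^\ast$ one has $\nu = 8, 9, 10$ respectively and $m = 7, 8, 9$ respectively; in all the latter cases $\epsilon = 1$. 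In every line $m = \nu - \epsilon$, which is the assertion.

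The only subtlety worth spelling out is the passage to the geometric fibre: the number $m$ is defined via $\mathscr{E}_s\times_{\kappa(s)}\kappa(s)\alg$, and one must know that the minimal proper regular model commutes with this base change and that the reduction type is unchanged. The former is exactly the content of the argument already used in the proof of Proposition \ref{generalminpropII} (Tate's algorithm over $\Og_{C_{k\sep},y}$, or here over the strict henselisation, produces the same minimal Weierstrass equation and hence the same minimal regular model after base change, since $\kappa(s)$ is already separably closed the relevant base change is purely inseparable and a universal homeomorphism, so it preserves the combinatorial type of the fibre while possibly only renaming residue fields of components). Once this is in place the discriminant valuation $\nu$ is manifestly invariant, because a minimal Weierstrass equation over $R$ remains one after the (unramified, indeed here purely inseparable residual) extension, and the equality $m = \nu - \epsilon$ reduces to the finite check above.

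I do not expect any genuine obstacle here: the statement is a bookkeeping consequence of the classification, and the only thing that needs care — the compatibility of the minimal proper regular model with residual base change in the potentially imperfect setting — is handled by Szyd\l o's work and has effectively already been invoked in Proposition \ref{generalminpropII}. The ``hard part'', insofar as there is one, is simply making sure the Kodaira table is transcribed correctly and that the definition of $m$ (geometric components, no multiplicities) matches the entries used.
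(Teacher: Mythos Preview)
Your proposal is correct and follows essentially the same approach as the paper, which simply cites \cite[Lemma 1, Tables 1 and 3]{Szy}; you have just spelled out the type-by-type check that those tables encode. Your table of $(\nu,m,\epsilon)$ values is accurate, and the remark about base change to $\kappa(s)\alg$ is a reasonable elaboration, though Szyd\l o's tables already record the geometric component count directly for separably closed residue fields, so this step is implicitly contained in the reference.
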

\begin{proof}
This follows from \cite[Lemma 1, Tables 1 and 3]{Szy}.
\end{proof}\\
As usual, we define the terms \it good reduction \rm etc. to mean the following: Suppose $\mathscr{M}$ is the N\'eron model of $E$ over $R$ with identity component $\mathscr{M}^0.$ Choose an algebraic closure $\kappa(s)\alg$ of the residue field of $R.$ Then $E$ has \it good reduction \rm (resp. \it multiplicative reduction, additive reduction\rm) if $\mathscr{M}^0\times_{\Spec R}\Spec \kappa(s)\alg$ is an elliptic curve (resp. isomorphic to $\Gm$, $\mathbf{G}_{\mathrm{a}}$) over $\kappa(s)\alg$. 
\begin{lemma} In the setting of Lemma \ref{Szydlo}, we have $\nu = 0$ if and only if $\mathscr{E}$ is $R$-smooth. \label{DeltaElem}
\end{lemma}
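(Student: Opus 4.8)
The plan is to read this off from Lemma~\ref{Szydlo} together with standard facts about models of elliptic curves. Note first that $\mathscr{E}_s$ is non-empty, being the special fibre of a flat proper morphism onto $\Spec R$, so $m\geq 1$. I would then dispatch the implication ``$\mathscr{E}$ is $R$-smooth $\Rightarrow\nu=0$'' as follows. If $\mathscr{E}\to\Spec R$ is smooth and proper, then $\mathscr{E}_s$ is a smooth, proper, geometrically connected curve over $\kappa(s)$ (geometric connectedness follows from that of the generic fibre, since $H^0(\mathscr{E}_s,\Og_{\mathscr{E}_s})=\kappa(s)$ by flat base change and $\kappa(s)$ is separably closed), with $\dim_{\kappa(s)}H^1(\mathscr{E}_s,\Og_{\mathscr{E}_s})=1$ again by flat base change; since $\mathscr{E}$ carries a section (extend the unit section of $E$ by the valuative criterion of properness), $\mathscr{E}_s$ is an elliptic curve. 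Hence $E$ has good reduction, so $\epsilon=-1$, while $m=1$ because $\mathscr{E}_s$ is geometrically integral, and Lemma~\ref{Szydlo} gives $\nu=m+\epsilon=0$.

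For the converse, suppose $\nu=0$. Then $m=\nu-\epsilon=-\epsilon\in\{-1,0,1\}$, and the bound $m\geq 1$ forces $\epsilon=-1$ and $m=1$; in particular $E$ has good reduction. Let $\mathscr{A}\to\Spec R$ denote its N\'eron model, an abelian scheme and hence a proper \emph{regular} model of $E$. The universal property of the minimal proper regular model $\mathscr{E}$ recalled above produces a unique morphism $g\colon\mathscr{A}\to\mathscr{E}$ restricting to the identity on $E$. This $g$ cannot contract the whole of $\mathscr{A}_s$ (else its image would be at most one-dimensional, contradicting surjectivity onto the two-dimensional scheme $\mathscr{E}$), and $\mathscr{A}_s$ is irreducible, so $g$ has finite fibres; being proper and quasi-finite it is finite, and a finite birational morphism from the integral scheme $\mathscr{A}$ onto the normal integral scheme $\mathscr{E}$ is an isomorphism. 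Thus $\mathscr{E}\cong\mathscr{A}$ is $R$-smooth.

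I do not expect a serious obstacle: the statement is a short corollary of Lemma~\ref{Szydlo}. The one point that needs care is the bookkeeping in the second direction — exploiting $m\geq 1$ to pin down both $m=1$ and $\epsilon=-1$ — together with the (standard) input that a smooth proper model of an elliptic curve is its N\'eron model, equivalently that good reduction is detected by smoothness of the minimal proper regular model. Alternatively, one can avoid N\'eron models in the converse altogether: $\nu=0$ says that a minimal Weierstrass model of $E$ has discriminant in $R^\times$, hence is already smooth over $R$, hence coincides with its own minimal regular resolution, which is $\mathscr{E}$.
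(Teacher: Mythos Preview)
Your proof is correct, but the route differs from the paper's. The paper argues both directions through the minimal Weierstrass model $W$ and the map $\mathscr{E}\to W$ (the minimal regular resolution): if $\nu=0$ then $W$ is smooth, hence $\mathscr{E}\cong W$; if $\mathscr{E}$ is smooth then \cite[Theorem~5.4]{Con} identifies $\mathscr{E}$ with the N\'eron model $\mathscr{M}$, whence $\mathscr{M}_s$ is connected and \cite[Theorem~5.5]{Con} gives $W\cong\mathscr{M}$, so $W$ is smooth and $\nu=0$. Your main argument instead treats Lemma~\ref{Szydlo} as the engine and reads off both directions from the identity $\nu=m+\epsilon$ together with the standard equivalence ``$E$ has good reduction $\Leftrightarrow$ $E$ admits a smooth proper model''. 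This has the virtue of making Lemma~\ref{DeltaElem} a genuine corollary of Lemma~\ref{Szydlo} with minimal extra input, whereas the paper's proof is logically independent of Lemma~\ref{Szydlo} and rests directly on Conrad's structural results. Your ``alternative'' sentence at the end is essentially the paper's argument for $\nu=0\Rightarrow$ smooth, so you have in fact recovered half of the paper's proof as well.
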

\begin{proof} Let $W$ denote a minimal Weierstrass model of $E$. By \cite[Corollary 4.7]{Con}, we know that there is a unique morphism of models $\mathscr{E}\to W$, which turns $\mathscr{E}$ into the minimal regular resolution of $W$. A well-known calculation shows that $W$ is smooth if and only if $\nu=0$. Hence if $\nu=0$, it follows immediately that $\mathscr{E}\to W$ is an isomorphism, and hence that $\mathscr{E}$ is smooth over $R$. Conversely, if $\mathscr{E}$ is smooth over $R$, then \cite[Theorem 5.4]{Con} implies that the canonical morphism $\mathscr{E} \to \mathscr{M}$ from $\mathscr{E}$ to the N\'eron model $\mathscr{M}$ of $E$ over $R$ is an isomorphism. In particular, $\mathscr{M}_s$ is connected, and \cite[Theorem 5.5]{Con} implies that $W$ is isomorphic to $\mathscr{M}$, so that $W$ is $R$-smooth. We have already observed that this implies $\nu=0.$ 
\end{proof}
\begin{lemma}
In the setting of Lemma \ref{Szydlo}, let $(W,i)$ be a Weierstrass model of $E$. Assume that there is a morphism $\mathscr{E}\to W$ of models. Then $(W,i)$ is a minimal Weierstrass model.  \label{Wminimallem}
\end{lemma}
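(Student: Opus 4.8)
Here is how I would go about proving Lemma~\ref{Wminimallem}.

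\medskip
\noindent The plan is to show that the given model $(W,i)$ is isomorphic, \emph{as a model of $E$}, to the minimal Weierstrass model $W_{\min}$ of $E$. Granting this, the lemma follows immediately: by the discussion after Definition~\ref{Weierstrassdef} (an application of \cite[Corollary~2.9]{Con}), the discriminants of two isomorphic Weierstrass models of $E$ differ by a unit of $R$, hence have the same valuation, so $(W,i)$ attains the minimal discriminant valuation.

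\medskip
\noindent Write $\mathscr{E}\to\Spec R$ for the minimal proper regular model of $E$. By \cite[Corollary~4.7]{Con} there is a morphism of models $\pi_{\min}\colon\mathscr{E}\to W_{\min}$, and by hypothesis there is a morphism of models $\pi\colon\mathscr{E}\to W$. The first step is to check that $\mathscr{E}$ is the \emph{minimal regular resolution} of $W$. The total space of a planar Weierstrass model is normal: it is a hypersurface in $\Ps^2_R$, hence $S_2$, and it is $R_1$ since its generic fibre is smooth and its special fibre is an integral plane cubic, so it is regular at the generic point of the special fibre. Thus $\pi$ is a proper birational morphism onto a normal surface, and therefore factors through the minimal resolution $\widetilde{W}\to W$, say $\pi=r\circ q$ with $q\colon\mathscr{E}\to\widetilde{W}$. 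Now $\widetilde{W}$ is a regular proper model of $E$, so the universal property of $\mathscr{E}$ furnishes a (unique) morphism of models $p\colon\widetilde{W}\to\mathscr{E}$; then $p\circ q$ is a morphism of models $\mathscr{E}\to\mathscr{E}$, hence the identity. So $q$ is a section of the separated morphism $p$, hence a closed immersion; since $\mathscr{E}$ and $\widetilde{W}$ are integral of the same dimension, $q$ is an isomorphism, i.e.\ $\mathscr{E}\cong\widetilde{W}$.

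\medskip
\noindent It follows that $\pi$ is an isomorphism over the complement of the singular locus of $W$; and since $W_s$ is a Weierstrass cubic (hence smooth, nodal, or cuspidal) and $W$ is automatically regular at smooth points of $W_s$, the morphism $\pi$ is an isomorphism over $W$ minus at most one closed point $w\in W_s$. As $W_s$ is irreducible, exactly one irreducible component $C_0$ of $\mathscr{E}_s$ dominates $W_s$, and every other component is contracted by $\pi$, necessarily to $w$; thus $\pi^{-1}(w)$ equals the union $F$ of the irreducible components of $\mathscr{E}_s$ other than $C_0$, and $\pi$ restricts to an isomorphism $\mathscr{E}\setminus F\xrightarrow{\sim}W\setminus\{w\}$. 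Crucially, $C_0$ is intrinsic to $E$: the unit section $\sigma$ extends by properness to a section $\bar{\sigma}$ of $\mathscr{E}\to\Spec R$ which factors through the smooth locus of $\mathscr{E}/R$ --- this is the N\'eron model of $E$ (see \cite[Theorems~5.4 and~5.5]{Con}), to which $\sigma$ extends by the N\'eron mapping property --- so $\bar{\sigma}$ meets $\mathscr{E}_s$ in a smooth point lying on a single component, and since $\pi\circ\bar{\sigma}$ is the section of $W$ through $[0:1:0]\in W_s\setminus\{w\}$, that component is not contained in $F$, whence it equals $C_0$. Running the same argument with $W_{\min}$ in place of $W$ shows that $\pi_{\min}$ contracts precisely the same subscheme $F$ (to the unique singular point $w_{\min}$ of $W_{\min}$) and is an isomorphism $\mathscr{E}\setminus F\xrightarrow{\sim}W_{\min}\setminus\{w_{\min}\}$.

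\medskip
\noindent Finally, $W\cong W_{\min}$ by rigidity. Since $\pi$ is proper birational onto the normal $W$ we have $\pi_*\Og_{\mathscr{E}}=\Og_W$, and $\pi_{\min}$ is constant on the fibres of $\pi$ --- these are single points away from $w$, and the set $F$ above $w$, which $\pi_{\min}$ maps to $w_{\min}$. Hence $\pi_{\min}$ factors uniquely as $h\circ\pi$ for a morphism $h\colon W\to W_{\min}$; symmetrically one obtains $h'\colon W_{\min}\to W$ with $\pi=h'\circ\pi_{\min}$, and $h'\circ h$ and $h\circ h'$ are the identities, as they agree with the identity on the dense images of $\pi$ and $\pi_{\min}$ and the targets are reduced and separated. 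Thus $h$ is an isomorphism compatible with $\pi$ and $\pi_{\min}$, hence with the identifications of the generic fibres with $E$, giving $(W,i)\cong W_{\min}$ as models of $E$, and the lemma follows as explained above. I expect the delicate point to be the third paragraph: verifying that $\pi$ and $\pi_{\min}$ collapse \emph{the same} closed subscheme $F$ of $\mathscr{E}$ --- which rests on the normality of Weierstrass models, on the identification $\mathscr{E}\cong\widetilde{W}$, and on pinning down $C_0$ via the zero section --- after which rigidity applies cleanly; the remaining inputs (normality, the classification of singular Weierstrass cubics, $\pi_*\Og_{\mathscr{E}}=\Og_W$, and the universal property of $\mathscr{E}$) are routine.
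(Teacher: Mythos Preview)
Your argument is correct, but it takes a substantially more hands-on route than the paper's. Both proofs begin by showing that $\mathscr{E}$ is the minimal regular resolution of $W$, but they do so differently: the paper simply observes that, since $\mathscr{E}$ is minimal, the fibres of $\pi$ contain no exceptional curves, and then quotes \cite[Theorem~2.2.2]{CES} (together with normality of $W$, from \cite[Lemma~2.3]{Con}) to conclude. You instead factor $\pi$ through the minimal resolution $\widetilde{W}$ and use the universal property of $\mathscr{E}$ to produce inverse maps, which is a nice self-contained alternative avoiding the CES citation.

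The bigger divergence is in the second half. Once one knows that $\mathscr{E}$ is the minimal regular resolution of $W$, the paper simply invokes \cite[Corollary~4.7]{Con}, which characterises the minimal Weierstrass model as the one whose minimal regular resolution is the minimal proper regular model; the lemma is then immediate. You instead rebuild this characterisation by hand: you analyse the exceptional locus of $\pi$ and of $\pi_{\min}$, pin down the non-contracted component $C_0$ via the zero section, deduce that both maps contract exactly the same curve $F$, and then use $\pi_\ast\Og_{\mathscr{E}}=\Og_W$ and a rigidity argument to manufacture an isomorphism $W\cong W_{\min}$ of models. This is considerably longer, but it has the merit of being essentially independent of the finer statements in \cite{Con}; what the paper gains in brevity it trades for an extra black box.
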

\begin{proof}
The morphism $\pi\colon \mathscr{E}\to W$ is certainly proper and birational. Furthermore, its fibres cannot contain any exceptional curves: $\mathscr{E}$ is minimal by assumption, and therefore its special fibre does not contain any exceptional curves at all. Hence $\pi$ turns $\mathscr{E}$ into the minimal regular resolution of $W$, by \cite[Theorem 2.2.2]{CES}; here we use that $W$ is normal, which follows from \cite[Lemma 2.3]{Con}. The result now follows from \cite[Corollary 4.7]{Con}.
\end{proof}

We are now ready to prove the main result of this section; recall that $\mathrm{char}\,k \not\in \{2,3\}$.
\begin{theorem}
Let $f\colon X\to C$ be a minimal elliptic surface over $k$. Let $x\in C$ be a closed point such that the fibre of $f$ above $x$ is singular. Then $\kappa(x)=k$. \label{separablethm}
\end{theorem}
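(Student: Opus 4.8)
The plan is to derive a contradiction from the assumption $\kappa(x)\neq k$ by showing that $X$ would then fail to be \emph{smooth} over $k$ (not merely regular), contrary to the definition of an elliptic surface; the failure of smoothness is produced by the base change to $k\alg$, which is ramified above $x$ precisely because $\kappa(x)\neq k$.

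\textbf{Reductions.} If $\mathrm{char}\,k=0$ then $\kappa(x)=k$, since $\kappa(x)/k$ is separable and $k$ is separably closed; so assume $\mathrm{char}\,k=p\geq 5$. As $k$ is separably closed, $\kappa(x)/k$ is purely inseparable; assume for contradiction that $[\kappa(x):k]=p^{n}$ with $n\geq 1$, so $p^{n}\geq 2$. If $f^{-1}(x)$ has at least two irreducible components, Lemma \ref{P1lem} already gives $\kappa(x)=k$; so assume $f^{-1}(x)$ is irreducible. Put $R:=\Og_{C,x}$, a discrete valuation ring with residue field $\kappa(x)$, let $E:=X_{\eta}$ be the generic fibre of $f$ (an elliptic curve over $K:=\Frac R$), and set $\mathscr{E}:=X\times_{C}\Spec R$; since $X$ is smooth over $k$ it is regular, so $\mathscr{E}$ is a regular, proper, flat model of $E$, and as $f$ is minimal, $\mathscr{E}$ is the minimal proper regular model of $E$ (Proposition \ref{Castelnuovoprop}). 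Note $\mathscr{E}_{s}=f^{-1}(x)$ is irreducible, and $\Og_{\mathscr{E},q}=\Og_{X,q}$ for every $q$ in the special fibre.

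\textbf{$\mathscr{E}$ is a Weierstrass model, and the singular point.} By \cite[Cor.~4.7]{Con}, $\mathscr{E}$ is the minimal regular resolution of the minimal Weierstrass model $W$ of $E$, which is normal by \cite[Lemma~2.3]{Con}. If $\mathscr{E}\to W$ were not an isomorphism then, being a proper birational morphism from a regular surface onto a normal one and an isomorphism over the generic fibre, it would have a positive-dimensional fibre over a singular point of $W$; the resulting exceptional curve is an irreducible component of $\mathscr{E}_{s}$ distinct from the strict transforms of the components of $W_{s}$, so $\mathscr{E}_{s}$ would have at least two components, contrary to irreducibility. Hence $\mathscr{E}=W$. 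As $2,3\in R^{\times}$, we may write $W$ in the chart away from the zero section as $y^{2}=x^{3}+Ax+B$ with $A,B\in R$. The fibre $f^{-1}(x)=W_{s}$ is the plane curve $y^{2}=\overline{g}(x)$, $\overline{g}:=x^{3}+\overline{A}x+\overline{B}\in\kappa(x)[x]$, and is not smooth over $\kappa(x)$, so $\overline{g}$ has a multiple root $x_{0}$; since $\gcd(\overline{g},\overline{g}')$ is linear (or $\overline{g}=x^{3}$, in which case $x_{0}=0$), one has $x_{0}\in\kappa(x)$. The unique singular point of $W_{s}$ is then $q=(x_{0},0)$, with $\kappa(q)=\kappa(x)$. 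Lifting $x_{0}$ to $r_{0}\in R$ and substituting $x\mapsto x+r_{0}$,
$$\Og_{X,q}=\Og_{W,q}\cong\bigl(R[x,y]/(y^{2}-x^{3}-c_{2}x^{2}-c_{1}x-c_{0})\bigr)_{(\mathfrak{m}_{R},x,y)},\qquad c_{i}\in R,$$
with $v_{R}(c_{0})\geq 1$ (as $\overline{g}(x_{0})=0$) and $v_{R}(c_{1})\geq 1$ (as $\overline{g}'(x_{0})=0$).

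\textbf{Base change to $k\alg$.} Set $R':=\Og_{C,x}\otimes_{k}k\alg=\Og_{C_{k\alg},\overline{x}}$, a discrete valuation ring in which a uniformiser $\pi$ of $R$ has valuation $[\kappa(x):k]=p^{n}\geq 2$ by Lemma \ref{Oramlem}; hence the images of $c_{0},c_{1}$ in $R'$ lie in $\mathfrak{m}_{R'}^{2}$. Let $q'$ be the unique point of $X_{k\alg}$ over $q$. Then $\Og_{X_{k\alg},q'}\cong\Lambda/(F)$, where $\Lambda:=R'[x,y]_{(\mathfrak{m}_{R'},x,y)}$ is regular local of dimension $3$, with regular system of parameters $(\pi',x,y)$ for $\pi'$ a uniformiser of $R'$, and $F=y^{2}-x^{3}-c_{2}x^{2}-c_{1}x-c_{0}$. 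Every term of $F$ lies in $\mathfrak{m}_{\Lambda}^{2}$: this is immediate for $y^{2}$, $x^{3}$ and $c_{2}x^{2}$, while $c_{0},c_{1}\in\mathfrak{m}_{R'}^{2}\subseteq\mathfrak{m}_{\Lambda}^{2}$ gives $c_{1}x\in\mathfrak{m}_{\Lambda}^{3}$ and $c_{0}\in\mathfrak{m}_{\Lambda}^{2}$. Thus $F\in\mathfrak{m}_{\Lambda}^{2}$ and $F\neq 0$, so $\Og_{X_{k\alg},q'}$ has embedding dimension $3$ but Krull dimension $2$ and is not regular. But $X$ is smooth over $k$, so $X_{k\alg}$ is regular — a contradiction. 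Therefore $\kappa(x)=k$.

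\textbf{Main obstacle.} The substantive point is to realise that the hypothesis actually being used is smoothness of $X$ over $k$, not regularity, and that it is detected only after the inseparable base change: over $R$ the model $W$ is genuinely regular, but because $\kappa(x)\neq k$ forces the ramification index of $R'/R$ to be at least $2$, the constant and linear terms of the local equation at $q$ — which are divisible by a uniformiser of $R$ — become second-order over $R'$, and regularity is lost. The remaining ingredients (that $\mathscr{E}$ equals the Weierstrass model, and that its singular point is $\kappa(x)$-rational) are routine by comparison.
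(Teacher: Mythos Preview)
Your proof is correct and takes a genuinely different route from the paper's. The paper argues uniformly via numerical invariants: using Proposition~\ref{generalminprop} to ensure the base-changed fibration is still minimal, it invokes Szyd\l o's formula $m=\nu-\epsilon$ (Lemma~\ref{Szydlo}) on both sides of the base change $\Og_{C,x}\subseteq\Og_{C_{k\alg},\overline{x}}$, observes that $m$ and $\epsilon$ are unchanged while $\nu$ gets multiplied by $[\kappa(x):k]$ (Lemma~\ref{Oramlem}), and concludes $[\kappa(x):k]=1$ since $\nu\neq 0$. Your argument instead splits off the multi-component case via Lemma~\ref{P1lem}, then in the remaining irreducible case identifies the minimal model with the Weierstrass model and shows by an explicit local computation that the ramification of $R'/R$ pushes the linear and constant terms of the Weierstrass equation into $\mathfrak{m}_{R'}^2$, so that $X_{k\alg}$ loses regularity at $q'$---contradicting smoothness of $X$ over $k$. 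Your approach is more elementary in that it avoids Tate's algorithm and Szyd\l o's tables, and it makes transparent the mechanism (inseparable base change destroys regularity of the Weierstrass model precisely because the ramification index exceeds~$1$); the paper's approach is more uniform in that it does not require the case split and packages the argument into a single invariant comparison. Both ultimately rest on Lemma~\ref{Oramlem} and on the smoothness of $X$ over $k$ rather than mere regularity, as you correctly emphasise.
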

\begin{proof}
Let $\overline{x}$ be the unique preimage of $x$ under the projection $C_{k\alg}\to C$, and let $\Og_{C,x}\subseteq \Og_{C_{k\alg},\overline{x}}$ be the corresponding extension of discrete valuation rings. We shall prove that the quantities $m$ and $\epsilon$ (see Lemma \ref{Szydlo}) do not change when passing from $k$ to $k\alg$, whereas $\nu$ is replaced by $[\kappa(x):k]\nu.$ This will force $\nu=0$ or $\kappa(x)=k$. More precisely, let $f_{k\alg}\colon X_{k\alg}\to C_{k\alg}$ be the induced elliptic surface over $k\alg.$ Let $X_{\Og_{C,x}}:=\Spec\Og_{C,x}\times_{C}X$ and $X_{\Og_{C_{k\alg}, \overline{x}}}:=\Spec \Og_{C_{k\alg},\overline{x}}\times_{C_{k\alg}} X_{k\alg}.$ Then there is a natural isomorphism 
\begin{align}X_{\Og_{C_{k\alg},\overline{x}}}=(\Spec \Og_{C_{k\alg},\overline{x}})\times_{\Spec \Og_{C,x}}X_{\Og_{C,x}}.\label{Xbasechange}\end{align} 

By Proposition \ref{generalminprop}, we know that $X_{\Og_{C,x}}$ and $X_{\Og_{C_{k\alg},\overline{x}}}$ are the minimal proper regular models of their respective generic fibres. Let $m, \epsilon, \nu$ be the quantities from Lemma \ref{Szydlo} associated with $X_{\Og_{C,x}}$ and let $m', \epsilon', \nu'$ be the analogous quantities for $X_{\Og_{C_{k\alg},\overline{x}}}.$ We then have $m = m'$, since $\Spec k\alg\to \Spec \kappa(x)$ is a universal homeomorphism.

Next, we observe that $X_{\Og_{C_{k\alg}, \overline{x}}}\to X_{\Og_{C,x}}$ is a homeomorphism. Hence there exists a unique open $U\subseteq X_{\Og_X}$ such that $U\times_{\Spec\Og_{C,x}}\Spec \Og_{C_{k\alg},\overline{x}}$ is the smooth locus of $X_{\Og_{C_{k\alg},\overline{x}}}$ over $\Og_{C_{k\alg},\overline{x}}.$ Because smoothness is fpqc-local on the target, it follows that $U$ is contained in the smooth locus of $X_{\Og_{C,x}}$ over $\Og_{C,x}.$ On the other hand, this locus is clearly contained in $U$, so we have an equality. Together with \cite[Theorem 5.4]{Con}, this implies that $\epsilon=\epsilon'.$ 

Finally, let us denote by $(W, i)$ a minimal Weierstrass model of the generic fibre of $X_{\Og_{C,x}}.$ After choosing an equation for $W$ as in Definition \ref{Weierstrassdef}, we may calculate the corresponding discriminant $\Delta\in \Og_{C,x}$. Clearly, $W\times_{\Spec\Og_{C,x}}\Spec \Og_{C_{k\alg},\overline{x}}$ is still a Weierstrass model of the generic fibre of $X_{\Og_{C_{k\alg},\overline{x}}}$. Our chosen equation gives an equation for $W\times_{\Spec\Og_{C,x}}\Spec \Og_{C_{k\alg},\overline{x}}$ as well, and the corresponding discriminant is simply the image of $\Delta$ in $\Og_{C_{k\alg},\overline{x}}$. It now follows from \cite[Corollary 4.7]{Con} that there is a proper birational map $X_{\Og_{C,x}}\to W$ which respects the identifications at the generic fibre. Hence there also exists a map $X_{\Og_{C_{k\alg},\overline{x}}}\to W\times_{\Spec\Og_{C,x}}\Spec \Og_{C_{k\alg},\overline{x}}$ with the same properties. Lemma \ref{Wminimallem}  now tells us that $W\times_{\Spec\Og_{C,x}}\Spec\Og_{C_{k\alg},\overline{x}}$ is a minimal Weierstrass model. Using Lemma \ref{Oramlem}, we conclude that $\nu'=[\kappa(x):k]\nu.$ This forces $\nu=0$ or $[\kappa(x):k]=1$. Since the former would contradict the assumption that $f^{-1}(x)$ be singular (see Lemma \ref{DeltaElem}), we conclude that $\kappa(x)=k$. 
\end{proof}\\
$\mathbf{Remark}.$ The requirement that $f\colon X\to C$ be minimal is, in fact, redundant. Indeed, if $f$ is not minimal and $Z\subseteq X$ is an exceptional curve contained in some fibre of $f$, then $Z\cong \Ps^1_k$. Indeed, we know that $Z\cong \Ps^1_{k_Z}$ for some finite extension $k_Z$ of $k$, and the proof of Proposition \ref{generalminprop} shows that $k_Z$ is separable over $k$. This shows that if $\tilde{f}\colon X^{\mathrm{min}}\to C$ is the minimal proper regular model of the generic fibre of $f$ and $x\in C$ is a closed point such that the canonical morphism $X\to X^{\mathrm{min}}$ does not induce an isomorphism of fibres above $x$ then $\kappa(x)=k.$ In particular, if $\tilde{f}^{-1}(x)$ is smooth but $f^{-1}(x)$ is not, then $\kappa(x)=k.$\\
\\
$\mathbf{Remark.}$ Let $\eta$ be the generic point of $C.$ Theorem \ref{separablethm} can be interpreted as a statement about elliptic curves over $\kappa(\eta)$. Indeed, if $E$ is such an elliptic curve, we can construct the minimal proper regular model $\mathscr{E}\to C$ of $E$. If the ground field is not perfect, there is nothing which guarantees that $\mathscr{E}\to \Spec k$ be smooth. The previous result gives us a method to construct examples where this is not the case. Let $p>3$ be the characteristic of $k$ and assume that $a\in k$ is not a $p$-th power. Let $C=\Ps^1_k$ and let $E$ be the elliptic curve given by the minimal Weierstrass equation $y^2=x^3+(t^p-a)$. Clearly, $E$ has reduction of type $II$ at the closed point $x$ of $\mathbf{A}^1_k\subseteq \Ps^1_k$ given by the ideal $\langle t^p-a\rangle \subseteq k[t].$ Since the residue field $\kappa(x)=k(a^{1/p})$ is not separable over $k$, Theorem \ref{separablethm} tells us that $\mathscr{E}$ is not smooth over $k$. 

\subsection{Global Weierstrass models}
In this section, we denote by $k$ an arbitrary field, and by $f\colon X\to C$ a minimal elliptic surface over $k$, equipped with a fixed section $\sigma\colon C\to X$. As before, we denote by $\eta$ the generic point of $C$. We then have the following.
\begin{proposition}
There exists a unique pair $(W, \pi)$, with $w:W\to C$ projective and $\pi\colon X\to W$ birational, such that for each closed point $x \in C$, the pair $(W\times_{C}\Spec \Og_{C,x}, \pi_\eta)$ is a minimal Weierstrass model of $X\times_{C} \Spec \kappa(\eta)$. Furthermore, the scheme $W$ is normal, and the fibres of $W \to C$ are geometrically integral and have at most one geometric singularity. \label{globalweierstrassprop}

\end{proposition}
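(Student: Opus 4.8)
The plan is to construct $W$ by the classical recipe producing a Weierstrass model from a section of $f$, and then to recognise, \emph{a posteriori}, that its localisations at the closed points of $C$ are minimal Weierstrass models by appealing to Lemma \ref{Wminimallem}. Write $S := \sigma(C) \subseteq X$ for the image of the section, an effective Cartier divisor on the regular surface $X$; it meets every fibre of $f$ in a single point lying on the smooth locus of a multiplicity-one component. Since $f$ is flat and proper of relative dimension one with Gorenstein fibres of arithmetic genus $1$, one has $\R^1 f_\ast \Og_X(nS) = 0$ for $n \geq 1$, so that $f_\ast \Og_X(nS)$ is locally free of rank $n$ on $C$; setting $\mathcal{L} := (\R^1 f_\ast \Og_X)^{\vee}$ one gets $f_\ast \Og_X(2S) \cong \Og_C \oplus \mathcal{L}^{\otimes -2}$ and $f_\ast \Og_X(3S) \cong \Og_C \oplus \mathcal{L}^{\otimes -2} \oplus \mathcal{L}^{\otimes -3}$, and the multiplication maps among these sheaves yield a Weierstrass equation with coefficients $a_i \in H^0(C, \mathcal{L}^{\otimes i})$, hence a closed subscheme $W \subseteq \Ps_C(\mathcal{E})$ of a $\Ps^2$-bundle over $C$; in particular $W \to C$ is projective. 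One checks, exactly as over an algebraically closed base, that the relative linear system $|3S|$ is base-point-free --- on the component met by $\sigma$ the restriction map from global sections is surjective, and on every other fibral component $\Og_X(3S)$ restricts to the trivial bundle and is generated by the constant section. It therefore defines a $C$-morphism $\pi \colon X \to W$, which is an isomorphism over the generic point $\eta$ (there $W_\eta$ is just the Weierstrass cubic of the elliptic curve $X_\eta$) and contracts precisely the fibral components of $f$ disjoint from $\sigma$; in particular $\pi$ is birational. The one point to watch here, by contrast with the classical treatment, is that all of the above are cohomological assertions about $f$ and its fibres, insensitive to extension or restriction of $k$, so we need not assume $k$ perfect or separably closed.

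For the remaining assertions, fix a closed point $x \in C$ and put $R := \Og_{C,x}$. Since $\mathcal{L}$ is trivial on $\Spec R$, the bundle becomes $\Ps^2_R$, and $(W_R, \pi_\eta)$, with $W_R := W \times_C \Spec R$, is a planar integral Weierstrass model of $X_\eta$ in the sense of Definition \ref{Weierstrassdef}. Because $f$ is minimal, Proposition \ref{Castelnuovoprop} shows that $X_R := X \times_C \Spec R$ is the minimal proper regular model of $X_\eta$, and $\pi$ base-changes to a morphism $X_R \to W_R$; hence Lemma \ref{Wminimallem} shows that $(W_R, \pi_\eta)$ is a \emph{minimal} Weierstrass model, which is exactly the local statement in the Proposition. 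Normality of $W$ now follows: the generic fibre $X_\eta$ is smooth, and every point of $W$ lying in a closed fibre has the same local ring in $W$ as in $W_R$, which is normal by \cite[Lemma 2.3]{Con}. The same reference shows that each fibre of $W \to C$ is a geometrically integral plane cubic, and an integral plane cubic --- as well as its base change to an algebraic closure of the residue field --- has at most one singular point; this yields the statement about the fibres.

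For uniqueness, let $(W', \pi')$ be a second pair with the stated property. For each closed point $x$ the schemes $W_R$ and $W'_R$ are both minimal Weierstrass models of $X_\eta$ over $R$, hence identified by a unique isomorphism of models, which is automatically compatible with $\pi_\eta$ and $\pi'_\eta$. Each such isomorphism spreads out over some open neighbourhood of $x$ in $C$, and any two of them agree on the overlap because both induce the unique isomorphism of minimal Weierstrass models at the local ring of each point of the overlap; they therefore glue to an isomorphism $W \xrightarrow{\sim} W'$ intertwining $\pi$ and $\pi'$. The genuine obstacle in this argument is the construction in the first paragraph --- showing that $|3S|$ is relatively base-point-free, so that $\pi$ is an honest morphism and not merely a rational map, and that $W$ is globally cut out over $C$ by a single Weierstrass equation. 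Once $W$ and $\pi$ are available, everything reduces to the already-established local results, chiefly Proposition \ref{Castelnuovoprop} and Lemma \ref{Wminimallem}.
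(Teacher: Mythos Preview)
Your argument is correct and takes a genuinely different route from the paper's. You build $W$ globally, as the image of $X$ under the relative linear system $|3S|$ inside a $\Ps^2$-bundle over $C$ --- essentially carrying out the construction of \cite[Theorem~2.8]{Con} over the whole curve rather than over a single discrete valuation ring --- and then invoke Lemma~\ref{Wminimallem} to recognise the localisations as minimal Weierstrass models. The paper instead assembles $W$ from local data by fpqc descent: over the smooth locus $U\subseteq C$ one takes $W_0:=f^{-1}(U)$, over each bad point $x_j$ one takes the local minimal Weierstrass model $W_j\to\Spec\Og_{C,x_j}$, and one observes that these, together with the relatively ample line bundles $\Og_{W_j}(\sigma_j)$, patch over $\Spec\kappa(\eta)$ to give an effective descent datum for the cover $U\sqcup\coprod_j\Spec\Og_{C,x_j}\to C$ (via \cite[\S6.1, Theorem~7]{BLR}). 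Your approach is more explicit and immediately exhibits $W$ as a global Weierstrass hypersurface; its cost is the verification that $\R^1 f_\ast\Og_X(nS)=0$ and that $|3S|$ is base-point-free along singular fibres, for which the minimality of $f$ (through the triviality of $\omega_{X_x}$) is essential and deserves a word of justification. The paper's approach offloads all local analysis to \cite{Con} and needs only the descent formalism. Both proofs finish identically: Lemma~\ref{Wminimallem} for minimality, \cite[Lemma~2.3]{Con} for normality and the fibre claims, and uniqueness via the uniqueness of local minimal Weierstrass models.
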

\begin{proof}
Let $x_1,\dots, x_n$ be the closed points of $C$ above which the fibre of $f$ is singular, and let $U:=C\setminus \{x_1,\dots, x_n\}.$ For each $j=1,\dots, n,$ let $w_j\colon W_j\to \Spec \Og_{C,x_j}$ be the minimal Weierstrass model of $X_\eta$, and let $W_0:=f^{-1}(U)$. The section $\sigma\colon C\to X$ induces sections $\sigma_j\colon \Spec \Og_{C,x_j}\to W_j$ and $\sigma_0\colon U\to W_0$. By abuse of notation, we shall denote the images of the sections by the same symbol as the sections themselves. 

The line bundles $\mathscr{L}_j:=\Og_{W_j}(\sigma_j)$ are relatively ample; this follows from the proof of \cite[Theorem 2.8]{Con} for $j=1,\dots, n$ and is well-known for $j=0$. Let $U_0,\dots,U_n$ denote the schemes $U,\Spec \Og_{C,x_1},\dots,\Spec\Og_{C,x_n}$ respectively. For each pair of integers $0\leq i,j\leq n$, denote by $U_{ij}$ the scheme $U_i\times_{C} U_j.$ As soon as $i\not=j,$ there is a canonical isomorphism
$$U_{ij}=\Spec \kappa(\eta),$$ which means that, in this case, $W_i\times_{C}U_{ij}$ can be canonically identified with $X_\eta$, and the pullback of $\mathscr{L}_j$ to this scheme is canonically isomorphic to $\Og_{X_{\eta}}(\sigma_{\eta}).$ Hence we obtain a descent datum of pairs $(W_i, \mathscr{L}_i)$ with respect to the fpqc covering $U_0\sqcup U_1\sqcup \cdots \sqcup U_n\to C$. By \cite[\S6.1, Theorem 7]{BLR}, this descent datum is effective, which settles the existence part.

 For the uniqueness, we simply observe that Lemma \ref{Wminimallem} implies that the $X\times_{C}\Spec \Og_{C,x}$ are the minimal Weierstrass models of their respective generic fibres. Therefore the descent datum obtained from $W\to C$ with respect to the fpqc covering described above is canonically isomorphic to the one we used previously to prove existence.

To see that $W$ is normal, let $t$ be a (set-theoretic) point of $W.$ If $t$ maps to $U_0,$ then $W$ is smooth at $t$. If $t$ maps to one of the $x_j,$ it suffices to observe that the $W_j$ are normal \cite[Lemma 2.3]{Con}. The last claim follows from \cite[Lemma 2.3]{Con}
\end{proof}\\
We shall refer to $W$ as \it the Weierstrass model of $f\colon X\to C$. \rm 
\begin{proposition}
Let $w\colon W\to C$ be the Weierstrass model of $X.$ Let $x_1,\dots, x_r$ be the closed points of $C$ above which the fibre of $f$ has at least two irreducible components. For $j=1,\dots,r$, there exists a unique closed point $z_j$ of $W$ above $x_j$ where $W$ is singular, and the extension $k\subseteq \kappa(z_j)$ is purely inseparable. If $\mathrm{char}\, k\not\in \{2,3\},$ then $\kappa(z_j)=k$ for all $j.$

Furthermore, the map $\pi\colon X\to W$ is an isomorphism above $W\setminus \{z_1,\dots, z_r\}$, and $\pi^{-1}(z_j)$  is set-theoretically the union of the irreducible components of $f^{-1}(x_j)$ which do not meet $\sigma$. 
\end{proposition}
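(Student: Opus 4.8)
The plan is to analyse the Weierstrass model $w\colon W\to C$ one singular fibre at a time, using the local description of $W$ near a singular point of a Weierstrass fibre together with the results of the previous subsections. First I would fix one of the points $x_j$ and pass to the local ring $\Og_{C,x_j}$, writing $W_j:=W\times_C\Spec\Og_{C,x_j}$, which by Proposition \ref{globalweierstrassprop} is the minimal Weierstrass model of $X_\eta$ over $\Og_{C,x_j}$. Since the fibre $f^{-1}(x_j)$ has at least two irreducible components, Lemma \ref{Szydlo} gives $m\geq 2$, hence $\nu\geq 2$, so by Lemma \ref{DeltaElem} the central fibre $(W_j)_{x_j}$ is not smooth. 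On the other hand $(W_j)_{x_j}$ is a plane cubic, hence geometrically integral with at most one geometric singular point (Proposition \ref{globalweierstrassprop}); the complement of that singular point is smooth over the residue field. This pins down $z_j$ as the unique non-smooth point of $W$ in the fibre over $x_j$, and shows $W$ is smooth over $C$ away from $z_1,\dots,z_r$ (over $U_0$ this is clear, and over the remaining $x_i$ with $i\le n$ the Weierstrass fibre is still a cubic with at most one singular point, which may or may not give rise to one of the $z_j$; but those with $m=1$ have $\nu=1$, an additive fibre whose Weierstrass model is already regular — wait, one must be careful here, so let me instead argue directly that the $z_j$ are exactly the points where $\pi$ is not an isomorphism).

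The cleaner route, which I would actually follow, is: by Lemma \ref{Wminimallem} each $X\times_C\Spec\Og_{C,x}$ is the minimal Weierstrass model's minimal regular resolution, i.e. $\pi$ exhibits $X\to W$ as the minimal regular resolution of $W$. A proper birational morphism of regular surfaces is an isomorphism away from finitely many points of the target, and the points where it fails to be an isomorphism are precisely the non-regular points of $W$ (a normal surface singularity must be resolved, while $\pi$ is an isomorphism over the regular locus by the universal property, cf. \cite[Corollary 4.7]{Con}). Since $W$ is normal with geometrically integral fibres having at most one geometric singularity, its non-regular locus is a finite set of closed points, one in each fibre that is resolved nontrivially; and a fibre is resolved nontrivially exactly when $\pi^{-1}$ of that point is positive-dimensional, equivalently when $f^{-1}(x)$ acquires extra components beyond the strict transform of the Weierstrass cubic, i.e. when $f^{-1}(x)$ has $\geq 2$ components. (For the remaining singular fibres, those of type $I_1$ or $II$, the Weierstrass cubic is already regular as a scheme even though singular as a $C$-scheme, so $W=X$ locally there — this is exactly the content of $m=1\Leftrightarrow\nu=1$ together with $W$ regular.) Hence the non-regular locus of $W$ is exactly $\{z_1,\dots,z_r\}$, one point $z_j$ over each $x_j$, and $\pi$ is an isomorphism over $W\setminus\{z_1,\dots,z_r\}$.

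For the residue-field statement: $z_j$ is the unique geometric singularity of the cubic $(W_j)_{x_j}$, hence it is defined over the perfect (indeed algebraic) closure of $\kappa(x_j)$ and is fixed by $\Gal$, so $\kappa(z_j)$ is purely inseparable over $\kappa(x_j)$; and by Theorem \ref{separablethm}, since $f^{-1}(x_j)$ is singular we already have $\kappa(x_j)=k$, so $\kappa(z_j)$ is purely inseparable over $k$. When $\mathrm{char}\,k\notin\{2,3\}$ one upgrades this to $\kappa(z_j)=k$: run the base-change argument of Theorem \ref{separablethm} (or Proposition \ref{generalminpropII}) to reduce to $k$ separably closed, where Lemma \ref{P1lem} applies to the components of $f^{-1}(x_j)$ — in characteristic $>3$ a plane cubic with a rational singular point has that singularity rational over the base field, because the node/cusp and its tangent directions are cut out by equations defined over $\kappa(x_j)=k$ and $k$ is separably closed; alternatively, blowing up $z_j$ produces a component of $f^{-1}(x_j)$ which is $\Ps^1_{\kappa(z_j)}$, and Lemma \ref{P1lem} forces $\kappa(z_j)=k$. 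Finally, $\pi^{-1}(z_j)$ is set-theoretically the union of the components of $f^{-1}(x_j)$ not meeting $\sigma$: the section $\sigma$ lands in the smooth locus of $W\to C$ (it passes through $[0:1:0]$, away from the singular point of the cubic), so $\sigma$ avoids $z_j$; the component of $f^{-1}(x_j)$ meeting $\sigma$ is therefore the strict transform of $(W_j)_{x_j}$ and maps isomorphically to it, while $\pi$ contracts precisely the remaining components into $z_j$ since $\pi$ is an isomorphism over the complement of $z_j$. The main obstacle is the characteristic hypothesis step — ensuring the unique cubic singularity is rational over $k$ rather than over a purely inseparable extension — which is where $\mathrm{char}\,k\notin\{2,3\}$ and Lemma \ref{P1lem} do the real work.
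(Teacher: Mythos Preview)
Your approach is essentially the paper's: both identify $\pi\colon X\to W$ as the minimal regular resolution (via Lemma~\ref{Wminimallem} and \cite[Corollary~4.7]{Con}), use that a minimal resolution is an isomorphism exactly over the regular locus, and then locate the non-regular points of $W$ as the unique non-smooth point of each Weierstrass fibre over an $x_j$. The paper simply defers the residue-field statement and the description of $\pi^{-1}(z_j)$ to \cite{Con}, whereas you argue them directly via Lemma~\ref{P1lem}, Theorem~\ref{separablethm}, and the section through $[0:1:0]$; just be aware that those two results assume $k$ separably closed, which is not a standing hypothesis in this subsection, so your reduction step to that case is genuinely needed (and the claim $\kappa(z_j)/k$ purely inseparable, rather than $\kappa(z_j)/\kappa(x_j)$, already uses it).
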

\begin{proof}
We already know that $X_{\Og_{C,x}}\to W_x=W\times_{C}\Spec \Og_{C,x}$ is a minimal regular resolution for all closed points $x\in C$. In particular, if $f^{-1}(x)$ is irreducible, then $W_x$ must be regular. On the other hand, one can check easily (using equations) that $w^{-1}(x_j)$ has precisely one point $z_j$ which is not smooth over $\kappa(x_j)$. Clearly, $W_{x_j}$ is regular at all other points. If $W$ were regular at $z_j$, then the morphism $X_{\Og_{C,x_j}}\to W_{x_j}$ would have to be an isomorphism, which contradicts our assumption on $f^{-1}(x_j)$. The morphism $X\to W$, being a minimal regular resolution, is an isomorphism above the regular locus of $W$. The remaining claims follow from \cite{Con}, more precisely from the proof of Corollary 4.7 and the discussion after the proof of Theorem 5.5 of \it loc. cit.\rm
\end{proof}
\begin{lemma}
Let $W\to C$ be the Weierstrass model of $X.$ Then $W\times_{\Spec k}\Spec k\alg$ is the Weierstrass model of $\overline{f}\colon X_{k\alg}\to C_{k\alg}.$ \label{WBasechangelem}
\end{lemma}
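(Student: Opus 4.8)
The plan is to show that the base change of the pair $(W,\pi)$ of Proposition \ref{globalweierstrassprop} along $\Spec k\alg\to\Spec k$ satisfies the characterising property of the Weierstrass model of $\overline f$, and then to conclude by the uniqueness clause of that proposition. First, by Proposition \ref{generalminprop} the fibration $\overline f\colon X_{k\alg}\to C_{k\alg}$ is again a minimal elliptic surface, now over the algebraically closed field $k\alg$, so Proposition \ref{globalweierstrassprop} does apply to it and produces a unique pair $(W',\pi')$, which by definition is the Weierstrass model of $\overline f$. Writing $W_{k\alg}:=W\times_{\Spec k}\Spec k\alg$ and $\pi_{k\alg}:=\pi\times_{\Spec k}\Spec k\alg$, the morphism $W_{k\alg}\to C_{k\alg}$ is projective (a base change of the projective $w\colon W\to C$), while $\pi_{k\alg}\colon X_{k\alg}\to W_{k\alg}$ is a base change of the isomorphism $\pi_\eta$ on generic fibres and of the isomorphism exhibited away from the finitely many singular points of $W$, hence is an isomorphism over a dense open of $W_{k\alg}$, i.e.\ birational. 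So everything reduces to the local minimality condition: for each closed point $\overline x$ of $C_{k\alg}$, the pair $(W_{k\alg}\times_{C_{k\alg}}\Spec\Og_{C_{k\alg},\overline x},\,\pi_{k\alg,\overline\eta})$ is a minimal Weierstrass model of the generic fibre $X_{k\alg,\overline\eta}$.

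Fix such an $\overline x$, lying above the closed point $x\in C$, and put $W_x:=W\times_C\Spec\Og_{C,x}$ and $X_{\Og_{C,x}}:=X\times_C\Spec\Og_{C,x}$. Exactly as in the proof of Theorem \ref{separablethm} (see \eqref{Xbasechange}), the identification $\Og_{C,x}\otimes_k k\alg=\Og_{C_{k\alg},\overline x}$ recalled at the start of the section gives
\[
W_{k\alg}\times_{C_{k\alg}}\Spec\Og_{C_{k\alg},\overline x}\;\cong\;W_x\otimes_{\Og_{C,x}}\Og_{C_{k\alg},\overline x},
\]
and likewise with $X$ in place of $W$. Since $(W_x,\pi_\eta)$ is by hypothesis a minimal Weierstrass model of $X_\eta$ over $\Og_{C,x}$, a chosen Weierstrass equation for $W_x\subseteq\Ps^2_{\Og_{C,x}}$ remains a Weierstrass equation after extension of scalars; hence $W_x\otimes_{\Og_{C,x}}\Og_{C_{k\alg},\overline x}$ is a Weierstrass model of $X_{k\alg,\overline\eta}$, with the identification on generic fibres inherited from $\pi_\eta$. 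It remains only to prove this Weierstrass model is minimal.

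For this I would invoke Lemma \ref{Wminimallem}. The residue field of the discrete valuation ring $\Og_{C_{k\alg},\overline x}$ is $k\alg$, which is algebraically closed of characteristic $\notin\{2,3\}$, so we are in the situation of Lemma \ref{Szydlo}. By Proposition \ref{generalminprop} the surface $X_{k\alg}\to C_{k\alg}$ is minimal, so Castelnuovo's criterion (Proposition \ref{Castelnuovoprop}) shows that $X_{k\alg}\times_{C_{k\alg}}\Spec\Og_{C_{k\alg},\overline x}\cong X_{\Og_{C,x}}\otimes_{\Og_{C,x}}\Og_{C_{k\alg},\overline x}$ is the minimal proper regular model of $X_{k\alg,\overline\eta}$ over $\Og_{C_{k\alg},\overline x}$. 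Moreover, the canonical morphism of models $X_{\Og_{C,x}}\to W_x$, which exhibits $X_{\Og_{C,x}}$ as the minimal regular resolution of the minimal Weierstrass model $W_x$, base changes to a morphism of models $X_{\Og_{C,x}}\otimes_{\Og_{C,x}}\Og_{C_{k\alg},\overline x}\to W_x\otimes_{\Og_{C,x}}\Og_{C_{k\alg},\overline x}$. Lemma \ref{Wminimallem} then yields that $W_x\otimes_{\Og_{C,x}}\Og_{C_{k\alg},\overline x}$ is a minimal Weierstrass model, completing the verification of the local minimality condition. Uniqueness in Proposition \ref{globalweierstrassprop}, applied to $\overline f$, now forces $(W_{k\alg},\pi_{k\alg})=(W',\pi')$, and in particular $W_{k\alg}=W'$, as claimed.

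I do not expect a serious obstacle: the argument is essentially the bookkeeping of compatible base-change identifications, after which Lemma \ref{Wminimallem} does the work. The one point requiring care is that passing to $\Og_{C_{k\alg},\overline x}$ preserves the property of being the minimal proper regular model of the generic fibre — this is exactly what Proposition \ref{generalminprop} together with Castelnuovo's criterion guarantees — and, relatedly, that the resolution morphism $X_{\Og_{C,x}}\to W_x$ survives base change as a morphism of models; granting these, the hypotheses of Lemma \ref{Wminimallem} are met at every $\overline x$ and the conclusion follows.
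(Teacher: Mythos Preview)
Your proof is correct and follows essentially the same approach as the paper: verify that the localisations of $W_{k\alg}$ at closed points of $C_{k\alg}$ are Weierstrass models (via equations), use the base-changed map $X_{k\alg}\to W_{k\alg}$ together with Lemma \ref{Wminimallem} (which in turn needs Proposition \ref{generalminprop} to know $X_{k\alg}$ is minimal) to upgrade these to \emph{minimal} Weierstrass models, and conclude by the uniqueness clause of Proposition \ref{globalweierstrassprop}. Your write-up makes the dependence on Proposition \ref{generalminprop} and Castelnuovo's criterion more explicit than the paper's terse proof, but the logic is the same.
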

\begin{proof}
Clearly $W_{k\alg}\to C_{k\alg}$ still has the property that its localisations at closed points of $C_{k\alg}$ are Weierstrass models of their generic fibres; this can be checked using equations. Furthermore, we still have a birational morphism $X_{k\alg}\to W_{k\alg}$ which commutes with the projections to $C_{k\alg}.$ Hence Lemma \ref{Wminimallem} in combination with Proposition \ref{globalweierstrassprop} implies the result. 
\end{proof}

\section{Tameness of the discriminant locus}
Now let $K$ be a complete discretely valued field with ring of integers $\Og_K$ and algebraically closed residue field of characteristic not dividing 6. Let $f\colon X\to C$ be a minimal elliptic surface over $K.$ We keep the notation from the previous sections; in particular, $w\colon W\to C$ will be the global Weierstrass model of $X$, $\pi\colon X\to W$ the corresponding projection, and for a closed point $x\in C,$ we let $X_{\Og_{C,x}}:=X\times_{C} \Spec \Og_{C,x},$ and $W_x:=W\times_{C}\Spec \Og_{C,x}.$ An immediate consequence of our previous work is the following
\begin{proposition}
Let $x$ be a closed point of $C$ such that the fibre of $f$ above $x$ is singular. Then the extension $K\subseteq \kappa(x)$ is separable. \label{separableprop}
\end{proposition}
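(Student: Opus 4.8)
The plan is to deduce this from Theorem \ref{separablethm} by base change to the separable closure. First I would pass from $f\colon X\to C$ to $f_{K\sep}\colon X_{K\sep}\to C_{K\sep}$. This is again a minimal elliptic surface: minimality is Proposition \ref{generalminpropII}, while the remaining conditions in the definition of an elliptic surface (smoothness, projectivity and geometric integrality of source and base, the equality $\dim H^1(X_\eta,\Og_{X_\eta})=1$ for the generic fibre, and the existence of a section) are all stable under the field extension $K\subseteq K\sep$. Moreover $K\sep$ is separably closed of characteristic not in $\{2,3\}$: its characteristic equals $\mathrm{char}\,\kappa$ when $\mathrm{char}\,K>0$, and is $0$ otherwise, so in either case the standing hypothesis $\mathrm{char}\,\kappa\notin\{2,3\}$ gives what we need.

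Next I would locate a closed point over $x$. Since $C_{K\sep}\to C$ is faithfully flat, it is surjective, so there is a (necessarily closed) point $y\in C_{K\sep}$ lying over $x$, and the morphism $\Spec\kappa(y)\to C$ factors through $\Spec\kappa(x)$, giving a $K$-embedding $\kappa(x)\hookrightarrow\kappa(y)$. A routine fibre-product manipulation then identifies the fibre of $f_{K\sep}$ over $y$ with $f^{-1}(x)\times_{\kappa(x)}\kappa(y)$. By hypothesis $f^{-1}(x)$ is not smooth over $\kappa(x)$; since smoothness descends along the faithfully flat ring extension $\kappa(x)\subseteq\kappa(y)$, the fibre of $f_{K\sep}$ over $y$ is not smooth over $\kappa(y)$, i.e. it is singular.

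Now Theorem \ref{separablethm}, applied to $f_{K\sep}$ at the point $y$, yields $\kappa(y)=K\sep$. Combined with the embedding $\kappa(x)\hookrightarrow\kappa(y)$ from the previous step, this shows that $\kappa(x)$ embeds over $K$ into $K\sep$, which is exactly the assertion that $K\subseteq\kappa(x)$ is separable. There is no genuine obstacle beyond the two results already cited; the only points that deserve a line of justification are the identification of the fibre of $f_{K\sep}$ over $y$ (using that $y$ maps to $x$) and the fpqc descent of non-smoothness along $\kappa(x)\subseteq\kappa(y)$.
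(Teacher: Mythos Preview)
Your argument is correct and is exactly the approach the paper takes: it records the proof as ``an immediate consequence of Theorem \ref{separablethm} and Proposition \ref{generalminpropII}'', and your write-up simply unpacks this by passing to $K\sep$, preserving minimality via Proposition \ref{generalminpropII}, and then invoking Theorem \ref{separablethm} at a point $y$ over $x$.
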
 
\begin{proof}
This is an immediate consequence of Theorem \ref{separablethm} and Proposition \ref{generalminpropII}.
\end{proof}\\
We shall now introduce the cohomological conditions that will later be shown to guarantee logarithmic good reduction up to modification. First choose an algebraic closure $K\alg,$ and let $K\sep$ be the separable closure of $K$ in $K\alg.$ Then the group $\Gal(K\sep/K)$ acts on $K\alg$ in a canonical way. Furthermore, for any $\ell$ invertible in $\Og_K,$ the $\Q_\ell$-vector spaces $H^i(X_{K\sep}, \Q_\ell)$ and $H^i(X_{K\alg}, \Q_\ell)$ are canonically identified; the same is true for cohomology with coefficients $\Z/\ell^n\Z$ (this follows from the topological invariance of \'etale cohomology). In particular, we obtain representations of $\Gal(K\sep/K)$ on $H^i(X_{K\alg}, \Q_\ell)$ (resp. $H^i(X_{K\alg}, \F_\ell)$). These coincide with the (more commonly used) representations on $H^ i(X_{K\sep}, \Q_\ell)$ (resp. $H^i(X_{K\sep}, \F_\ell)$), but our definition will turn out to be more convenient for our purposes. \\
Let $f'\colon X'\to C$ be an elliptic surface over $K$ which is birational to $X,$ and which has the property that \it each singular fibre of $f'$ has at least two geometric irreducible components. \rm As such models will feature repeatedly in what follows, we make the following
\begin{definition}
An elliptic fibration $f'\colon X'\to C$ over $K$ is called \rm big \it if for each closed point $x$ of $C$ such that $f'$ is not smooth above $x,$ the geometric fibre of $f'$ above $x$ has at least two components. 
\end{definition}
Note that each (not necessarily minimal) elliptic surface $f\colon X\to C$ admits a projective modification $X'\to X$ which is big; to construct one, simply blow up the point hit by the section in each singular fibre of $f$ which has only one geometric irreducible component. Note that the additional component in the singular fibres constructed this way have multiplicity one and are geometrically integral. We can now formulate the first cohomological condition:
\begin{itemize}
\item Condition 1: \it There exists a big elliptic fibration $f'\colon X'\to C,$ birational (over $C$) to $X,$ such that for all $i\geq 0,$ the Galois representations $H^i(X'_{K\alg}, \Q_\ell)$ are tamely ramified. \label{Condition1}
\end{itemize}
To formulate the second cohomological condition, we shall first study the sheaf $\R^1f_\ast \F_\ell$. After proving some auxiliary lemmata, we shall see that this sheaf is canonically isomorphic to the $\ell$-torsion subsheaf of the identity component of the N\'eron model $\mathscr{M}$ of the generic fibre of $f.$ 
\begin{lemma}
The functor $\Pic^0_{W/C}$ is representable by a smooth group scheme of finite presentation over $C$ with connected fibres, and the canonical morphism $\Pic^0_{W/C}\to \mathscr{M}^0$ is an isomorphism. The same statements hold if we replace $W$ by $W_{K\alg}$ and $C$ by $C_{K\alg}.$ 
\end{lemma}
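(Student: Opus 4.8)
The plan is to identify $\Pic^0_{W/C}$ with the $C$-smooth locus $W^{\mathrm{sm}}$ of $w\colon W\to C$, and then to match $W^{\mathrm{sm}}$ with $\mathscr{M}^0$ using the structure theory of minimal Weierstrass models. Since $w$ is projective and flat with geometrically integral fibres and has a section, the relative Picard functor $\Pic_{W/C}$ is representable by a separated $C$-scheme, locally of finite presentation, by \cite[\S 8.2, Theorem 1]{BLR}; write $\Pic^0_{W/C}$ for the subfunctor of fibrewise identity components. The image of $\sigma$ lies in $W^{\mathrm{sm}}$, since $[0:1:0]$ is a smooth point of every Weierstrass cubic, so there is a canonical Abel--Jacobi $C$-morphism $W^{\mathrm{sm}}\to\Pic^0_{W/C}$ sending a point $p$ to the class of $\Og(p)\otimes\Og(\sigma)^{-1}$ on the fibre through $p$. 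Fibrewise this is the classical identification of the smooth locus of a geometrically integral proper curve of arithmetic genus one carrying a rational smooth point with its $\Pic^0$; hence the Abel--Jacobi morphism is an isomorphism. It follows that $\Pic^0_{W/C}\cong W^{\mathrm{sm}}$, which is smooth over $C$ by construction, of finite presentation (as $C$ is Noetherian), and has geometrically connected fibres: the smooth locus of a geometrically integral fibre of $w$, which has at most one geometric singularity by Proposition \ref{globalweierstrassprop}, is a dense open subscheme, hence still geometrically irreducible.

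The next step is to produce the comparison morphism. The Néron model $\mathscr{M}$ over the Dedekind scheme $C$ of the elliptic curve $E:=X_\eta$ exists, by gluing the Néron models over an affine open cover of $C$. Since $W^{\mathrm{sm}}=\Pic^0_{W/C}$ is smooth and separated over $C$ with generic fibre $E$, the Néron mapping property yields a unique $C$-morphism $u\colon W^{\mathrm{sm}}\to\mathscr{M}$ restricting to $\mathrm{id}_E$; by uniqueness $u$ is a homomorphism of $C$-group schemes, and since its source has connected fibres and it preserves unit sections, it factors through $\mathscr{M}^0$. This $u$ is the canonical morphism of the statement, and it remains to show that $u$ is an isomorphism onto $\mathscr{M}^0$; since both sides are compatible with the localisations $\Spec\Og_{C,x}\to C$, this can be checked locally.

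So fix a closed point $x\in C$ and set $R:=\Og_{C,x}$. By Proposition \ref{globalweierstrassprop} and \cite{Con}, $W_x:=W\times_C\Spec R$ is the minimal Weierstrass model of $E$ over $R$ and $X_{\Og_{C,x}}\to W_x$ is its minimal regular resolution, hence an isomorphism over the regular locus of $W_x$, in particular over $W_x^{\mathrm{sm}}$. By \cite[Theorem 5.4]{Con} (compare the proof of Lemma \ref{DeltaElem}), $X_{\Og_{C,x}}^{\mathrm{sm}}$ is the Néron model $\mathscr{M}_x$ of $E$ over $R$, so $W_x^{\mathrm{sm}}$ is identified with an open subgroup scheme of $\mathscr{M}_x$ whose generic fibre is $E$. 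Its special fibre is the smooth locus of the Weierstrass cubic $W_{x,s}$, which is connected (again because $W_{x,s}$ is geometrically integral with at most one geometric singularity); an open subgroup scheme of $\mathscr{M}_{x,s}$ is closed, so, being connected and containing the unit, it equals the identity component $\mathscr{M}_{x,s}^0$. Therefore $W_x^{\mathrm{sm}}$ and $\mathscr{M}_x^0$ are open subschemes of $\mathscr{M}_x$ with the same generic and special fibres, hence coincide, and $u_x$ is the identity. These identifications are canonical and therefore glue to an isomorphism $u\colon\Pic^0_{W/C}\xrightarrow{\sim}\mathscr{M}^0$.

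Finally, the variant with $W$, $C$ replaced by $W_{K\alg}$, $C_{K\alg}$ follows by running the same argument: by Proposition \ref{generalminpropII}, $X_{K\alg}\to C_{K\alg}$ is again a minimal elliptic surface, and by Lemma \ref{WBasechangelem} its Weierstrass model is $W_{K\alg}\to C_{K\alg}$, so Proposition \ref{globalweierstrassprop} and the cited results of \cite{Con} apply verbatim over $C_{K\alg}$. I expect the step requiring most care to be the last identification $W_x^{\mathrm{sm}}=\mathscr{M}_x^0$, that is, controlling the behaviour of the minimal regular resolution on smooth loci and feeding it into the structure theory of \cite{Con}; the representability of $\Pic^0_{W/C}$, the Abel--Jacobi isomorphism, and the construction of $u$ are essentially formal.
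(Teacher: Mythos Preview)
Your argument is correct, but it takes a genuinely different route from the paper's. The paper does not pass through the Abel--Jacobi identification $W^{\mathrm{sm}}\cong\Pic^0_{W/C}$ at all; instead it invokes \cite[Chapter 9.3, Theorem 1]{BLR} directly, which says that for a relative curve with at worst rational singularities the functor $\Pic^0$ is the identity component of the N\'eron model of the Jacobian of the generic fibre. Representability and smoothness of $\Pic^0_{W/C}$ come from that same theorem, connectedness of the fibres is by definition of $\Pic^0$, and the isomorphism $\Pic^0_{W/C}\to\mathscr{M}^0$ is reduced (after localising at the finitely many bad points of $C$) to the fact that minimal Weierstrass models are normal with rational singularities \cite[Corollary 8.4]{Con}. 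Your approach is more explicit: you realise $\Pic^0_{W/C}$ concretely as $W^{\mathrm{sm}}$, then match $W^{\mathrm{sm}}$ with $\mathscr{M}^0$ locally via the minimal regular resolution and \cite[Theorem 5.4]{Con}. This has the advantage of avoiding the heavier representability and comparison machinery of \cite[Chapter 9.3]{BLR}, and makes the group structure on $W^{\mathrm{sm}}$ visible; the paper's approach is shorter and conceptually cleaner, since the rational-singularities criterion packages exactly the input needed. One small point worth tightening in your write-up: the step ``fibrewise isomorphism $\Rightarrow$ isomorphism'' for the Abel--Jacobi map deserves a word of justification (e.g.\ flatness of $W^{\mathrm{sm}}$ over $C$ plus the fibrewise criterion, or an \'etaleness argument), though this is routine.
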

\begin{proof}
The claim regarding the representability of $\Pic^0_{C/K}$ follows from \cite[Chapter 9.3, Theorem 1]{BLR}. The fibres of $\Pic^0_{C/K}$ are then connected by definition. For the second claim we observe that the N\'eron model commutes with localisations at closed points of $C,$ as does $W$ (by construction). Suppose $U\subseteq C$ is the largest open subset of $C$ such that $f$ is smooth above $U.$ Then $U$ is non-empty, and we let $x_1,..., x_n$ be the complement of $U.$ It suffices to show that the morphism $\Pic^0_{W/C}\to \mathscr{M}^0$ induces isomorphisms of the respective localisations at the points $x_1,$..., $x_n.$ This, however, follows from the fact that minimal Weierstrass models are normal and have rational singularities (see \cite[Corollary 8.4]{Con}), together with \cite[Chapter 9.3, Theorem 1]{BLR}. The remaining claim now follows from Lemma \ref{WBasechangelem}.
\end{proof}\\
Now let $\pi'\colon X'\to W$ be the composition of the canonical morphism $X'\to X$ with the projection $\pi\colon X\to W.$ Recall that we have a commutative diagram
$$\begin{tikzcd}
X' \arrow{r}{\pi'}\arrow[swap]{rd}{f'} &W \arrow{d}{w}\\
&C.
\end{tikzcd}$$
We denote by $\overline{\pi}',$ $\overline{f}',$ and $\overline{w}$ the morphisms obtained from those in the diagram by base change to $K\alg.$
\begin{lemma}
There is an isomorphism $\R^1\overline{w}_\ast \F_\ell\to \R^1\overline{f}'_\ast\F_\ell$ which is equivariant with respect to the canonical action of $\Gal(K\sep/K)$ on both sheaves. \label{treelem} 
\end{lemma}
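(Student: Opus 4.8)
The plan is to compare the two fibrations $\overline{w}\colon W_{K\alg}\to C_{K\alg}$ and $\overline{f}'\colon X'_{K\alg}\to C_{K\alg}$ via the modification $\overline{\pi}'\colon X'_{K\alg}\to W_{K\alg}$, showing that $\R^1$ of the constant sheaf $\F_\ell$ is unchanged by $\overline{\pi}'$. Concretely, I would first record that there is a canonical morphism $\R^1\overline{w}_\ast\F_\ell\to \R^1\overline{f}'_\ast\F_\ell$ coming from adjunction $\F_{\ell,W_{K\alg}}\to \R\overline{\pi}'_\ast\F_{\ell,X'_{K\alg}}$ together with the Leray spectral sequence for $\overline{w}\circ\overline{\pi}'=\overline{f}'$; the Galois-equivariance is automatic since every morphism in sight is defined over $K$ and the construction is functorial. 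So the content is that this map is an \emph{isomorphism} of sheaves on $C_{K\alg}$, which can be checked on stalks.

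The main step is therefore a stalk computation at each geometric point $\overline{x}$ of $C_{K\alg}$. Since $W$ and $X'$ are proper over $C$, proper base change identifies the stalk of $\R^i\overline{w}_\ast\F_\ell$ at $\overline{x}$ with $H^i(W_{\overline{x}},\F_\ell)$ and similarly for $\overline{f}'$; so I need $H^1(W_{\overline{x}},\F_\ell)\xrightarrow{\sim} H^1(X'_{\overline{x}},\F_\ell)$ for the fibre over every geometric point. Away from the finitely many bad points this is clear because $\overline{\pi}'$ is an isomorphism there (indeed both fibres are the same smooth genus-one curve). At a bad point $\overline{x}$: the fibre $W_{\overline{x}}$ is, by Proposition \ref{globalweierstrassprop}, a geometrically integral curve of arithmetic genus one with at most one singular point — i.e. a Weierstrass fibre, which is either a nodal or a cuspidal rational curve (or still smooth, if the singular fibre of $f$ had only one component, in which case $\overline\pi'$ over $\overline x$ is a blow-up in a smooth point). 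The fibre $X'_{\overline{x}}$ is obtained from it by the minimal regular resolution followed by the blow-ups producing the big model; in all cases $X'_{\overline{x}}\to W_{\overline{x}}$ contracts a tree of $\Ps^1$'s (the exceptional locus of a resolution of a rational singularity, plus some extra rational tails), so it induces an isomorphism on $H^1(-,\F_\ell)$. One clean way to see the invariance is: $H^1$ of a proper curve over an algebraically closed field with $\F_\ell$-coefficients depends only on the configuration of the components and their (geometric) genera via the exact sequence relating $H^1$ of the curve, $H^1$ of the normalisation, and the "dual graph" $H^1$; adding rational tails or rational chains, or resolving a node/cusp of a rational component, changes neither the genus contributions nor the first Betti number of the dual graph in a way that affects $H^1$. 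For the cuspidal case one uses that a cusp is a universal homeomorphism point, so normalising it does not change $H^1$; for the nodal case the loop in the dual graph created by separating the node is exactly compensated.

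The step I expect to be the main obstacle is the bad-fibre comparison of $H^1$, and in particular getting the nodal (multiplicative) case right: there the Weierstrass fibre $W_{\overline x}$ already has $H^1=\F_\ell$ coming from its node, and one must check that the minimal resolution together with the blow-ups used to build the big model reproduces exactly this $\F_\ell$ (a single loop in the dual graph of $X'_{\overline x}$) and nothing more — i.e. that all exceptional components form trees attached to the cycle. This is where one invokes the explicit Kodaira/Néron classification of the fibres (via Tate's algorithm, already cited as \cite{Szy}) or, more structurally, the fact that $W_x\to\Spec\Og_{C,x}$ has rational singularities so the exceptional divisor of its resolution is a tree of $\Ps^1$'s; combined with the description in the preceding proposition of $\pi^{-1}(z_j)$ and of the extra components added in passing to a big model (multiplicity-one, geometrically integral $\Ps^1$'s meeting the rest in one point), one concludes that $H^1$ is preserved. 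Once the stalkwise isomorphism is established at every geometric point, the morphism of sheaves $\R^1\overline w_\ast\F_\ell\to\R^1\overline f'_\ast\F_\ell$ is an isomorphism, and its Galois-equivariance — being built from canonical adjunction and base-change maps for morphisms defined over $K$ — is immediate, completing the proof.
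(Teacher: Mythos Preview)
Your approach is correct and rests on the same key fact as the paper's proof: the fibres of $\overline{\pi}'$ over points of $W_{K\alg}$ are either points or trees of copies of $\Ps^1_{K\alg}$, so $\R^1\overline{\pi}'_\ast\F_\ell=0$. The paper, however, uses this more directly. From the low-degree exact sequence of the Leray spectral sequence for $\overline{f}'=\overline{w}\circ\overline{\pi}'$ one has
\[
0\to \R^1\overline{w}_\ast(\overline{\pi}'_\ast\F_\ell)\to \R^1\overline{f}'_\ast\F_\ell\to \overline{w}_\ast\R^1\overline{\pi}'_\ast\F_\ell,
\]
and since $\overline{\pi}'_\ast\F_\ell=\F_\ell$ (connected fibres) and $\R^1\overline{\pi}'_\ast\F_\ell=0$ (tree fibres), the isomorphism drops out immediately, with no proper base change and no inspection of fibres over $C_{K\alg}$. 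Your route via a stalk computation over $C_{K\alg}$ reaches the same conclusion but takes a detour: the paragraph on the nodal case and the dual-graph bookkeeping is unnecessary, because what matters is not the global shape of $X'_{\overline{x}}$ or $W_{\overline{x}}$ but only that the \emph{exceptional} fibres of $\overline{\pi}'$ (over the singular points of $W_{K\alg}$, not over points of $C_{K\alg}$) are trees --- and this already forces $H^1(X'_{\overline{x}},\F_\ell)\cong H^1(W_{\overline{x}},\F_\ell)$ via Leray for the fibre map $X'_{\overline{x}}\to W_{\overline{x}}$, uniformly in the Kodaira type. In short: same idea, but the paper packages it one level up in the spectral sequence and thereby sidesteps entirely the case analysis you flagged as the ``main obstacle''.
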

\begin{proof}
The Leray spectral sequence for the composition $\overline{f}'=\overline{w}\circ \overline{\pi}'$ gives us a short exact sequence
$$0\to \R^1\overline{w}_\ast (\overline{\pi}'_\ast\F_\ell)\to \R^1 \overline{f}'_\ast \F_\ell\to \overline{w}_\ast \R^1\overline{\pi}'_\ast \F_\ell.$$ We observe that the morphism $\F_\ell\to \overline{\pi}'_\ast \F_\ell$ is an isomorphism, so we have at least already constructed a morphism with the correct source and target. We also know already that this map is injective. The morphism $\overline{\pi}'\colon X_{K\alg}\to W_{K\alg}$ is an isomorphism outside finitely many points of $W_{K\alg}\to C_{K\alg},$ and the reduced preimages of those points are trees of copies of $\Ps^1_{K\alg}.$ Hence the proper base change theorem tells us that the stalk of $\R^1\overline{\pi}'_\ast \F_\ell$ is zero at those points, and it is certainly zero at all other points as well. Hence the morphism we constructed above is indeed an isomorphism. The claim about equivariance follows from the fact that $\overline{f}',$ $\overline{w},$ and $\overline{\pi}'$ are compatible with the actions of $\Gal(K\sep/K)$ on the schemes $X'_{K\alg},$ $W_{K\alg},$ and $C_{K\alg}.$ 
\end{proof}
\begin{lemma}
There is an isomorphism $\R^1\overline{f}'_\ast \F_\ell \to \Pic^0_{W_{K\alg}/C_{K\alg}}[\ell]$ which is $\Gal(K\sep/K)$-equivariant. 
\end{lemma}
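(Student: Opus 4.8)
The plan is to reduce, via Lemma \ref{treelem}, to producing a $\Gal(K\sep/K)$-equivariant isomorphism $\R^1\overline{w}_\ast\F_\ell\cong \Pic^0_{W_{K\alg}/C_{K\alg}}[\ell]$. Here I would first exploit a special feature of the present setting: since the residue field of $\Og_K$ is algebraically closed and $\ell$ is invertible in $\Og_K$, Hensel's lemma shows that $K$ itself already contains all $\ell$-th roots of unity, so $\mu_\ell$ and $\F_\ell$ are isomorphic as $\Gal(K\sep/K)$-equivariant locally constant étale sheaves on $W_{K\alg}$ (and on $C_{K\alg}$). Hence it suffices to construct a $\Gal(K\sep/K)$-equivariant isomorphism $\R^1\overline{w}_\ast\mu_\ell\cong \Pic^0_{W_{K\alg}/C_{K\alg}}[\ell]$.

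Next I would push the Kummer sequence $1\to\mu_\ell\to\Gm\xrightarrow{\ell}\Gm\to 1$ on the small étale site of $W_{K\alg}$ forward along $\overline{w}$. Because $\overline{w}$ is proper and flat with geometrically integral fibres (Proposition \ref{globalweierstrassprop}), one has $\overline{w}_\ast\Og_{W_{K\alg}}=\Og_{C_{K\alg}}$ universally, hence $\overline{w}_\ast\Gm=\Gm$; and multiplication by $\ell$ is an epimorphism of étale sheaves on $C_{K\alg}$. Thus the long exact sequence degenerates to an isomorphism $\R^1\overline{w}_\ast\mu_\ell\xrightarrow{\sim}(\R^1\overline{w}_\ast\Gm)[\ell]$. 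Since $\overline{w}$ admits the section $\sigma$ and has geometrically integral fibres, $\R^1\overline{w}_\ast\Gm$ is canonically identified with the relative Picard functor $\Pic_{W_{K\alg}/C_{K\alg}}$ (representability and this comparison: \cite[Ch.\ 8]{BLR}), so one obtains $\R^1\overline{w}_\ast\mu_\ell\cong\Pic_{W_{K\alg}/C_{K\alg}}[\ell]$, with its natural Galois action.

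To finish, I would show that the inclusion $\Pic^0_{W_{K\alg}/C_{K\alg}}[\ell]\hookrightarrow\Pic_{W_{K\alg}/C_{K\alg}}[\ell]$ is an isomorphism by comparing geometric stalks over $C_{K\alg}$. Writing $\mathscr{M}^0\cong\Pic^0_{W_{K\alg}/C_{K\alg}}$, which is smooth over $C_{K\alg}$ with connected one-dimensional fibres by the Lemma above identifying $\Pic^0_{W/C}$ with $\mathscr{M}^0$, smoothness together with the invertibility of $\ell$ yields an exact sequence $0\to\mathscr{M}^0[\ell]\to\mathscr{M}^0\xrightarrow{\ell}\mathscr{M}^0\to 0$ of étale sheaves, from which the stalk of $\Pic^0_{W_{K\alg}/C_{K\alg}}[\ell]$ at a geometric point $\overline{x}$ of $C_{K\alg}$ is seen to be $\mathscr{M}^0(W_{\overline{x}})[\ell]=\Pic^0(W_{\overline{x}})[\ell]$. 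On the other side, proper base change identifies the stalk of $\R^1\overline{w}_\ast\mu_\ell$ at $\overline{x}$ with $H^1(W_{\overline{x}},\mu_\ell)$, which the Kummer sequence on the cubic curve $W_{\overline{x}}$ (whose unit group is $\ell$-divisible, $\kappa(\overline{x})$ being algebraically closed) identifies with $\Pic(W_{\overline{x}})[\ell]$; as the component group of $\Pic(W_{\overline{x}})$ is $\Z$ for each of the three possible fibre types (smooth, nodal or cuspidal cubic), this equals $\Pic^0(W_{\overline{x}})[\ell]$. Hence the map is an isomorphism on all geometric stalks, and therefore an isomorphism of sheaves.

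The equivariance comes for free: all the data above is obtained by base change from, or descends to, $C$ and $W$, and $\overline{w},\overline{f}',\overline{\pi}'$ intertwine the $\Gal(K\sep/K)$-actions on $W_{K\alg},C_{K\alg},X'_{K\alg}$ with the trivial ones downstairs, so every identification used is automatically $\Gal(K\sep/K)$-equivariant for the natural actions; composing with Lemma \ref{treelem} concludes. I expect the main friction to lie in the two classical-but-fiddly inputs, namely the identification $\R^1\overline{w}_\ast\Gm\cong\Pic_{W_{K\alg}/C_{K\alg}}$ and the stalkwise equality $\Pic^0[\ell]=\Pic[\ell]$ — the latter resting on the Picard-group computation at the unique singular point of a singular Weierstrass fibre — while the Kummer-theoretic skeleton is routine.
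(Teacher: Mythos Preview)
Your proposal is correct and follows essentially the same route as the paper: reduce via Lemma~\ref{treelem} to $\overline{w}$, use $\overline{w}_\ast\Gm=\Gm$ and the Kummer sequence to identify $\R^1\overline{w}_\ast\F_\ell$ with $(\R^1\overline{w}_\ast\Gm)[\ell]=\Pic_{W_{K\alg}/C_{K\alg}}[\ell]$, and then observe that $\Pic^0[\ell]=\Pic[\ell]$ since the component group of the Picard scheme of each geometric fibre is $\Z$. The paper states this last point in one line (the fibres are integral curves, so $\Pic^0$ is the kernel of the degree map), whereas you verify it stalkwise via proper base change and a case analysis on the cubic fibres; your treatment of $\mu_\ell\cong\F_\ell$ is likewise made explicit where the paper leaves it implicit, but none of this constitutes a genuinely different argument.
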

\begin{proof}
Since the map $\Gm\to \overline{w}_\ast \Gm$ is an isomorphism on the small \'etale site of $C_{K\alg},$ we see that the Kummer sequence gives us a short exact sequence
$$0\to \R^1\overline{w}_\ast \F_\ell \to \R^1\overline{w}_\ast \Gm \overset{[\ell]}\to \R^1\overline{w}_\ast\Gm$$ Since the (geometric) fibres of $\overline{w}$ are integral curves, it follows that $\Pic^0_{W_{K\alg}/C_{K\alg}}$ is the same as the kernel of the degree map $\Pic_{W_{K\alg}/C_{K\alg}} \to \Z,$ which implies 
\begin{align*}
\Pic^0_{W_{K\alg}/C_{K\alg}}[\ell]&=\R^1 \overline{w}_\ast \Gm[\ell]\\
&=\R^1\overline{w}_\ast \F_\ell\\
&=\R^1\overline{f}'_\ast\F_\ell
\end{align*}
on the small \'etale site of $C_{K\alg},$ where the last equality is due to the preceding Lemma. The second claim follows from the fact that both $\overline{w}$ and $\overline{f}'$ are equivariant with respect to the action of $\Gal(K\sep/K)$ on $W_{K\alg}$ and $C_{K\alg},$ together with (again) the preceding Lemma.
\end{proof}\\
The same argument shows that there is an isomorphism 
$$\R^1f'_\ast \F_\ell\to \mathscr{M}^0[\ell];$$ in fact, the morphism constructed in the Lemma is the pull-back of this map. As a corollary to this observation, we can construct an integral regular curve $C'$ of finite type over $K$ together with a flat and generically \'etale morphism $\epsilon\colon C'\to C,$ such that, at the generic point of $C',$ the sheaf $\R^1f'_\ast\F_\ell$ is constant (and isomorphic to $\F_\ell^2$).  If we denote by $\overline{\epsilon}\colon C'_{K\alg}\to C_{K\alg}$ the induced morphism, we are now ready to formulate our second cohomological condition: 
\begin{itemize}
\item Condition 2: \it The Galois representation on $H^1(C_{K\alg}, \F_\ell\oplus \overline{\epsilon}_\ast\overline{\epsilon}^\ast\R^1\overline{f}'_\ast\F_\ell)$ is tamely ramified. \label{Condition2}
\end{itemize}
For later use, we shall prove now a Lemma about the curve $C':$ 
\begin{lemma}
The curve $C'_{K\alg}:=C'\times_{\Spec K}\Spec K\alg$ is reduced and equal to the scheme-theoretic disjoint union of its irreducible components. \label{CDecomplem}
\end{lemma}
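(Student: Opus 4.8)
The plan is to reduce the statement to the assertion that $C'$ is \emph{geometrically reduced} over $K$, and then to treat the separable and purely inseparable parts of the extension $K\alg/K$ separately.

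First I would show that the function field $K(C')$ is a separably generated field extension of $K$. Since $C$ is smooth over $K$, the extension $K(C)/K$ is separably generated, so $\dim_{K(C)}\Omega_{K(C)/K}=\trdeg(K(C)/K)=1$. Since $\epsilon\colon C'\to C$ is generically étale, $\Omega_{K(C')/K(C)}=0$, so the exact sequence $\Omega_{K(C)/K}\otimes_{K(C)}K(C')\to\Omega_{K(C')/K}\to\Omega_{K(C')/K(C)}\to 0$ gives $\dim_{K(C')}\Omega_{K(C')/K}\leq 1$. As $\trdeg(K(C')/K)=1$ and the module of Kähler differentials of a finitely generated field extension always has dimension at least the transcendence degree, equality holds, i.e.\ $K(C')/K$ is separably generated. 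Now $C'$ is integral and regular, hence reduced, and its unique generic point has residue field separable over $K$; therefore $C'$ is geometrically reduced over $K$. In particular $C'_{K\alg}$ is reduced, which is the first assertion of the Lemma.

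Next I would establish the decomposition by factoring $K\subseteq K\sep\subseteq K\alg$. The scheme $C'_{K\sep}:=C'\times_{\Spec K}\Spec K\sep$ is of finite type over $K\sep$, hence Noetherian; and it is regular, since $C'_{K\sep}\to C'$ is a cofiltered limit of finite étale morphisms, étale morphisms preserve regularity, and $C'$ is regular. A regular Noetherian scheme is the scheme-theoretic disjoint union of its connected components, each of which is integral, so $C'_{K\sep}=\coprod_i V_i$ with each $V_i$ an integral (regular) curve over $K\sep$. Since $K\alg/K\sep$ is purely inseparable, $\Spec K\alg\to\Spec K\sep$ is a universal homeomorphism, hence so is each base change $(V_i)_{K\alg}:=V_i\times_{\Spec K\sep}\Spec K\alg\to V_i$; thus $(V_i)_{K\alg}$ is irreducible. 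It is also reduced, being an open subscheme of the reduced scheme $C'_{K\alg}$, and therefore integral. Since $C'_{K\alg}=\coprod_i (V_i)_{K\alg}$ and the $(V_i)_{K\alg}$ are pairwise disjoint, irreducible, and closed, they are exactly the irreducible components of $C'_{K\alg}$; hence $C'_{K\alg}$ is their scheme-theoretic disjoint union.

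I expect the only genuine subtlety to be the first step. Over the possibly imperfect field $K$ a regular curve need not be geometrically reduced, so it is essential to exploit that $\epsilon$ is generically étale \emph{over the smooth curve $C$} in order to conclude that $K(C')/K$ is separably generated. Everything afterwards is formal: the extension $K\sep/K$ preserves regularity, hence the disjointness of the irreducible components, while the purely inseparable extension $K\alg/K\sep$ induces a universal homeomorphism and so can neither create nor merge irreducible components, and reducedness over $K\alg$ has already been secured in the first step.
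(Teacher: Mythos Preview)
Your proof is correct and follows essentially the same strategy as the paper's. Both first argue that $C'$ is geometrically reduced over $K$ (you via a K\"ahler differential count showing $K(C')/K$ is separably generated, the paper via the equivalent observation that $C'$ contains a dense $K$-smooth open), and then both factor through $K\sep$, using regularity of $C'_{K\sep}$ together with the universal homeomorphism $\Spec K\alg\to\Spec K\sep$ to conclude.
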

\begin{proof}
Because the map $\epsilon$ is generically smooth, we know that so is the morphism $C'\to \Spec K.$ Hence there exists a non-empty (hence dense) open subscheme $U\subseteq C'$ which is smooth over $K.$ For any open subscheme $V\subseteq C'$, the map $\Og_{C'}(V)\to \Og_{C'}(U\cap V)$ is injective, and remains so after base change to $K\alg.$ But the target is geometrically reduced, so the first claim follows. The second claim is purely topological; since the map $\Spec K\alg\to \Spec K\sep$  is a universal homeomorphism, it suffices to prove the corresponding claim for $C'\times_{\Spec K} \Spec K\sep.$ But this scheme is regular (as $C'$ is regular and $K\subseteq K\sep$ is separable), so no two irreducible components can intersect non-trivially. 
\end{proof}
\begin{theorem}
Let $f\colon X\to C$ be a minimal elliptic surface over $K$ which satisfies one of Condition 1 or Condition 2 above. Let $x\in C$ be a closed point above which the fibre of $f$ is singular. Then the extension $K\subseteq \kappa(x)$ is tamely ramified.  \label{Newtamethm}
\end{theorem}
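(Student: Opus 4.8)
The plan is to reduce, under either hypothesis, to the purely group‑theoretic assertion that the wild inertia subgroup $P\subseteq\Gal(K\sep/K)$ acts trivially on a certain finite set of geometric points of $C$, and then to detect this set inside a cohomology group.

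First, the reduction. By Proposition \ref{separableprop} the extension $K\subseteq\kappa(x)$ is separable, so the closed points of $C_{K\alg}$ lying above $x$ form a single $\Gal(K\sep/K)$‑orbit whose point stabilisers are the conjugates of $\Gal(K\sep/\kappa(x))$; hence $\kappa(x)/K$ is tamely ramified if and only if $P$ fixes that orbit pointwise. It therefore suffices to prove that $P$ acts trivially on the finite set $\Sigma$ of closed points of $C_{K\alg}$ above which $f$ — equivalently, any big model $f'\colon X'\to C$ birational to $X$ over $C$ — has a singular fibre. We note once and for all that $P$ acts trivially on $\Q_\ell(1)$ and $\F_\ell(1)$, since $\ell$ is invertible in $\Og_K$ and so $K(\mu_{\ell^\infty})/K$ is tamely ramified.

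Suppose Condition 1 holds, witnessed by a big model $f'\colon X'\to C$ with all $H^i(X'_{K\alg},\Q_\ell)$ tame. As $X'$ is again a smooth projective surface, the cycle class map embeds $\mathrm{NS}(X'_{K\alg})\otimes\Q_\ell$ as a $\Gal(K\sep/K)$‑stable subspace of $H^2(X'_{K\alg},\Q_\ell(1))$, on which $P$ acts trivially (it does so on $H^2(X'_{K\alg},\Q_\ell)$ by hypothesis and on $\Q_\ell(1)$ by the previous paragraph). Now $\Gal(K\sep/K)$ permutes the irreducible components of the singular fibres of $\overline{f}'$ compatibly with its action on $\Sigma$, and the classes of these components in $\mathrm{NS}(X'_{K\alg})\otimes\Q_\ell$ are pairwise distinct: within one fibre they are even linearly independent, since the fibral intersection form is negative semidefinite with kernel spanned by the fibre class; and two components lying over distinct points of $C$ are disjoint, so an equality of their classes would force the negative self‑intersection of a fibral component to vanish. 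Therefore, if some $\gamma\in P$ carried a point $\bar x\in\Sigma$ to a different point $\bar x'$, it would carry a component of the fibre over $\bar x$ to a distinct component of the fibre over $\bar x'$ while fixing its class — a contradiction. Hence $P$ acts trivially on $\Sigma$.

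Suppose instead Condition 2 holds. The relevant object is the sheaf $\mathcal G:=\R^1\overline{f}'_\ast\F_\ell\cong\mathscr{M}^0[\ell]$ on $C_{K\alg}$, which is lisse of rank $2$ on $U_{K\alg}:=C_{K\alg}\setminus\Sigma$ while its stalk at a point of $\Sigma$ has rank $1$ (multiplicative reduction) or $0$ (additive reduction). Since $C'$ trivialises $\R^1 f'_\ast\F_\ell$ generically, $\overline{\epsilon}^\ast\mathcal G$ restricts to the constant sheaf $\F_\ell^{2}$ on $C'_{K\alg}\setminus\overline{\epsilon}^{-1}(\Sigma)$, and — because $C'_{K\alg}$ is a disjoint union of smooth curves by Lemma \ref{CDecomplem} — one obtains an exact sequence $0\to\overline{\epsilon}^\ast\mathcal G\to\F_\ell^{2}\to\mathcal Q\to 0$ on $C'_{K\alg}$, in which $\F_\ell^{2}$ is now the constant sheaf and $\mathcal Q$ is a skyscraper supported on $\overline{\epsilon}^{-1}(\Sigma)$ with non‑zero stalks, permuted by $\Gal(K\sep/K)$ together with $\overline{\epsilon}^{-1}(\Sigma)$. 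The long exact cohomology sequence over $C'_{K\alg}$, combined with the tameness of $H^1(C'_{K\alg},\overline{\epsilon}^\ast\mathcal G)=H^1(C_{K\alg},\overline{\epsilon}_\ast\overline{\epsilon}^\ast\mathcal G)$ furnished by Condition 2, then reduces matters to the tameness of $H^0(C'_{K\alg},\F_\ell)$ and $H^1(C'_{K\alg},\F_\ell)$: granting these, $H^0(C'_{K\alg},\mathcal Q)=\bigoplus_{\bar y\in\overline{\epsilon}^{-1}(\Sigma)}\mathcal Q_{\bar y}$ is tame, and since the $\mathcal Q_{\bar y}$ are non‑zero and permuted by $\Gal(K\sep/K)$, this forces $P$ to act trivially on $\overline{\epsilon}^{-1}(\Sigma)$, hence on $\Sigma$. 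The main obstacle is precisely this last input — deducing tameness of the cohomology of the constant sheaf on $C'_{K\alg}$, i.e. tameness of the $\Gal(K\sep/K)$‑set $\pi_0(C'_{K\alg})$ (its field of constants contains $\mu_\ell$ by the Weil pairing, but might a priori be larger) and of $H^1(C'_{K\alg},\F_\ell)$, from the tameness of $H^1(C_{K\alg},\F_\ell)$ and the structure of the branched cover $\epsilon\colon C'\to C$ (which is ramified over $\Sigma$). This is where I expect the real work of the proof to lie.
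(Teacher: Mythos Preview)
Your treatment of Condition~1 is correct and in fact more direct than the paper's. The paper works through the Leray spectral sequence for $\overline{f}'=\overline{w}\circ\overline{\pi}'$, first proving $E_2^{0,2}=E_\infty^{0,2}$ (which requires Shioda's Mordell--Weil theorem to kill $E_2^{2,1}$) and then identifying $E_2^{0,2}\cong\Q_\ell\oplus\bigoplus_Y\Q_\ell\langle Y\rangle$ as a Galois module, where $Y$ runs over the exceptional components of $X'_{K\alg}\to W_{K\alg}$. Your route through the cycle class map and the negativity of the self-intersection of a component of a reducible fibre bypasses both the spectral sequence and the appeal to \cite{Shi}; you use less of the structure of the fibration, but for the statement at hand that is a gain.

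For Condition~2, however, you have located the obstacle but not removed it, and your framing misses the mechanism by which it is removed. The paper does \emph{not} try to establish tameness of $H^i(C'_{K\alg},\F_\ell)$ directly (and in fact only $H^0$ is needed, not $H^1$, since one only uses the segment $\Gamma(\F_\ell^2)\to\Gamma(\mathcal Q)\to H^1(\overline\epsilon^\ast\mathcal G)$ together with exactness of $(-)^P$ on $\F_\ell$-spaces). Instead it argues by contradiction: if $\kappa(x)/K$ were wildly ramified, the $P$-orbit of any geometric point over $x$ would have size divisible by $p\geq 5$, so $\Sigma$ would contain at least five points. Since every irreducible component $C_j$ of $C'_{K\alg}$ surjects onto $C_{K\alg}$, each $C_j$ then carries at least three points at which the stalk of $\overline\epsilon^\ast\mathcal G$ drops. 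Your own short exact sequence on $C_j$ now forces $H^1(C_j,\overline\epsilon^\ast\mathcal G|_{C_j})\neq 0$, because $\bigoplus_{y\in C_j}M_y$ has $\F_\ell$-dimension at least $3>2=\dim\F_\ell^2$. Choosing a class in $H^1(C'_{K\alg},\overline\epsilon^\ast\mathcal G)$ supported on $C_j$ alone and invoking Condition~2 shows that $P$ fixes $C_j$; hence $P$ acts trivially on $\pi_0(C'_{K\alg})$, hence on $H^0(C'_{K\alg},\F_\ell^2)$, and the remainder of your exact-sequence argument then goes through. The missing idea, in short, is that the hypothetical failure of tameness itself supplies the abundance of degeneration points needed to distinguish the components of $C'_{K\alg}$ cohomologically.
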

\begin{proof}
The proof of this theorem will occupy the remainder of this paragraph. Suppose first that \it Condition 2 \rm is satisfied, and assume that $K\subseteq \kappa(x)$ is wildly ramified. Suppose that $\eta$ is the generic point of $C',$ and let $\gamma\colon \Spec (\kappa(\eta)\otimes_KK\alg)\to C'_{K\alg}$ be the canonical map. We have a Galois-equivariant exact sequence
$$0\to \overline{\epsilon}^\ast \R^1\overline{f}_\ast \F_\ell\to \gamma_\ast\gamma^\ast \overline{\epsilon}^\ast\R^1\overline{f}_\ast \F_\ell\to \bigoplus_{x\in S} x_\ast M_x\to 0,$$
where $S$ is the set of $K\alg$-points of $C'_{K\alg}$ above which the order of the stalk of $\overline{\epsilon}^\ast\R^1\overline{f}_\ast \F_\ell$ drops. Note that $M_x$ is isomorphic to either $\F_\ell$ or $\F_\ell^2.$ Now write
$$C'_{K\alg}=C_1\sqcup...\sqcup C_d$$ for the decomposition of $C'_{K\alg}$ into irreducible components. We know from Lemma \ref{CDecomplem} that this is possible. We now claim that, on each $C_j,$ there exist at least three $K\alg$-points at which the order of the stalk of $\overline{\epsilon}^\ast\R^1\overline{f}_\ast \F_\ell$ drops. This follows because the corresponding statement for $C_{K\alg}$ is true, which, in turn, follows from the fact that the residue characteristic of $K$ is at least three and that $K\subseteq \kappa(x)$ is wildly ramified. This allows us to deduce that the action of $\Gal(K\sep/K)$ on the set $\{C_1,..., C_d\}$ is tame. Indeed, the exact sequence above shows that, for each $j=1,...,d,$ we have
$$H^1(C_j, \overline{\epsilon}^\ast\R^1\overline{f}_\ast \F_\ell\mid_{C_j})\not=0.$$ Hence we can find a torsor $\mathscr{T}$ on $C'_{K\alg}$ which is non-trivial on $C_j$ but trivial on all other components. As Condition 2 guarantees that the Galois action $H^1(C'_{K\alg}, \overline{\epsilon}^\ast\R^1\overline{f}_\ast \F_\ell)$ is tamely ramified, this means that all elements of the wild inertia subgroup must fix the component $C_j.$ This observation implies that the Galois action on $\Gamma(C'_{K\alg}, \gamma_\ast\gamma^\ast \overline{\epsilon}^\ast\R^1\overline{f}_\ast \F_\ell)$ is tamely ramified. Now we consider the exact sequence
$$\Gamma(C'_{K\alg}, \gamma_\ast\gamma^\ast \overline{\epsilon}^\ast\R^1\overline{f}_\ast \F_\ell)\to \Gamma(C'_{K\alg}, \bigoplus_{x\in S} x_\ast M_x)\to H^1(C'_{K\alg}, \overline{\epsilon}^\ast\R^1\overline{f}_\ast \F_\ell).$$ All groups in this sequence are $\F_\ell$-vector spaces, so taking invariants under the wild inertia group does not destroy exactness of this sequence. This, however, means that the Galois action on $\Gamma(C'_{K\alg}, \bigoplus_{x\in S} x_\ast M_x)=\bigoplus_{x\in S} M_x$ is tamely ramified, so the same must be true for the Galois action on $S.$ However, the set of $K\alg$-points of $C$ above which the fibre of $f$ is singular is exactly the image of $S$ under $\overline{\epsilon},$ so our claim follows. \\
Now suppose that \it Condition 1 \rm is satisfied. Some of the arguments that follow are inspired by Paragraph 3 of \cite{Sch}; we provide some details not covered in \it loc. cit., \rm particularly relating to Galois equivariance. Until the end of this proof, we impose the condition that $\overline{f}$ not be smooth, i.e., that the minimal elliptic fibration $\overline{f}\colon X_{K\alg}\to C_{K\alg}$ have at least one singular fibre, which clearly causes no loss of generality. 
First note that we have a Galois equivariant spectral sequence
$$E_2^{i,j}=H^i(C_{K\alg}, \R^j \overline{f}'_\ast \Q_\ell)\Rightarrow H^{i+j}(X'_{K\alg}, \Q_\ell).$$ 
\begin{lemma}
The spectral sequence above has the property that $E_2^{0,2}=E_{\infty}^{0,2}.$ \label{SpectralLemmaI}
\end{lemma}
\begin{proof}
It follows from general principles that $E_3^{0,2}=E_{\infty}^{0,2},$ and that $E_{3}^{0,2}=\ker(E_2^{0,2}\to E_2^{2,1}).$ The proof of \cite[Proposition 3.1]{Sch} shows that 
$$E_2^{2,1}=(\varprojlim \Hom(M[\ell^n], \Z/\ell^n\Z))\otimes_{\Z_\ell}\Q_\ell,$$
where $M$ denotes the Mordell-Weil group of $\overline{f}\colon X_{K\alg}\to C_{K\alg}.$ However, by \cite[Theorem 1.1]{Shi}, $M$ is a finitely generated Abelian group (this is where we use the condition on the existence of a singular fibre of $\overline{f}$; otherwise, the Theorem from \it loc. cit. \rm would not apply). Hence $E_2^{2,1}=0,$ which implies our claim. 
\end{proof}\\
We shall now prove the formula
\begin{align}
E_2^{0,2}= \Q_\ell \oplus \bigoplus_{ \text{$Y$ exceptional}} \Q_\ell\langle Y \rangle, \label{formula}
\end{align}
where the index set of the direct sum is meant to be the set of irreducible components of the exceptional locus of the canonical map $X'_{K\alg}\to X_{K\alg}\to W_{K\alg}.$ The action of some $\tau\in \Gal(K\sep/K)$ is given by sending such a $Y$ to $\tau^{-1}(Y).$ 
Recall that there is a Galois equivariant spectral sequence
$$E_2^{i,j}=\R^i \overline{w}_\ast \R^j \overline{\pi}'_\ast \Z/\ell^n\Z\Rightarrow \R^{i+j}\overline{f}'_\ast \Z/\ell^n\Z.$$ We have the following
\begin{lemma}
The spectral sequence above satisfies $E_2^{i,j}\not=0$ only if $ij=0$ and $i+j\leq 2.$ Moreover, the spectral sequence stabilizes at the $E_2$-page.  
\end{lemma}
\begin{proof}
The map $\overline{\pi}'$ is a birational morphism of surfaces, so $\R^j\overline{\pi}'_\ast \Z/\ell^n\Z$ is concentrated at finitely many points as soon as $j\geq1.$ This implies the claim that $E_2^{i,j}=0$ as soon as $ij\not=0.$ The claim that $E_2^{i,j}$ vanishes for $ij=0,$ $i+j\geq 3$ follows from the fact that both $\overline{\pi}'$ and $\overline{w}$ have fibres of dimension at most one. To see that the spectral sequence stabilises at $E_2,$ we consider the only differential map (on any page) whose source and target are both non-zero. This is the morphism 
$$\overline{w}_\ast \R^1\overline{\pi}'_\ast\Z/\ell^n\Z\to \R^2\overline{w}_\ast (\overline{\pi}'_\ast\Z/\ell^n\Z).$$ However, the source of this map vanishes by the same argument as in the proof of Lemma \ref{treelem}.
\end{proof}\\
Continuing with our proof of Formula (\ref{formula}), we note that the Lemma gives us a Galois equivariant split exact sequence 
$$0\to \Z/\ell^n\Z\to \R^2\overline{f}'_\ast \Z/\ell^n\Z\to \overline{w}_\ast \R^2\overline{\pi}'_\ast\Z/\ell^n\Z\to 0$$ (for the splitting, see the proof of Lemma 3.2 in \cite{Sch}). This gives us an isomorphism
$$E_2^{0,2}=\Q_\ell \oplus (\varprojlim \Gamma(W_{K\alg}, \R^2 \overline{\pi}'_\ast\Z/\ell^n\Z))\otimes_{\Z_\ell}\Q_\ell.$$
Now let $S$ be the scheme consisting of the spectra of the residue fields of the closed points of $W_{K\alg}$ above which the map $\overline{\pi}'\colon X'_{K\alg}\to W_{K\alg}$ is not an isomorphism. Let $s\colon S_{K\alg}\to W_{K\alg}$ be the canonical closed immersion. Further let $X'_S$ be the pull-back of $X'_{K\alg}$ along $s.$ To ease notation, we note that the category of \'etale Abelian sheaves on $S$ is canonically equivalent to the category whose objects are Abelian groups together with a direct sum decomposition indexed by $S$; under this equivalence, morphisms correspond to decomposable homomorphisms of Abelian groups. Consider the sheaf $\bigoplus_Y \Z\langle Y\rangle$ on $S$ (the decomposition comes from the map which sends a component of the exceptional divisor to an element of $S$). There is a canonical action on this sheaf under which an element $\tau\in \Gal(K\sep/K)$ acts as $Y\mapsto \tau^{-1}(Y).$ By Kummer theory, we have a canonical Galois equivariant isomorphism
$$\R^2\overline{\pi}'_\ast \Z/\ell^n\Z=s_\ast s^\ast \R^2\overline{\pi}'_\ast\Z/\ell^n\Z=s_\ast (\Pic_{X_S/S}\otimes_{\Z}\Z/\ell^n\Z).$$ 
By \cite[Chapter 9.2, Corollary 14]{BLR} there is a Galois equivariant morphism 
$$\Pic_{X_S/S}\to \bigoplus_Y \Z\langle Y\rangle$$ whose kernel is $\Pic^0_{X_S/S}$ and whose image is a finite-index equivariant subsheaf $\Lambda\subseteq \bigoplus_Y \Z\langle Y\rangle.$ Hence we find $$\R^2\overline{\pi}'_\ast \Z/\ell^n\Z=s_\ast (\Lambda\otimes_{\Z} \Z/\ell^n\Z).$$ Taking limits and tensoring with $\Q_\ell,$ we see that
$$\Gamma(W_{K\alg}, \R^2\overline{\pi}'_\ast \Q_\ell)=\bigoplus_{\text{$Y$ exceptional}} \Q_\ell\langle{Y}\rangle$$ as Galois representations, hence proving Formula (\ref{formula}). Now suppose $\Delta$ is the discriminant locus of the minimal elliptic fibration $f\colon X\to C,$ and let $\Delta'\subseteq C_{K\alg}$ be the set of closed points above which $\overline{f}'$ is singular. Then we have $\Delta\times_{\Spec K}\Spec K\alg\subseteq \Delta',$ and because the fibration $f'\colon X'\to C$ is big, the map $S\to \Delta'$ is surjective. Now Condition 1 tells us, together with Lemma \ref{SpectralLemmaI}, that the Galois action on the set of irreducible components of the exceptional divisor of $X'_{K\alg}\to W_{K\alg}$ is tame. Since this set surjects onto $S,$ we conclude that the Galois action on $\Delta\times_{\Spec K}\Spec K\alg$ is tamely ramified as well. This implies our claim. 
\end{proof}
\section{Removing horizontal log structure}
Let $g\colon X\to Y$ be a log smooth morphism of log schemes. Suppose that both $X$ and $Y$ are (classically) integral, and that the log structures $\mathscr{M}_X,$ $\mathscr{M}_Y$ on $X$ and $Y$ are the usual (compactifying) log structures associated with open subsets $U_X,$ $U_Y$ of $X$ and $Y$ whose complements are effective Cartier divisors. We further impose two conditions, namely that  \\
\\
(i) the scheme $X$ be regular in the classical sense, and the reduced complement of $U_X$ be a divisor with normal crossings, and that \\
\\
(ii) the log structure $\mathscr{M}_Y$ be fine and saturated (fs for short), i.e., it admit charts by finitely generated saturated integral monoids locally in the \'etale topology. \\
\\
Note that $\mathscr{M}_X$ is automatically fine, and that both log structures are fs. Suppose $D$ is the (reduced) complement of $U_X$ in $X.$ We can then write 
$$D=D_{\mathrm{hor}}+D_{\mathrm{vert}},$$
with $D_{\mathrm{hor}}$ and $D_{\mathrm{vert}}$ both effective, where $D_{\mathrm{hor}}$ is the sum of (some) irreducible components $D_0$ of $D$ such that $f(D_0)\cap U_Y \not=\emptyset$ (of course, such a decomposition is not unique).  Note that the complement of $D_{\mathrm{vert}}$ is contained in $g^{-1}(U_Y),$ and beware that $D_{\mathrm{hor}}$ need not be flat over $Y.$ We define a new log scheme $X',$ whose underlying scheme is given by that of $X,$ and whose log structure $\mathscr{M}_X'$ is associated with $U_{X'}=X\backslash D_{\mathrm{vert}}.$ Observe that the morphism of schemes underlying $g$ induces a morphism $g\colon X'\to Y$ of log schemes. The proof of the following Proposition seems to be similar, at least intuitively, to that of \cite[Proposition 3.1]{AK}, although the language and methods are somewhat different. Before we prove the Proposition, however, we shall need the following
\begin{lemma}
Let $\overline{x}$ be a geometric point of $X.$ Suppose that the pullback of $D$ to $\Spec \Og_{X, \overline{x}}$ is cut out by $x_1\cdot...\cdot x_d$ such that the sequence $x_1,...,x_d$ can be extended to a minimal set of generators of $\mathfrak{m}_{X, \overline{x}}.$ Then $\mathscr{M}_{X, \overline{x}}\subseteq \Og_{X, \overline{x}}$ is the submonoid generated by $x_1,..., x_d$ and $\Og_{X, \overline{x}}^\times.$ In particular, $\mathscr{M}_{X, \overline{x}}^{\mathrm{gp}}$ is canonically isomorphic to the subgroup of $(\Frac \Og_{X, \overline{x}})^\times$ generated by $x_1,...,x_d$ and $\Og_{X, \overline{x}}^\times.$ \label{stalklemma}
\end{lemma}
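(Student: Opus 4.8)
The plan is to unwind the definition of the compactifying log structure at the geometric point $\overline{x}$, and then to run a short unique factorisation argument in the regular local ring $A := \Og_{X, \overline{x}}$.

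Recall that $\mathscr{M}_X$ is, by definition, the subsheaf $\Og_X \cap j_\ast \Og_{U_X}^\times$ of $j_\ast \Og_{U_X}$, where $j \colon U_X \hookrightarrow X$ is the open immersion, its structure morphism to $\Og_X$ being the inclusion. Passing to stalks at $\overline{x}$ and using that localisation commutes with filtered colimits, one obtains
$$\mathscr{M}_{X, \overline{x}} = \{ f \in A : f \in (A[1/g])^\times \},$$
where $g := x_1 \cdots x_d$ is the chosen local equation of $D$. Here one uses that $U_X \times_X \Spec A$ is the complement of $V(g)$ in $\Spec A$, hence equals $\Spec A[1/g]$ since $g$ is a non-zero-divisor. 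So it suffices to identify the monoid on the right with the submonoid of $A$ generated by $x_1, \ldots, x_d$ together with $A^\times$.

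To this end, I would record that $A$ is a regular local ring: by condition (i) the scheme $X$ is regular, and regularity is preserved under strict henselisation, so $A$ is regular, hence a unique factorisation domain by the Auslander--Buchsbaum theorem, and in particular a domain. Since $x_1, \ldots, x_d$ can be completed to a minimal set of generators of $\mathfrak{m}_{X, \overline{x}}$, each $x_i$ is a prime element of $A$ (because $A/(x_i)$ is again a regular local ring, hence a domain), and the $x_i$ are pairwise non-associate, as their classes in $\mathfrak{m}_{X, \overline{x}}/\mathfrak{m}_{X, \overline{x}}^2$ are linearly independent. Now for $f \in A \setminus \{0\}$, the element $f$ is a unit in $A[1/g]$ if and only if $f h = g^n$ in $A$ for some $h \in A$ and some $n \geq 0$; one passes from the general relation $g^k f h = g^{k+n}$ (with $k \geq 0$) to this one by cancelling $g^k$, which is legitimate as $A$ is a domain and $g \neq 0$. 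This says exactly that $f \mid g^n = (x_1 \cdots x_d)^n$, and by unique factorisation it holds precisely when $f = u\, x_1^{a_1} \cdots x_d^{a_d}$ for some $u \in A^\times$ and integers $a_i \geq 0$. This is the asserted description of $\mathscr{M}_{X, \overline{x}}$.

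The remaining ``in particular'' is then a formality: the monoid $\mathscr{M}_{X, \overline{x}}$ is cancellative, being a submonoid of the group $(\Frac A)^\times$, so the canonical map $\mathscr{M}_{X, \overline{x}}^{\mathrm{gp}} \to (\Frac A)^\times$ is injective, with image the subgroup generated by $x_1, \ldots, x_d$ and $A^\times$, namely $\{ u\, x_1^{a_1} \cdots x_d^{a_d} : u \in A^\times,\ a_i \in \Z \}$. I do not anticipate a genuine obstacle here; the only points requiring some care are the identification of the stalk of the compactifying log structure with the set of functions that become invertible after inverting $g$, together with the two standard facts used about $A$ --- that strict henselisation preserves regularity, and that a partial system of parameters in a regular local ring consists of pairwise non-associate prime elements.
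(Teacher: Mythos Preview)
Your argument is correct and rests on the same mechanism as the paper's: the strict henselisation $A=\Og_{X,\overline{x}}$ is a regular local ring, hence a UFD, the $x_i$ are pairwise non-associate primes, and a section of the compactifying log structure is exactly a function whose only prime factors are among the $x_i$. The ``in particular'' clause is handled identically in spirit.

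The organisation differs slightly. You first identify the stalk as $\{f\in A:\ f\in(A[1/g])^\times\}$ and then run the unique factorisation argument directly in $A$. The paper instead represents an element of $\mathscr{M}_{X,\overline{x}}$ by a function on an explicit \'etale neighbourhood $E\to X$, factors it in the intermediate ring $\Og_{E,x}$ (using that a regular system of parameters in $\Og_{X,x}$ stays prime under $\Og_{X,x}\subseteq\Og_{E,x}\subseteq A$), and then transports the factorisation to $A$. Your route is a bit more direct since it bypasses the intermediate ring $\Og_{E,x}$; the paper's route makes the passage through \'etale neighbourhoods visible, which also implicitly justifies the stalk identification you summarise with ``localisation commutes with filtered colimits''. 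That phrase is doing real work in your proof: it encodes that $A[1/g]$ is the filtered colimit of the rings $\Gamma(E\setminus D_E,\Og_E)$ and that units are detected at some finite stage, so a sentence unpacking this would not be out of place. But the argument is sound.
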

\begin{proof}
Let us denote the image of $\overline{x}$ in any scheme to which this geometric point maps by $x.$ We may assume without loss of generality that the pullback of $D$ to $\Spec \Og_{X, x}$ is cut out by $x_1'\cdot...\cdot x'_d,$ such that this sequence extends to a minimal set of generators of $\mathfrak{m}_{X,x}.$ Let $f\in \mathscr{M}_{X, \overline{x}}.$ Then $f$ is defined (as a function) on some \'etale neighbourhood $\overline{x}\to E\to X.$  Observe that both inclusions 
$$\Og_{X,x}\subseteq \Og_{E,x}\subseteq \Og_{X, \overline{x}}$$ have the property that the maximal ideal in the smaller local ring generates the maximal ideal in the bigger one (for the first inclusion, this follows because $E\to X$ is \'etale; for the second inclusion, because $\Og_{X,\overline{x}}$ is the strict Henselisation of $\Og_{E,x}$ with respect to the geometric point $\overline{x}\to E).$ 
Hence $x_1',...,x_d'$ remain prime elements in $\Og_{E,x}$ and $\Og_{X, \overline{x}}.$ We can view $f$ as an element of $\Og_{E,x}, $ and write
$$f=\epsilon x_1'^{\beta_1}\cdot...\cdot x_d'^{\beta_d},$$ with $\epsilon \in \Og_{E,x}^\times,$ in a unique way. This decomposition, and its uniqueness, remain valid in $\Og_{X, \overline{x}}.$ Up to ordering, we know that there are $\epsilon_1,..., \epsilon_d\in \Og_{X, \overline{x}}^\times$ such that $x'_j=\epsilon_jx_j$ for $j=1,..., d.$ Hence, in $\Og_{X, \overline{x}},$ we have 
$$f=\tilde{\epsilon}x_1^{\beta_1}\cdot...\cdot x_d^{\beta_d}$$ for some $\tilde{\epsilon}\in \Og_{X, \overline{x}},$ which proves the inclusion "$\subseteq$" from the Lemma. The other inclusion is obvious. The second claim from the Lemma now follows easily from the fact that $\Og_{X, \overline{x}}$ is a unique factorization domain. 
\end{proof}
\begin{proposition}
The morphism $g\colon X'\to Y$ is log smooth. \label{horizontalprop}
\end{proposition}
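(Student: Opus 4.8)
The plan is to verify Kato's chart criterion for log smoothness \'etale locally on $X'$. Since $X$ and $X'$ have the same underlying scheme, fix a geometric point $\overline{x}$ of $X$, with image $\overline{y}=g(\overline{x})$ in $Y$. Only finitely many branches of $D$ pass through $\overline{x}$; let $x_1,\dots,x_d$ be local equations for them, extending to a minimal set of generators of $\mathfrak{m}_{X,\overline{x}}$, and reorder them so that $x_1,\dots,x_a$ cut out the branches contained in $D_{\mathrm{vert}}$ and $x_{a+1},\dots,x_d$ those contained in $D_{\mathrm{hor}}$. Applying Lemma \ref{stalklemma} to $X$ and then to $X'$ (whose boundary $D_{\mathrm{vert}}$ is again a normal crossings divisor on the regular scheme $X$) yields canonical identifications $\mathscr{M}_{X,\overline{x}}/\Og_{X,\overline{x}}^\times\cong\N^d$ and $\mathscr{M}_{X',\overline{x}}/\Og_{X,\overline{x}}^\times\cong\N^a$ which are compatible with the inclusion $\mathscr{M}_{X'}\hookrightarrow\mathscr{M}_X$ and with the inclusion $\N^a\hookrightarrow\N^d$ of the first $a$ coordinates.

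The crucial point is that the induced map on ghost sheaves $\overline{g}^\flat\colon\mathscr{M}_{Y,\overline{y}}/\Og_{Y,\overline{y}}^\times\to\N^d$ takes values in $\N^a$. Indeed, a local section $m$ of $\mathscr{M}_Y$ near $\overline{y}$ is, viewed as a function on $Y$, invertible on $U_Y$, so $g^\flat(m)\in\mathscr{M}_{X,\overline{x}}$ is invertible on $g^{-1}(U_Y)$, which by hypothesis contains $X\setminus D_{\mathrm{vert}}$. By Lemma \ref{stalklemma} we may write $g^\flat(m)=u\cdot x_1^{\beta_1}\cdots x_d^{\beta_d}$ with $u\in\Og_{X,\overline{x}}^\times$ and all $\beta_i\geq0$; the zero locus of this function is then contained in $D_{\mathrm{vert}}$, which forces $\beta_i=0$ whenever $\{x_i=0\}$ is not a component of $D_{\mathrm{vert}}$, i.e.\ whenever $i>a$. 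In particular $g^\flat(\mathscr{M}_{Y,\overline{y}})\subseteq\mathscr{M}_{X',\overline{x}}$, which is the reason $g$ defines a morphism of log schemes $X'\to Y$ in the first place.

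To conclude, I would fix a chart $Q:=\mathscr{M}_{Y,\overline{y}}/\Og_{Y,\overline{y}}^\times\to\mathscr{M}_Y$ near $\overline{y}$ (it exists since $Y$ is fs), and then — using that $g\colon X\to Y$ is log smooth — extend it, \'etale locally near $\overline{x}$, to a chart of the morphism $g\colon X\to Y$, consisting of a monoid homomorphism $h\colon Q\to P$ together with a chart $P\to\mathscr{M}_X$, in which $P$ is fs with $P/P^\times\xrightarrow{\ \sim\ }\N^d$ (so $P\cong\N^d\oplus P^\times$, as $\N^d$ is free) and $P^\times$ the preimage of $\Og_{X,\overline{x}}^\times$, and which satisfies Kato's conditions: $\ker(h^{\mathrm{gp}})$ and $(\operatorname{coker}h^{\mathrm{gp}})_{\mathrm{tors}}$ are finite of order invertible on $X$, and $X\to Y\times_{\Spec\Z[Q]}\Spec\Z[P]$ is smooth. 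Let $P'\subseteq P$ be the submonoid of those elements whose image in $\N^d$ lies in $\N^a$; under the decomposition above $P'\cong\N^a\oplus P^\times$. Then $P'\to\mathscr{M}_X$ factors through $\mathscr{M}_{X'}$ and is, by Lemma \ref{stalklemma} again, a chart of $\mathscr{M}_{X'}$; and by the crucial point $h$ takes values in $P'$, giving a monoid homomorphism $h'\colon Q\to P'$, so that $(h'\colon Q\to P',\ P'\to\mathscr{M}_{X'})$ is a chart of $g\colon X'\to Y$. Kato's conditions are inherited: since $(P')^{\mathrm{gp}}=\Z^a\oplus P^\times\hookrightarrow\Z^d\oplus P^\times=P^{\mathrm{gp}}$ with free cokernel $\Z^{d-a}$, one has $\ker((h')^{\mathrm{gp}})=\ker(h^{\mathrm{gp}})$ and $\operatorname{coker}(h^{\mathrm{gp}})\cong\operatorname{coker}((h')^{\mathrm{gp}})\oplus\Z^{d-a}$; and $\Z[P]$ is a polynomial algebra over $\Z[P']$, so $\Spec\Z[P]\to\Spec\Z[P']$ is smooth, whence the canonical morphism $X'=X\to Y\times_{\Spec\Z[Q]}\Spec\Z[P']$ attached to the new chart is smooth, being the composite of $X\to Y\times_{\Spec\Z[Q]}\Spec\Z[P]$ with a base change of $\Spec\Z[P]\to\Spec\Z[P']$. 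By Kato's criterion, $g\colon X'\to Y$ is log smooth.

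The step I expect to be the main obstacle is the crucial point in the second paragraph: this is exactly where the hypothesis $X\setminus D_{\mathrm{vert}}\subseteq g^{-1}(U_Y)$ is needed, and the argument rests on the explicit description of the stalks $\mathscr{M}_{X,\overline{x}}$ furnished by Lemma \ref{stalklemma}, hence ultimately on the regularity of $X$ and the normal crossings hypothesis. The remaining ingredients — putting the chart of $g\colon X\to Y$ into the normal form used above (a standard output of the theory of charts for log smooth morphisms) and checking that passing to the submonoid $P'$ preserves Kato's two conditions — are routine manipulations of fine saturated monoids.
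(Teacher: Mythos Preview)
Your argument is correct and follows essentially the same route as the paper: verify Kato's chart criterion by producing a chart $Q\to P$ for $g\colon X\to Y$ with $P\cong P'\oplus\N^{\#\text{horizontal}}$, where $P'$ is a chart for $\mathscr{M}_{X'}$, and then observe that this splitting immediately transfers Kato's two conditions from $P$ to $P'$.

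The only substantive difference is one of emphasis. You black-box the existence of a chart $Q\to P$ with $P^\times$ equal to the preimage of $\Og_{X,\overline{x}}^\times$ and $P/P^\times\cong\overline{\mathscr{M}}_{X,\overline{x}}\cong\N^d$, calling it ``a standard output of the theory of charts for log smooth morphisms''; the paper instead devotes its Step~1 to constructing precisely such a $P$ by rerunning the proof of \cite[Theorem 3.5]{Kato} and exploiting that $\overline{\mathscr{M}}_{X,\overline{x}}\cong\N^d$ (which is where regularity and normal crossings enter). Conversely, your ``crucial point'' --- that $g^\flat(\mathscr{M}_{Y,\overline{y}})\subseteq\mathscr{M}_{X',\overline{x}}$, hence $h(Q)\subseteq P'$ --- is argued carefully via Lemma~\ref{stalklemma} and the inclusion $X\setminus D_{\mathrm{vert}}\subseteq g^{-1}(U_Y)$, whereas the paper dispatches the analogous factorisation with a single ``Clearly'' in its Step~2. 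Your final Step~3 computation (freeness of $P^{\mathrm{gp}}/(P')^{\mathrm{gp}}$ and smoothness of $\Spec\Z[P]\to\Spec\Z[P']$) matches the paper's exactly.

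If anything deserves a caveat, it is that the ``neat'' chart you invoke, while available in the literature (and produced explicitly by the paper's Step~1), is not quite as automatic as your phrasing suggests: one must start from an arbitrary chart coming out of Kato's criterion and refine it so that $P/P^\times\xrightarrow{\sim}\overline{\mathscr{M}}_{X,\overline{x}}$, which uses that the latter is free. This is routine for experts but is the step the paper chose to write out in full.
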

\begin{proof}
We shall break up the proof into several steps. Let $x\in X,$ and let $\overline{x}$ be a geometric point of $X$ above $x.$ Because of the local nature of our statement, we may assume that $Y$ is affine and that there is a global chart $Q\to \Gamma(Y, \Og_Y)$ of $\mathscr{M}_Y$ around $g(\overline{x}).$ Our other assumptions on $Y$ allow us also to assume that $Q$ is a finitely generated integral monoid. \\
\it Step 1: \rm \'etale-locally around $\overline{x}$, we shall construct a chart $P\to \mathscr{M}_{X, \overline{x}}$ of $\mathscr{M}_X$ such that $P$ is the direct sum of a finitely generated Abelian group and a finitely generated free monoid, together with an injective chart $Q \to P$ of $g.$ This step is an adaption of the proof of \cite[Theorem 3.5]{Kato}.
Locally around $\overline{x},$ we can choose global sections $t_1,..., t_r$ of $\mathscr{M}_X$ such that $ d\log t_1,..., d\log t_r$ are a basis of $\Omega^1_{X/Y}$ (we use this symbol to stand for log differentials). Exactly as in \it loc. cit., \rm we define a morphism 
$$\N^r\oplus Q\to \mathscr{M}_{X, \overline{x}},$$ given by $Q\to \mathscr{M}_{Y, g(\overline{x})}\to \mathscr{M}_{X, \overline{x}}$ and $e_i\to t_i$ for $i=1,...,r.$ This map gives rise to a homomorphism $\tilde{f}\colon \Z^r\oplus Q\gp\to \mathscr{M}^{\mathrm{gp}}_{X, \overline{x}}.$ The same argument as in \it loc. cit. \rm can be used to show that the cokernel of the induced map 
$f\colon \Z^r\oplus Q\gp \to \mathscr{M}^{\mathrm{gp}}_{X, \overline{x}}/\Og_{X, \overline{x}}^\times$ is finite and has cardinality invertible in $\Og_{X, \overline{x}}.$ By our assumption on $X,$ we know that $\mathscr{M}^{\mathrm{gp}}_{X, \overline{x}}/\Og_{X, \overline{x}}^\times$ is a free Abelian group of rank $m$ for some $m \in \N;$ hence the same is true for $\im f.$ \\
Let $s_1,..., s_m \in \mathscr{M}_{X, \overline{x}}^{\mathrm{gp}}$ be elements such that their images $[s_1],..., [s_m]\in \mathscr{M}^{\mathrm{gp}}_{X, \overline{x}}/\Og_{X, \overline{x}}^\times$ are a basis, and such that $\lambda_1[s_1],..., \lambda_m[s_m]$ are a basis of $\im f$ for suitable $\lambda_1,..., \lambda_m\in \N.$ Observe that all $\lambda_j$ are invertible in $\Og_{X, \overline{x}}.$ Further choose $\alpha_1,..., \alpha_m \in \Z^r\oplus Q\gp$ such that, for all $j=1,...,m,$ we have 
$$f(\alpha_j)=\lambda_j[s_j].$$ The choice of the $\alpha_j$ gives us a section $\im f\to \Z^r\oplus Q\gp,$ and hence also a retraction $\rho\colon \Z^r\oplus Q\gp\to \ker f.$ Combining these, we obtain a decomposition
$$\Z^r\oplus Q\gp=\ker f\oplus \im f.$$ We define 
$$G:=\ker f\oplus \mathscr{M}^{\mathrm{gp}}_{X, \overline{x}}/\Og_{X, \overline{x}}^\times.$$ Observe that $G$ is a finitely generated Abelian group. Our next aim will be to construct a morphism $h\colon G\to \mathscr{M}^{\mathrm{gp}}_{X, \overline{x}}$ extending $\tilde{f},$ such that the composition $G\to \mathscr{M}^{\mathrm{gp}}_{X, \overline{x}} \to  \mathscr{M}^{\mathrm{gp}}_{X, \overline{x}}/\Og_{X, \overline{x}}^\times$ is surjective. To this end, we shall need the following\\
\it Sublemma: \rm The map $[-]\colon \mathscr{M}^{\mathrm{gp}}_{X, \overline{x}}\to \mathscr{M}^{\mathrm{gp}}_{X, \overline{x}}/\Og_{X, \overline{x}}^\times$ admits a section $\kappa$ such that $\tilde{f}=\kappa\circ f$ on the image of the section $\im f\to \Z^r\oplus Q\gp.$ \\
To see this, observe that, for $j=1,..., m,$ we have
\begin{align*}
[\tilde{f}(\alpha_j)]&=f(\alpha_j)\\
&=\lambda_j[s_j]\\
&=[s_j^{\lambda_j}].
\end{align*}
This means that there exist $\epsilon_j\in \Og_{X, \overline{x}}^\times$ such that $\tilde{f}(\alpha_j)=\epsilon_j s_j^{\lambda_j}.$ As the $\lambda_j$ are invertible in $\Og_{X, \overline{x}},$ this means we can write
$$\tilde{f}(\alpha_j)=(\epsilon_j^{1/{\lambda_j}}s_j)^{\lambda_j}$$ for all $j=1,...,m.$ Hence $\kappa([s_j]):=\epsilon_j^{1/\lambda_j} s_j$ gives rise to a section with the desired properties.\\
We can now define $h$ by putting 
$$h((x,y)):=\tilde{f}(x)\kappa(y)$$ for $(x,y)\in G=\ker f\oplus \mathscr{M}^{\mathrm{gp}}_{X, \overline{x}}/\Og_{X, \overline{x}}^\times.$ An easy check shows that $h$ does indeed extend $\tilde{f}$. Furthermore, we observe that $G/(\Z^r\oplus Q\gp)\cong \mathrm{coker}\, f,$ which has cardinality invertible in $\Og_{X, \overline{x}}.$ As in \it loc. cit. \rm we see that the map $h^{-1}(\mathscr{M}_{X, \overline{x}})=:P\to \mathscr{M}_{X, \overline{x}}$ extends to a chart locally around $\overline{x}.$ From now on, we replace $X$ by such a neighbourhood of $\overline{x}.$ Similarly, we see that the map $Q\gp \to G=P\gp$ is injective, that its cokernel's torsion part is of cardinality invertible in $\Og_{X, \overline{x}},$ and that the map
$$X\to Y\times_{\Spec\Z[Q]}\Spec \Z[P]$$ is \'etale in the classical sense.\\
\it Step 2: \rm We shall now go on to constructing a chart $P'$ of $\mathscr{M}_X'$ locally around $\overline{x}$ which we can use to show that $g\colon X'\to Y$ is log smooth.
We can find elements $x_1,..., x_a\in \Og_{X, \overline{x}}$ ($a\in \N$) such that the sequence $x_1,..., x_a$ can be extended to a minimal sequence which generates the maximal ideal $\mathfrak{m}_{X,\overline{x}}\subseteq \Og_{X, \overline{x}},$ and such that $D$ is cut out by $x_1x_2...x_a,$ with $D_{\mathrm{vert}}$ being cut out by $x_1... x_b$ for some $0\leq b\leq a.$ Then $[x_1],..., [x_a]$ are a basis of $\mathscr{M}^{\mathrm{gp}}_{X, \overline{x}}/\Og_{X, \overline{x}}^\times,$ whereas $[x_1],..., [x_b]$ are a basis of $\mathscr{M}'^{\mathrm{gp}}_{X, \overline{x}}/\Og_{X, \overline{x}}^\times.$ Using Lemma \ref{stalklemma} as well as that $\Og_{X, \overline{x}}$ is regular, and hence a unique factorization domain, we see that the preimage of $\mathscr{M}_{X, \overline{x}}/\Og_{X, \overline{x}}^\times$ in $\mathscr{M}_{X, \overline{x}}^{\mathrm{gp}}$ is $\mathscr{M}_{X, \overline{x}}$ (and similarly for $\mathscr{M}'^{\mathrm{gp}}_{X, \overline{x}}$). Analogously, we see that the preimage of $\mathscr{M}'_{X, \overline{x}}/\Og_{X, \overline{x}}^\times$ in $\mathscr{M}_{X, \overline{x}}^{\mathrm{gp}}$ is $\mathscr{M}'_{X, \overline{x}}.$ 
Let $$G':= h^{-1}(\mathscr{M}_{X,\overline{x}}'^{\mathrm{gp}}).$$ Then the induced map $G'\to \mathscr{M}'^{\mathrm{gp}}_{X, \overline{x}}\to\mathscr{M}'^{\mathrm{gp}}_{X, \overline{x}}/\Og_{X, \overline{x}}^\times$ is surjective. If we now define $P'$ to be the preimage of $\mathscr{M}'_{X, \overline{x}}$ in $G'$ under $h,$ then \cite[Lemma 2.10]{Kato} tells us that the map $h\colon P'\to \mathscr{M}'_{X, \overline{x}}$ can be extended to a chart in an \'etale neighbourhood of $\overline{x}.$ Clearly, the map $Q\to P$ from the chart of $g$ constructed in Step 1 factors through $P'.$ We now replace $X$ by such a neighbourhood of $\overline{x}.$\\
\it Step 3: \rm We shall now see that the chart $Q\to P'$ of the morphism $g\colon X'\to Y$ (locally around $\overline{x}$) satisfies part (ii) of \cite[Theorem 3.5]{Kato} as relaxed by Remark 3.6 of \it loc. cit. \rm Indeed, we have
$P=\ker f\oplus \N\langle [x_1],..., [x_a]\rangle,$ and $P'=\ker f\oplus \N\langle [x_1],..., [x_b]\rangle.$ In particular, we have $P\cong P'\oplus \N^{a-b},$ and the morphism $Q\gp\to P\gp$ factors through the canonical inclusion $P'\gp\to P\gp=P'\gp\oplus \Z^{a-b}.$  Hence the torsion parts of the cokernels of $Q\gp\to P'\gp$ and $Q\gp\to P\gp$ are isomorphic, showing part (i) of \cite[ Theorem 3.5]{Kato} for the new log structure. To see part (ii), observe that we have a commutative diagram
$$\begin{tikzcd}
X \arrow{r} \arrow{rd} & Y\times_{\Spec\Z[Q]}\Spec \Z[P] \arrow{r}{\cong}\arrow{d}& (Y\times_{\Spec\Z[Q]}\Spec \Z[P'])\times_{\Spec\Z} \A^{a-b}_{\Z} \arrow{ld}{\pr_1}\\
&Y\times_{\Spec\Z[Q]}\Spec \Z[P']
\end{tikzcd}$$
showing that the map $X\to Y\times_{\Spec\Z[Q]}\Spec \Z[P']$ is smooth. This concludes the proof.
\end{proof}\\

\section{Logarithmic good reduction}
We shall now prove our main theorem, using the results established in the preceding sections. We begin with a technical lemma which will be needed later. 
\begin{lemma}
Let $K$ be a discretely valued field of residue characteristic not equal to 2 or 3, and let $E$ be an elliptic curve defined over $K.$ We shall denote the valuation ring of $K$ by $R,$ and the residue field (which we do not assume to be perfect) by $k.$ Then the finite \'etale $K$-group scheme $E[3]$ is tamely ramified with respect to $R.$ \label{3torslem}
\end{lemma}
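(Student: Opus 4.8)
The plan is to reduce the question of tameness of $E[3]$ to a statement about the reduction type of $E$ and invoke the fact that the residue characteristic is $>3$. First I would note that $E[3]$ is a finite \'etale $K$-group scheme of order $9$, and its ramification is governed by the action of the inertia group $I=\Gal(K\sep/K^{\mathrm{ur}})$ (or, since we only assume $k$ separably closed in the global applications but here $k$ is arbitrary, by the action of $\Gal(K\sep/K)$ itself) on $E[3](K\sep)\cong (\Z/3\Z)^2$. Tameness means the wild inertia subgroup $P$ acts trivially. Since $P$ is a pro-$p$ group with $p=\mathrm{char}\,k\notin\{2,3\}$ (and the case $p=0$ is trivial), any continuous action of $P$ on the finite group $(\Z/3\Z)^2$ of order prime to $p$ factors through a finite quotient whose order is a power of $p$; but $\Aut((\Z/3\Z)^2)=\mathrm{GL}_2(\F_3)$ has order $48=2^4\cdot 3$, which is prime to $p$. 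Hence the image of $P$ in $\mathrm{GL}_2(\F_3)$ is simultaneously a $p$-group and of order prime to $p$, so it is trivial.

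More carefully, I would spell out why $P$ acts through a finite quotient: $E[3](K\sep)$ is a finite discrete set, so the action $P\to\Aut(E[3](K\sep))$ is a continuous homomorphism to a finite group, hence has open kernel, hence finite image; and a finite continuous quotient of the pro-$p$ group $P$ is a finite $p$-group. Combined with the order count $|\mathrm{GL}_2(\F_3)|=48$, which is coprime to every prime $p\ge 5$, the image is trivial. The only subtlety is the edge cases: if $\mathrm{char}\,k=0$ then $P$ is trivial and there is nothing to prove; the hypothesis explicitly excludes $\mathrm{char}\,k\in\{2,3\}$, so no residual prime divides $48$.

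I do not expect any serious obstacle here; the lemma is essentially the standard fact that prime-to-$p$ torsion of an abelian variety (or more specifically $\ell$-torsion for $\ell\ne p$) is tamely ramified, specialised to $\ell=3$. The only thing to be a little careful about is that the paper does not want to assume anything about the reduction type of $E$, so one should avoid phrasing the argument via N\'eron models or the criterion of N\'eron–Ogg–Shafarevich and instead argue directly via the order of the automorphism group, as above. One could alternatively invoke that $E[3]$, being \'etale of order prime to $p$, corresponds to a finite $\Gal(K\sep/K)$-set on which $P$ must act trivially because $|P$-orbits$|$ divides a power of $p$ while also dividing $9$; but the $\mathrm{GL}_2(\F_3)$-order argument is cleanest. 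I would therefore present the proof as: the wild inertia $P$ acts on $E[3](K\sep)\cong(\Z/3\Z)^2$ continuously, hence through a finite $p$-group quotient; this quotient injects into $\mathrm{GL}_2(\F_3)$, which has order $48$ coprime to $p=\mathrm{char}\,k$ (the case $\mathrm{char}\,k=0$ being vacuous); therefore the image is trivial, i.e.\ $E[3]$ is tamely ramified with respect to $R$.
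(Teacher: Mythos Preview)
Your $\mathrm{GL}_2(\F_3)$ argument is correct, but it is genuinely different from the paper's proof. The paper never invokes wild inertia or pro-$p$ structure at all; instead it completes $R$, writes $E[3]\cong\Spec(K\times E_1\times\cdots\times E_n)$ with $\sum[E_j:K]=8$, and uses the involution $[-1]$ to bound each degree: a point $x$ with $\phi(x)\neq x$ pairs off with another point carrying an isomorphic residue field, so $2[E_j:K]\le 8$; a point with $\phi(x)=x$ has a nontrivial order-$2$ automorphism on its residue field, forcing $[E_j:K]$ even. Either way $[E_j:K]\le 8$ with only $2$ and $3$ as possible prime factors, hence the ramification index is prime to $p\ge 5$. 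Your route is shorter and more conceptual, and generalises immediately to $E[\ell]$ for any $\ell$ with $\ell\,|\mathrm{GL}_2(\F_\ell)|$ prime to $p$; the paper's route is more hands-on and avoids any reference to the filtration of the Galois group, which fits its preference for explicit field-degree bookkeeping.

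One small correction: your alternative orbit argument (``$|P\text{-orbit}|$ divides a power of $p$ while also dividing $9$'') does not stand on its own. Orbit sizes need not divide the cardinality of the set; for $p=5$ the partition $9=5+1+1+1+1$ into powers of $5$ is combinatorially possible, and for $p=7$ so is $9=7+1+1$. It is precisely the group-scheme structure, forcing the action through $\Aut((\Z/3\Z)^2)=\mathrm{GL}_2(\F_3)$, that excludes such orbits---so you are right to present the $\mathrm{GL}_2$ version as the actual proof and drop the orbit remark.
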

\begin{proof}
We may replace $R$ by its completion and assume that $R$ is complete. We can find finite separable extensions $E_1,..., E_n$ such that there is an isomorphism
$$E[3]\cong \Spec (K\times E_1\times...\times E_n)$$ as schemes, and we know that
$$1+\sum_{j=1}^n [E_j:K]=\ord_K E[3]=9.$$ Now observe that the $K$-scheme $E[3]$ admits the non-trivial involution $\phi:=[-1].$ Suppose $x$ is a (set-theoretic) point of $E[3]$ with residue field $E_j$ for some $j=1,...,n.$ If $\phi(x)\not=x$, then the residue field at $\phi(x)$ is $E_i$ for some $i\not=j.$ In particular, we obtain an isomorphism $E_j\to E_i$ of extensions of $K,$ which implies $[E_j:K]\leq 4.$ In particular, the degree of the induced extension of residue fields must divide 3 or 4, as must the ramification index. This implies that $E_j$ is a tamely ramified extension of $K.$ On the other hand, suppose that $\phi(x)=x.$ Then $\phi^\ast$ induces a non-trivial $K$-automorphism of $E_j$ of order 2. Indeed, if the automorphism of $E_j$ induced by $\phi$ were trivial, the $K\sep$-points of $E_j$ would be $K\sep$-points of $E[3]\backslash \{0\}$ fixed by $[-1],$ which is impossible since $E[3](K\sep)\cong \Z/3\Z\times \Z/3\Z.$ If $E_j^\phi$ denotes the field of elements of $E_j$ fixed by $\phi^\ast,$ then $E_j^\phi\subseteq E_j$ is a Galois extension of degree 2, so $[E_j:K]$ must be even. Since the only prime numbers which divide even positive integers not exceeding 8 are 2 and 3, we conclude as in the previous case.
\end{proof}\\
We are now ready to prove the main theorem. Recall that $K$ is a complete discretely valued field with residue field $k,$ which we assume to be algebraically closed and of characteristic different from 2 and 3. As before, we choose an algebraic closure $K\alg$ of $K$ and let $K\sep$ be the separable closure of $K$ in $K\alg.$ Further recall that $f\colon X\to C$ is a minimal elliptic surface over $K$ which admits a section $\sigma\colon C\to X.$  We let $D$ be the formal sum of closed points of $C$ above which the fibre of $f$ is singular. Then $D$ is a divisor on $C$ which is \'etale and tamely ramified over $K$ (see Theorem \ref{Newtamethm} and Proposition \ref{separableprop}), provided that Condition 1 or Condition 2 are satisfied. 
\begin{theorem}
Assume that the minimal elliptic surface $X$ satisfies Condition 1 or Condition 2 above. Suppose further that there is a divisor $A$ on $C$ which is \'etale and tamely ramified over $K$ such that $$\deg_K D+\deg_KA+ 2\dim_K H^1(C, \Og_C) > 2,$$ and such that the supports of $D$ and $A$ do not intersect. Then there exists a regular scheme $\mathscr{X}$ and a projective morphism $\mathscr{X}\to \Spec \Og_K$ with smooth generic fibre, such that\\
(i) the morphism $\mathscr{X}\to \Spec \Og_K$ is log smooth with respect to the log structures defined by the special fibres, and \\
(ii) there exists a projective modification $\mu\colon\mathscr{X}_K\to X.$\\
Moreover, the morphism $\mathscr{X}\to \Spec \Og_K$ factors as $\mathscr{X}\overset{\phi}{\to}\mathscr{C}\to \Spec \Og_K$, where $\mathscr{C}$ is a regular and projective model of $C$, log smooth over $\Og_K$ with respect to the special fibres, such that the diagram \label{mainthm}
$$\begin{tikzcd}
\mathscr{X}_K \arrow{r}{\mu} \arrow{rd}[swap]{\phi_K} & X \arrow{d}{f} \\
&\mathscr{C}_K =&\!\!\!\!\!\!\!\!\!\!\!\!\!\!\!\!\!\!\!\!\!\!\!\!\! C
\end{tikzcd}$$
commutes. The exceptional locus of $\mu$ is mapped into the support of $D$ by $f.$ 
\end{theorem}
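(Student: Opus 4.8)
The plan is to build $\mathscr{X}$ in three steps: first a log smooth model $\mathscr{C}$ of the base curve $C$, then a log smooth model $\mathscr{Y}$ of a modification of $X$ lying over $\mathscr{C}$ and carrying, for the moment, an extra ``horizontal'' piece of log structure coming from $D$, and finally the removal of that horizontal part by means of Proposition \ref{horizontalprop}. The cohomological hypotheses enter only at the start, to guarantee tameness. First I would note that the Galois representation $H^1(C_{K\alg},\Q_\ell)$ is tamely ramified: under Condition 1 the big fibration $f'\colon X'\to C$ acquires a section (the rational section induced by $\sigma$ via the birational equivalence over $C$ extends, $C$ being a regular curve and $f'$ proper), so $H^1(C_{K\alg},\Q_\ell)$ is a $\Gal(K\sep/K)$-equivariant direct summand of the tame module $H^1(X'_{K\alg},\Q_\ell)$; under Condition 2 it is literally a summand of the representation assumed to be tame. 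Together with Theorem \ref{Newtamethm} and Proposition \ref{separableprop} (which say that $D$, like $A$, is \'etale and tamely ramified over $K$) this places us in a purely tame situation.

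For the base, set $Z:=D+A$, an \'etale and tamely ramified divisor on $C$ whose support consists of $n:=\deg_K Z$ geometric points. The numerical hypothesis says exactly that $2g-2+n>0$, where $g=\dim_K H^1(C,\Og_C)$, i.e.\ that $(C,Z)$ is a \emph{stable} $n$-marked curve over $K$; since the monodromy of $C$ and the Galois action on $\mathrm{supp}(Z)_{K\alg}$ are both tame, the marked curve $(C,Z)$ has tame monodromy. By the theory of (tame) stable reduction of marked curves --- equivalently, by extending the classifying morphism $\Spec K\to\overline{\mathcal{M}}_{g,n}$ over $\Spec\Og_K$, possible by properness at worst after a tame base change which can then be descended --- one obtains a regular projective model $\mathscr{C}$ of $C$ over $\Og_K$, log smooth for the special-fibre log structure, in which $\overline{D}$ and $\overline{A}$ are disjoint, avoid $\mathrm{Sing}(\mathscr{C}_k)$, and $\overline{D}\cup\mathscr{C}_k$ is a normal crossings divisor (the tame ramification of a point of $D$ being recorded in the multiplicity of the component of $\mathscr{C}_k$ it meets). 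This is the only point at which $A$ and the numerical hypothesis are used; for $g\geq 2$ they are automatic with $A=0$.

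For the fibration, note that over $U:=C\setminus\mathrm{supp}(D)$ the morphism $f$ restricts to a smooth elliptic fibration with section, whose monodromy around each point of $D$ is tame because $\mathrm{char}\,k>3$ (inertia acts on the Tate module of an elliptic curve over a complete local field of residue characteristic $>3$ either unipotently or through a finite quotient of order in $\{2,3,4,6\}$, hence prime to $p$), while its ``vertical'' monodromy is tame by the input of the first paragraph applied to $\R^1f_*\Q_\ell$ (and, under Condition 2, to the sheaf figuring there). Hence, after a finite tame extension $L/K$ and a connected finite tame cover $\tilde C\to C_L$ ramified only over $Z_L$, the pulled-back fibration $\tilde f\colon\tilde X\to\tilde C$ has semistable reduction at every point of $\tilde C$ and over $\Og_L$, and its marked structure on $\tilde C$ is still stable; classical semistable reduction for elliptic curves (Kodaira--N\'eron, Deligne--Rapoport, Tate's algorithm for fibres of type $I_n$, together with good reduction) then produces a regular projective, strictly semistable model $\widetilde{\mathscr{Y}}\to\widetilde{\mathscr{C}}$ over $\Og_L$ with $\widetilde{\mathscr{Y}}_L\to\tilde X$ a modification over $\tilde C$. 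Passing to the Galois closure, $\tilde C\to C$ is Galois with group $G$ of order prime to $p$, acting on $\widetilde{\mathscr{Y}}$ and $\widetilde{\mathscr{C}}$ over $\Spec\Og_K$; forming quotients and resolving (using that quotients by tame group actions and log blow-ups preserve log smoothness, and that $\widetilde{\mathscr{C}}/G$ recovers the $\mathscr{C}$ of the previous paragraph) yields a regular projective $\mathscr{Y}\to\mathscr{C}$ over $\Og_K$ which restricts to a modification of $X\to C$ on generic fibres, is smooth over $\mathscr{C}\setminus\overline{D}$, and is log smooth over $\Og_K$ for the log structure attached to $\mathscr{Y}_k$ together with the reduced preimage of $\overline{D}$.

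Finally, Proposition \ref{horizontalprop}, applied to $\mathscr{Y}\to\Spec\Og_K$ with $D_{\mathrm{hor}}$ the reduced preimage of $\overline{D}$ and $D_{\mathrm{vert}}=\mathscr{Y}_k$, shows that $\mathscr{Y}\to\Spec\Og_K$ remains log smooth after we keep only the special-fibre log structure; set $\mathscr{X}:=\mathscr{Y}$ with this log structure and $\phi:=(\mathscr{Y}\to\mathscr{C})$, which gives (i), and the same proposition applied to $\mathscr{C}$ (with $D_{\mathrm{hor}}=\overline{D}$) gives the log smoothness of $\mathscr{C}\to\Spec\Og_K$ for the special-fibre log structure. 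The generic fibre $\mathscr{X}_K$, being the locus where this log structure is trivial, is smooth over $K$; for each closed point $x\in C$ the localisation $\mathscr{X}_K\times_C\Spec\Og_{C,x}$ is a proper regular model of $X_\eta$, so the universal property of the minimal proper regular model $X\times_C\Spec\Og_{C,x}$ recalled after Definition \ref{mindef} gives a unique morphism over $\Spec\Og_{C,x}$, and these glue to a projective modification $\mu\colon\mathscr{X}_K\to X$ over $C$, whence the commuting triangle. Over $U$ both $\mathscr{X}_K\times_C\Spec\Og_{C,x}$ and $X\times_C\Spec\Og_{C,x}$ are smooth proper models of $X_\eta$, hence canonically the N\'eron model, so $\mu$ is an isomorphism there and $f$ carries the exceptional locus of $\mu$ into $\mathrm{supp}(D)$. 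The main obstacle is the construction of $\mathscr{Y}$ in the third paragraph: it is there that the cohomological Conditions 1 and 2 must be turned, via the local theory of elliptic curves in residue characteristic $>3$, into tameness of all the monodromies involved, and there that the descent from the tame cover down to $\Og_K$ --- keeping the total space regular and the log structure controlled --- must be carried out.
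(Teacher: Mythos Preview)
Your three-step architecture (build a log smooth model $\mathscr{C}$ of the base, build a log smooth model of the fibration over it, then strip the horizontal log structure via Proposition \ref{horizontalprop}) is exactly the paper's architecture, and your opening paragraph on tameness of $H^1(C_{K\alg},\Q_\ell)$ matches the paper. The difference is entirely in \emph{how} the two models are produced.

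The paper does not construct anything by hand. It invokes Saito's theorem on log smooth extension of families of curves \cite[Theorem 1]{Saito} twice. For the base: tameness of $H^1(C_{K\alg},\Z/\ell^n\Z)$ together with the tame \'etale divisor $A+D$ and the numerical inequality give precisely Saito's hypothesis $(3)$ with $N=\ell^n$, yielding $\mathscr{C}$ directly. For the fibration: one views $X_{U_{\mathscr{C}}}\to U_{\mathscr{C}}$ as a family of genus-one curves over the base $\mathscr{C}$, equipped with the degree-one tame \'etale divisor $\sigma$; Saito's hypothesis $(3)$ with $N=3$ is then verified by Lemma \ref{3torslem}, which says that $E[3]$ is \emph{automatically} tame over any DVR of residue characteristic $\not\in\{2,3\}$. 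This is the paper's key observation for the second step: the cohomological Conditions 1 and 2 are used only for the base curve, and the fibration step requires nothing beyond $\mathrm{char}\,k>3$. Saito's theorem then outputs $\mathscr{X}\to\mathscr{C}$ regular, projective, log smooth, with no tame covers, quotients, or resolutions needed.

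Your route through $\overline{\mathcal{M}}_{g,n}$ for the base is a reasonable substitute for the first application of Saito, but your fibration step is where the argument is incomplete, as you yourself flag. Passing to a tame cover $\tilde C\to C_L$, building a semistable model upstairs, and then ``forming quotients and resolving'' is exactly the kind of thing Saito's theorem packages; doing it by hand requires controlling the fixed loci of the $G$-action on $\widetilde{\mathscr{Y}}$, showing the quotient has at worst toric singularities resolvable by log blow-ups, and checking that $\widetilde{\mathscr{C}}/G$ really agrees with your previously constructed $\mathscr{C}$. None of this is carried out, and the assertion that ``quotients by tame group actions preserve log smoothness'' is not true without further hypotheses on the action. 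The paper's use of Lemma \ref{3torslem} plus Saito bypasses all of this: there is no cover, no quotient, and no resolution.
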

\begin{proof}
We shall begin by constructing a projective and regular model $\mathscr{C}\to \Spec \Og_K$ of $C,$ flat over $\Og_K,$ with the following property: Let $U_{\mathscr{C}}$ be the complement of the subscheme $D\subseteq C.$ Then the reduced complement of $U_{\mathscr{C}}$ in $\mathscr{C}$ is a divisor with normal crossings, and the morphism $\mathscr{C}\to \Spec \Og_K$ is log smooth with respect to the log structure on $\mathscr{C}$ induced by $U_{\mathscr{C}}$ 
(unless mentioned otherwise, the log structure on $\Spec \Og_K$ will always be the one induced by the special point). Let $n>1$ and let $U_{\mathscr{C}}'$ be the complement of $A+D$ in $C$, viewed as an open subscheme of $\mathscr{C}.$ If Condition 1 holds, then because $H^1(C_{K\alg}, \Z_\ell)\subseteq H^1(C_{K\alg}, \Q_\ell)\subseteq H^1(X'_{K\alg}, \Q_\ell),$ we see that $H^1(C_{K\alg}, \Z_\ell)$ is tamely ramified. If Condition 2 holds, the same conclusion follows immediately. 
This implies that $H^1(C_{K\alg}, \Z/\ell^n\Z)\cong \Pic^0_{C_{K\alg}/K\alg}[\ell^n](K\alg)$ is tamely ramified as well. Furthermore, $A+D$ is \'etale and tamely ramified over $K$ (Theorem \ref{Newtamethm}). Hence \cite[Theorem 1]{Saito}, implication $(3)\Rightarrow (1)$ (with $N=\ell ^n$) tells us that there exists a model $\mathscr{C}$ of $C$ which is projective, regular, and flat over $\Og_K$ which is furthermore log smooth with respect to the log structure induced by $U'_{\mathscr{C}}.$ However, Proposition \ref{horizontalprop} now tells us that we my replace $U_{\mathscr{C}}'$ by $U_{\mathscr{C}}$ without affecting the conclusion.\\
The map $f$ induces an elliptic curve $X_{U_{\mathscr{C}}}\to U_{\mathscr{C}},$ and the section $\sigma$ of $f$ induces a divisor on $X_{U_{\mathscr{C}}}$ which is obviously \'etale, tamely ramified, and of degree 1 over $K.$ Hence \cite[Theorem 1]{Saito}, implication $(3)\Rightarrow (1)$ (with $N=3$) together with Lemma \ref{3torslem} tell us that we can extend $X_{U_{\mathscr{C}}}\to U_{\mathscr{C}}$ to a regular, projective and flat scheme $\mathscr{X}\to \mathscr{C},$ such that, with respect to the log structure on $\mathscr{X}$ induced by the complement of the section $\sigma$ on $X_{U_{\mathscr{C}}},$ the morphism $\mathscr{X}\to\mathscr{C}$ is log smooth. Clearly, $\mathscr{C}_K\to C$ is a proper regular model of the generic fibre of $f,$ so there exists a unique modification $\mu\colon \mathscr{X}_K\to X$ compatible with the morphisms down to $C.$ It is also clear that the map $\mathscr{X}\to \Spec \Og_K$ is log smooth with respect to the log structure on $\mathscr{X}$ induced by $U_{\mathscr{X}}.$ By Proposition \ref{horizontalprop} we may replace $U_{\mathscr{X}}$ by the generic fibre $\mathscr{X}_K$ of $\mathscr{X}\to \Spec \Og_K$ without affecting the logarithmic smoothness of the morphism, thus concluding the proof. 
\end{proof}\\
In some situations, the conclusions of the last Theorem can be strengthened further:
\begin{theorem} 
Let $X\to C$ be an elliptic surface such that $f$ is smooth in the classical sense (which implies in particular that the elliptic surface is minimal). Suppose the Galois representation on $H^1(X_{K\alg}, \F_\ell)$ is tamely ramified and that there exists an effective divisor $A$ on $C$ which is \'etale and tamely ramified and such that 
 $$\deg A+2\dim_K H^1(C, \Og_C)> 2.$$ Then the statement of Theorem \ref{mainthm} can be modified to include the requirement that $\mu$ be an isomorphism. In particular, $X$ has logarithmic good reduction. \label{smooththm}
\end{theorem}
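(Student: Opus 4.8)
The plan is to reduce the statement to the proof of Theorem \ref{mainthm}. Since $f$ is smooth in the classical sense it has no singular fibres, so the divisor $D$ (the formal sum of the closed points of $C$ above which the fibre of $f$ is singular) is zero. Hence the hypothesis $\deg A + 2\dim_K H^1(C,\Og_C)>2$ imposed here is exactly the numerical hypothesis of Theorem \ref{mainthm} in the case $D=0$, and moreover $X=X_C$ is already a smooth elliptic fibration over the whole of $C$, carrying the section $\sigma$. Inspecting the proof of Theorem \ref{mainthm}, the only ingredient that is not automatic in the present situation is the tame ramification of $H^1(C_{K\alg},\Z/\ell^n\Z)$ --- this is precisely what Condition 1 or Condition 2 is used to supply there --- so the first task is to deduce this from the sole hypothesis that $H^1(X_{K\alg},\F_\ell)$ be tamely ramified.

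Because $f$ admits the section $\sigma$, the pullback $f^\ast\colon H^1(C_{K\alg},\F_\ell)\to H^1(X_{K\alg},\F_\ell)$ is injective with $\Gal(K\sep/K)$-equivariant splitting $\sigma^\ast$; hence $H^1(C_{K\alg},\F_\ell)$ is a subrepresentation of a tamely ramified representation, and so is itself tamely ramified. Equivalently (the two differ by the tamely ramified cyclotomic character), $J[\ell]$ is tame, where $J:=\Pic^0_{C/K}$; that is, the wild inertia subgroup $P$ acts trivially on $J[\ell]$. Fix $n\geq 1$ and let $\Phi$ denote the image of $P$ in $\Aut(J[\ell^n](K\alg))\cong \mathrm{GL}_{2g}(\Z/\ell^n\Z)$, where $2g=2\dim_K H^1(C,\Og_C)$. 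As a continuous image of the pro-$p$ group $P$, the finite group $\Phi$ is a $p$-group; and since the natural surjection $J[\ell^n]\to J[\ell]$ identifies $J[\ell]$ with $J[\ell^n]\otimes_{\Z/\ell^n\Z}\Z/\ell\Z$ equivariantly, the triviality of the $P$-action on $J[\ell]$ forces $\Phi$ into the kernel of $\mathrm{GL}_{2g}(\Z/\ell^n\Z)\to\mathrm{GL}_{2g}(\F_\ell)$, which has order a power of $\ell$. Being simultaneously a $p$-group and of $\ell$-power order with $p\neq\ell$, the group $\Phi$ is trivial. Therefore $H^1(C_{K\alg},\Z/\ell^n\Z)\cong J[\ell^n](K\alg)$ is tamely ramified for every $n\geq 1$ (up to the harmless tame twist).

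Now I would reproduce the proof of Theorem \ref{mainthm}. Fix $n>1$, so that $\ell^n\geq 3$. By \cite[Theorem 1]{Saito}, implication $(3)\Rightarrow(1)$ with $N=\ell^n$, applied to $C$ with the \'etale, tamely ramified divisor $A$ (recall $D=0$), there is a projective, regular, flat model $\mathscr{C}\to\Spec\Og_K$ of $C$ that is log smooth for the log structure induced by the complement of $A$; by Proposition \ref{horizontalprop} one may discard the horizontal boundary, and as $D=0$ the resulting log structure on $\mathscr{C}$ is exactly that of its special fibre, and $U_{\mathscr{C}}=C$. Applying \cite[Theorem 1]{Saito} once more with $N=3$, together with Lemma \ref{3torslem}, to the smooth elliptic fibration $X_{U_{\mathscr{C}}}=X\to C=U_{\mathscr{C}}$ with its section $\sigma$, one obtains a projective, regular, flat $\mathscr{X}\to\mathscr{C}$, log smooth over $\mathscr{C}$, extending $X\to C$. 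Because $U_{\mathscr{C}}=C$, the generic fibre of $\mathscr{X}\to\mathscr{C}$ is $X\to C$ itself, so the modification $\mu\colon\mathscr{X}_K\to X$ produced by Theorem \ref{mainthm} is an isomorphism; a final application of Proposition \ref{horizontalprop} replaces $U_{\mathscr{X}}$ by $\mathscr{X}_K$, so that $\mathscr{X}\to\Spec\Og_K$ is log smooth for the log structure defined by its special fibre. This yields all the conclusions of Theorem \ref{mainthm} with $\mu$ an isomorphism; in particular $X$ has logarithmic good reduction. The only step demanding genuine work beyond transcription is the tameness bootstrap of the second paragraph: the subtlety is that the hypothesis is stated with $\F_\ell$-coefficients, so --- unlike in the proof of Theorem \ref{mainthm} --- one cannot simply invoke ``a Galois-stable lattice in a tame representation is tame'', and the pro-$p$-versus-$\ell$-group count on $\mathrm{GL}_{2g}(\Z/\ell^n\Z)$ is what repairs this. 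I do not anticipate difficulty elsewhere.
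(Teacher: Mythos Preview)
Your proof is correct and follows essentially the same route as the paper: apply \cite[Theorem 1]{Saito} to $(C,A)$ to obtain $\mathscr{C}$, then again (with $N=3$ and Lemma \ref{3torslem}) to the smooth fibration $X\to C=U_{\mathscr{C}}$ to obtain $\mathscr{X}$, and clean up with Proposition \ref{horizontalprop}. The paper's own proof simply asserts that the hypothesis on $H^1(X_{K\alg},\F_\ell)$ implies tameness of $H^1(C_{K\alg},\Z/\ell^n\Z)$ without justification; your pro-$p$-versus-$\ell$-power argument on $\ker\big(\mathrm{GL}_{2g}(\Z/\ell^n\Z)\to\mathrm{GL}_{2g}(\F_\ell)\big)$ supplies exactly the missing step, and is the standard way to do it.
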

\begin{proof}
Our assumptions imply that for $n>1,$ the Galois representation on $$H^1(C_{K\alg}, \Z/\ell^n\Z)\cong \Pic^0_{C_{K\alg}/K\alg}[\ell^n](K\alg)$$ is tamely ramified (note that the condition from the Theorem follows from Condition 1). As in the proof of Theorem \ref{mainthm} we deduce that there exists a proper, flat, and regular model $\mathscr{C}\to \Spec\Og_K$ which is log smooth with respect to the log structure given by the special fibre. Using \cite[Theorem 1]{Saito}, implication $(3) \Rightarrow (1)$ again, we see that $X\to C$ can be extended to a log smooth and projective map $\mathscr{X}\to \mathscr{C},$ and the resulting morphism $\mathscr{X}\to \Spec \Og_K$ is clearly log smooth. Now we use Proposition \ref{horizontalprop} as above. 
\end{proof}

\end{document}